\renewcommand{\mathcal}{\mathscr}
\newcommand{\cL}{\mathcal{L}}
\newcommand{\cT}{\mathcal{T}}
\newcommand{\cX}{\mathcal{X}}
\newcommand {\IN}{\mathbb{N}}
\newcommand {\IR}{\mathbb{R}}
\newcommand{\s}{\sigma}
\renewcommand{\P}{\mathrm{P}}
\newcommand {\E}{{\mathrm E}}
\newcommand{\dimh}{\dim_{_{\rm H}}}
\newcommand{\R}{\mathbb{R}}
\newcommand{\e}{\epsilon}
\newtheorem{stat}{Statement}[section]
\newtheorem{prop}[stat]{Proposition}
\newtheorem{cor}[stat]{Corollary}
\newtheorem{thm}[stat]{Theorem}
\newtheorem{lem}[stat]{Lemma}
\theoremstyle{definition} \newtheorem{rema}[stat]{Remark}
\newtheorem{defi}[stat]{Definition}
\newtheorem{nota}[stat]{Notation}
\numberwithin{equation}{section}
\begin{document}
\title{\bf Hitting probabilities for systems of 
	non-linear stochastic heat equations with multiplicative noise}
\author{Robert C. Dalang$^{1,4}$, Davar Khoshnevisan$^{2,5}$, and Eulalia Nualart$^3$}
\date{}

\footnotetext[1]{Institut de Math\'ematiques, Ecole Polytechnique
	F\'ed\'erale de Lausanne, Station 8, CH-1015 
	Lausanne, Switzerland. \texttt{robert.dalang@epfl.ch}}

\footnotetext[2]{Department of Mathematics, The 
	University of Utah, 155 S. 1400 E. Salt Lake City, 
	UT 84112-0090, USA. \texttt{davar@math.utah.edu}}

\footnotetext[3]{Institut Galil\'ee, Universit\'e
	Paris 13, 93430 Villetaneuse, France.
	\texttt{nualart@math.univ-paris13.fr}}
	
\footnotetext[4]{Supported in part by the Swiss National Foundation for Scientific Research.}%

\footnotetext[5]{Research supported in part by a grant
	from the US National Science Foundation.}%

	\maketitle

\begin{abstract}
We consider a system of $d$ non-linear stochastic heat equations in spatial
dimension $1$ driven by $d$-dimensional space-time white noise. The non-linearities appear
both as additive drift terms and as multipliers of the noise. Using techniques of
Malliavin calculus, we establish upper and lower bounds on the one-point density of the
solution $u(t,x)$, and upper bounds of Gaussian-type on the two-point density of
$(u(s,y),u(t,x))$. In particular, this estimate quantifies how this density degenerates as
$(s,y) \to (t,x)$. From these results, we deduce upper and lower bounds on hitting
probabilities of the process $\{u(t\,,x)\}_{t \in \mathbb{R}_+, x \in [0,1]}$, in terms of respectively
Hausdorff measure and Newtonian capacity. These estimates make it possible to show that
points are polar when $d \geq 7$ and are not polar when $d \leq 5$. We also show that the
Hausdorff dimension of the range of the process is $6$ when $d>6$, and give analogous results
for the processes $t \mapsto u(t,x)$ and $x \mapsto u(t,x)$. Finally, we obtain the values of
the Hausdorff dimensions of the level sets of these processes.
\end{abstract}

\vskip 3cm {\it \noindent AMS 2000 subject classifications:}
Primary: 60H15, 60J45; Secondary: 60H07, 60G60. \\[1cm] \noindent {\it Key words
and phrases}. Hitting probabilities, stochastic heat equation, space-time white
noise, Malliavin calculus. \vfill\pagebreak

\section{Introduction and main results}

Consider the following system of non-linear stochastic partial differential equations ({\em spde}'s) 
\begin{equation}\label{e1}
    \frac{\partial u_i}{\partial t}(t\,,x) =
    \frac{\partial^2 u_i}{\partial x^2}(t\,,x)
    + \sum_{j=1}^d \s_{i,j}(u(t\,,x)) \dot{W}^j(t\,,x) +
    b_i(u(t\,,x)),
\end{equation}
for $1\le i\le d$, $t\in[0\,,T]$, and $x\in[0\,,1]$, where ${u}:=(u_1\,,\ldots,u_d)$, with initial conditions ${u}(0\,,x)={0}$ for all $x\in[0\,,1]$, and Neumann boundary conditions
\begin{equation}\label{neumann}
    \frac{\partial u_i}{\partial x}(t\,,0)=
    \frac{\partial u_i}{\partial x}(t\,,1) =0,
    \qquad 0\le t\le T.
\end{equation}
Here,
$\dot{W}:=(\dot{W}^1\,,\ldots,\dot{W}^d)$ is a vector of $d$ independent space-time white noises
on $[0\,,T]\times[0\,,1]$. For all $1\le i, j\le d$, $b_i, \sigma_{ij} : \mathbb{R}^d \rightarrow \mathbb{R}$ are globally Lipschitz functions. We set $b=(b_i)$, $\sigma = (\sigma_{ij})$. Equation (\ref{e1}) is formal: the rigorous formulation of \textsc{Walsh} \cite{Walsh:86} will be recalled in Section \ref{sec2}.

The objective of this paper is to develop a potential theory for the $\IR^d$-valued process $u=(u(t,x), \ t \geq 0, \ x \in (0,1))$.
In particular, given $A \subset \IR^d$, we want to determine whether the process $u$ visits (or hits) $A$ with positive probability.

The only potential-theoretic result that we are aware of for
systems of {\it non-linear} spde's with multiplicative noise
($\sigma$ non-constant) is \textsc{Dalang and Nualart} \cite{Dalang:04}, who study the case of
the reduced hyperbolic spde~on $\IR_+^2$ (essentially
equivalent to the wave equation in spatial dimension $1$):
\begin{equation*}
\frac{\partial^2 X^i_t}{\partial t_1 \partial t_2} =\sum_{j=1}^d
\s_{i,j}(X_{t})\frac{\partial^2 W_t^j}{\partial t_1 \partial
t_2} +b_i(X_{t}),
\end{equation*}
where $t=(t_1 \,,t_2) \in \mathbb{R}_+^2$, and $X_{t}^i=0$ if $t_1
t_2=0$, for all $1 \leq i \leq d$. There, Dalang and Nualart used
Malliavin calculus to show that the solution $(X_t)$ of this
spde~satisfies
\begin{equation*}
K^{-1} \text{Cap}_{d-4} (A) \leq \P \{\exists t \in [a,b]^2 : X_t \in A\} \leq K \text{Cap}_{d-4} (A),
\end{equation*}
where $\text{Cap}_\beta$ denotes the capacity with respect to the Newtonian $\beta$-kernel ${\rm K}_\beta(\cdot)$ (see (\ref{k})). This result, particularly the upper bound, relies heavily on properties of the underlying two-parameter filtration and uses Cairoli's maximal inequality for two-parameter processes.

    Hitting probabilities for systems of linear heat equations have been obtained in \textsc{Mueller and Tribe} \cite{Mueller:03}. For systems of non-linear stochastic heat equations with {\em additive noise,} that is, $\s$ in \eqref{e1} is a constant matrix, so \eqref{e1} becomes
\begin{equation}\label{eadd}
   \frac{\partial u_i}{\partial t}(t\,,x) =
    \frac{\partial^2 u_i}{\partial x^2}(t\,,x)
    + \sum_{j=1}^d \s_{i,j}\, \dot{W}^j(t\,,x) +
    b_i(u(t\,,x)),
\end{equation}
estimates on hitting probabilities have been obtained in \textsc{Dalang, Khoshnevisan and Nualart} \cite{Dalang:05}. That paper develops some general results that lead to upper and lower bounds on hitting probabilities for continuous two-parameter random fields, and then uses these, together with a careful analysis of the linear equation ($b \equiv 0$, $\s \equiv I_d$, where $I_d$ denotes the $d\times d$ identity matrix) and Girsanov's theorem, to deduce bounds on hitting probabilities for the solution to \eqref{eadd}.

   In this paper, we make use of the general results of \cite{Dalang:05}, but then, in order to handle the solution of \eqref{e1}, we use a very different approach. Indeed, the results of \cite{Dalang:05} require in particular information about the probability density function $p_{t,x}$ of the random vector $u(t,x)$. In the case of multiplicative noise, estimates on $p_{t,x}$ can be obtained via Malliavin calculus.


   We refer in particular on the results of \textsc{Bally and Pardoux} \cite{Bally:98}, who used Malliavin calculus in the case $d=1$ to prove that for any $t > 0$, $k \in \IN$ and $0 \leq x_1 < \cdots < x_k \leq 1,$ the law of $(u(t \,, x_1), \ldots, u(t \,, x_k))$ is absolutely continuous with respect to Lebesgue measure, with a smooth and strictly positive density on $\{\sigma \not= 0\}^k$, provided $\sigma$ and $b$ are infinitely differentiable functions which are bounded together with their derivatives of all orders. A Gaussian-type lower bound for this density is established by \textsc{Kohatsu-Higa} \cite{Kohatsu:03} under a uniform ellipticity condition. \textsc{Morien} \cite{Morien:99} showed that the density function is also H\"{o}lder-continuous as a function of $(t \,,x).$

In this paper, we shall use techniques of Malliavin calculus to establish the following theorem. Let $p_{t,x}(z)$ denote the probability density function of the $\IR^d$-valued random vector $u(t \,,x) = (u_1(t \,,x), \ldots, u_d(t\,,x))$ and for $(s,y) \not= (t,x)$, let $p_{s,y; \, t,x}(z_1,z_2)$ denote the joint density of the $\IR^{2d}$-valued random vector
\begin{equation}\label{vu}
(u(s \, ,y), u(t \, ,x)) = (u_1(s \, ,y), \ldots, u_d(s\, ,y), u_1(t \, ,x), \ldots, u_d(t \, ,x))
\end{equation}
(the existence of $p_{t,x}(\cdot)$ is essentially a consequence of the result of \textsc{Bally and Pardoux} \cite{Bally:98}, see our Corollary \ref{cor4.3}; the existence of $p_{s,y; \, t,x}(\cdot, \cdot)$ is a consequence of Theorems \ref{3t1} and \ref{ga}).
\vskip 12pt
Consider the following two hypotheses on the coefficients of the system (\ref{e1}):
\begin{itemize}
\item[{\bf P1}] The functions $\sigma_{ij}$ and $b_i$
are bounded and infinitely differentiable with bounded
partial derivatives of all orders, for $1 \leq i,j \leq d$.
\item[{\bf P2}] The matrix $\sigma$ is uniformly elliptic, that is,
$\Vert \sigma(x) \xi \Vert^2 \geq \rho^2 >0$ for some $\rho>0$,
for all $x \in \mathbb{R}^d$, $\xi \in \mathbb{R}^d$, $\Vert \xi \Vert=1$ ($\Vert \cdot \Vert$ denotes the Euclidean norm on $\IR^d$).
\end{itemize}

\begin{thm} \label{t1}
Assume {\bf P1} and {\bf P2}. Fix $T>0$ and let $I \subset (0,T]$ and $J \subset (0,1)$ be two compact nonrandom intervals.
\begin{itemize}
\item[\textnormal{(a)}] The density $p_{t,x}(z)$ is uniformly bounded
over $z \in \IR^d$, $t\in I$ and $x \in J$.

\item[\textnormal{(b)}] There exists $c>0$ such that for any $t\in I$, $x \in J$ and $z  \in \mathbb{R}^d$,
\begin{equation*}
p_{t,x}(z) \geq c t^{-d/4} \exp\biggl(-\frac{\Vert z \Vert^2}{c t^{1/2}} \biggr).
\end{equation*}
\item[\textnormal{(c)}] For all $\eta>0$,
there exists $c>0$ such that for any $s,t \in I$, $x,y \in J$, $(s,y) \neq (t,x)$ and $z_1,z_2 \in \mathbb{R}^d$,
\begin{equation}\label{psytx}
p_{s,y;\,t,x}(z_1,z_2) \leq c (|t-s|^{1/2}+|x-y|)^{-(d+\eta)/2} \exp
\biggl( -\frac{\Vert z_1-z_2 \Vert^2}{c
(|t-s|^{1/2}+|x-y|)}\biggr).
\end{equation}
\item[\textnormal{(d)}] There exists $c>0$ such that for any $t\in I$, $x,y \in J$, $x \neq y$ and $z_1,z_2 \in \mathbb{R}^d$,
\begin{equation*}
p_{t,y;\,t,x}(z_1,z_2) \leq c (|x-y|)^{-d/2} \exp
\biggl( -\frac{\Vert z_1-z_2 \Vert^2}{c |x-y|}\biggr).
\end{equation*}
\end{itemize}
\end{thm}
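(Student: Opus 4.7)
The plan is to derive all four bounds via Malliavin calculus on the Wiener space associated to $\dot W$, combined with integration-by-parts formulas and the mild formulation of (\ref{e1}). Hypothesis \textbf{P1} guarantees that $u_i(t,x) \in \mathbb{D}^\infty$ with Malliavin derivatives whose $L^p$-norms are bounded uniformly on $I \times J$, while \textbf{P2}, together with the Bally--Pardoux-type analysis of \cite{Bally:98}, yields uniform inverse moments of the determinant of the Malliavin matrix $\gamma_{u(t,x)}$. The classical integration-by-parts formula
$$ p_{t,x}(z) = \E\bigl[\1_{\{u(t,x) > z\}}\, H_{(1,\ldots,d)}(u(t,x), 1)\bigr] $$
then yields (a) by putting absolute values inside. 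For (b), I would adapt the Gaussian lower bound of Kohatsu-Higa \cite{Kohatsu:03} under the uniform ellipticity \textbf{P2}: the scaling $t^{-d/4}$ reflects the parabolic scaling $u(t,x)\sim t^{1/4} N$ for a non-degenerate $\IR^d$-valued random vector $N$, after checking that the relevant Malliavin-matrix bounds in \cite{Kohatsu:03} hold uniformly on $I\times J$.

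For the two-point bounds (c) and (d), set $\Delta := |t-s|^{1/2}+|x-y|$ and $V := (u(s,y), u(t,x))$. The key estimate is that for every $p \geq 1$ and every $\eta > 0$,
$$ \E\bigl[(\det \gamma_V)^{-p}\bigr]^{1/p} \leq C_{p,\eta}\, \Delta^{-d-\eta}. $$
The $2\times 2$ block structure of $\gamma_V$ has first diagonal block $\gamma_{u(s,y)}$, which is uniformly non-degenerate by the argument used for (a). Its Schur complement is, heuristically, the Malliavin matrix of the increment $u(t,x) - u(s,y)$ conditional on the noise up to time $s$, and under \textbf{P2} this should be of order $\Delta$ in every direction, because the conditional variance of each component of the increment is of this order. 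Integrating by parts against $\1_{\{u(s,y)>z_1\}}\1_{\{u(t,x)>z_2\}}$ then produces the $\Delta^{-(d+\eta)/2}$ prefactor of (c); for (d) the single-time localization removes the $\eta$-loss and gives the sharp exponent $d/2$. The Gaussian factor $\exp(-\Vert z_1-z_2\Vert^2/(c\Delta))$ is obtained as in \cite{Dalang:05}: one combines the sub-Gaussian increment estimate
$$ \E\bigl[\exp\bigl(\lambda\Vert u(t,x) - u(s,y)\Vert^2/\Delta\bigr)\bigr] \leq C,\qquad \lambda < \lambda_0, $$
with an exponential Chebyshev argument applied to the event $\{\Vert u(t,x) - u(s,y) - (z_2-z_1)\Vert \leq \e\}$ at scale $\e\sim\Delta^{1/2}$.

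The main obstacle is the Malliavin non-degeneracy at scale $\Delta$. One has to prove, with strong enough inverse moments, that the smallest eigenvalue of $\gamma_V$ dominates $c\Delta$; equivalently, that the conditional Malliavin matrix of $u(t,x)$ given the $\s$-field generated by the noise up to time $s$ dominates $c\Delta I_d$. The natural decomposition separates two contributions: the noise on $(s,t]\times[0,1]$, which via a parabolic time-scaling argument produces the $|t-s|^{1/2}$ part of $\Delta$, and the noise on $[0,s]$ restricted to a spatial strip of width $|x-y|$ around $\{y,x\}$, which produces the $|x-y|$ part. The Neumann heat kernel asymptotics and the uniform ellipticity \textbf{P2} must be combined in both regimes; the small $\eta$-loss in (c) stems from overlapping contributions in the intermediate regime where $|t-s|^{1/2}$ and $|x-y|$ are comparable, whereas for (d) the equal-time setting permits a clean spatial localization without loss.
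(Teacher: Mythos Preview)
Your outline for (a) and (b) is correct and matches the paper. For (c) and (d), however, there are two substantive gaps.

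First, the determinant bound $\E[(\det\gamma_V)^{-p}]^{1/p}\le C\Delta^{-d-\eta}$ alone does not yield the prefactor $\Delta^{-(d+\eta)/2}$. The density is bounded by $\|H_{(1,\dots,2d)}(\,\cdot\,,1)\|_{0,2}$, an iterated Skorohod integral whose estimation requires control of the \emph{individual entries} of the inverse Malliavin matrix, not merely its determinant. The paper works with $Z=(u(s,y),\,u(t,x)-u(s,y))$ rather than your $V$, proves (Theorem~\ref{ga}) that the entries of $\gamma_Z^{-1}$ are of order $1$, $\Delta^{-1/2}$, or $\Delta^{-1}$ depending on the $d\times d$ block, and crucially pairs these in the iteration with the increment estimate $\|D^m(u(t,x)-u(s,y))\|_{L^p(\Omega;\mathcal H^{\otimes m})}\lesssim\Delta^{1/2}$ (Proposition~\ref{3p1}). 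It is precisely this cancellation that produces the exponent $-(d+\eta)/2$; in $V$-coordinates no such cancellation is visible, since all $\|DV^j\|$ are of order $1$ and every entry of $\gamma_V^{-1}$ can be as bad as $\Delta^{-1}$.

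Second, your identification of the Schur complement with ``the Malliavin matrix of the increment conditional on the noise up to time $s$'' is incorrect: the Schur complement of $\gamma_{u(s,y)}$ in $\gamma_V$ is $\gamma_{u(t,x)}-\gamma_V^{(3)}\gamma_{u(s,y)}^{-1}\gamma_V^{(2)}$, which corresponds to projecting $Du(t,x)$ onto the orthogonal complement of $\mathrm{span}\{Du_j(s,y)\}$ in $\mathcal H$, not onto $L^2((s,T]\times[0,1])$. Bounding this below by $c\Delta I_d$ is essentially the paper's small-eigenvalue estimate (Proposition~\ref{pr:small-eigen:1}) and needs the same work; moreover, to reach $\det\gamma_Z\gtrsim\Delta^d$ rather than $\Delta^{2d}$ one must also show that $d$ eigenvalues remain of order $1$ (Proposition~\ref{large}), a separate and nontrivial perturbation argument your outline omits. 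Two smaller points: the $\eta$-loss in (c) arises from the nonlinear remainders $a_i$ in (\ref{a}) (it disappears when $\sigma,b$ are constant), not from an ``intermediate regime'' of comparable scales; and the sub-Gaussian tail bound is not immediate when $b\not\equiv 0$---the paper removes the drift via Girsanov's theorem before applying the exponential martingale inequality.
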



   The main technical effort in this paper is to obtain the upper bound in (c). Indeed, it is not difficult to check that for fixed $(s,y;\,t,x)$, $(z_1, z_2) \mapsto p_{s,y;\,t,x} (z_1, z_2)$ behaves like a Gaussian density function. However, for $(s, y) = (t,x)$, the $\IR^{2d}$-valued random vector $(u(s \,,y), u(t\,,x))$  is concentrated on a $d$-dimensional subspace in $\IR^{2d}$ and therefore does not have a density with respect to Lebesgue measure in $\IR^{2d}$. So the main effort is to estimate how this density blows up as $(s,y) \to (t,x)$. This is achieved by a detailed analysis of the behavior of the Malliavin matrix of $(u(s\,,y), u(t\,,x))$ as a function of $(s,y; t, x)$, using a perturbation argument. The presence of $\eta$ in statement (c) may be due to the method of proof. When $t=s$, it is possible to set $\eta=0$ as in Theorem \ref{t1}(d).

   This paper is organized as follows. After introducing some notation and stating our main results on hitting probabilities (Theorems \ref{t2} and \ref{t4}), we assume Theorem \ref{t1} and use the theorems of \cite{Dalang:05} to prove these results in Section \ref{sec2}. In Section \ref{sec3}, we recall some basic facts of Malliavin calculus and state and prove two results that are tailored to our needs (Propositions \ref{normHc} and \ref{deter}). In Section \ref{sec4}, we establish the existence, smoothness and uniform boundedness of the one-point density function $p_{t,x}$, proving Theorem \ref{t1}(a). In Section \ref{sec5}, we establish a lower bound on $p_{t,x}$, which proves Theorem \ref{t1}(b). This upper (respectively lower) bound is a fairly direct extension to $d \geq 1$ of a result of \textsc{Bally and Pardoux} \cite{Bally:98} (respectively \textsc{Kohatsu-Higa} \cite{Kohatsu:03}) when $d=1$. In Section \ref{sec6}, we establish Theorem \ref{t1}(c) and (d). The main steps are as follows.

   The upper bound on the two-point density function $p_{s,y;\,t,x}$ involves a bloc-decomposition of the Malliavin matrix of the $\R^{2d}$-valued random vector $(u(s,y),\, u(s,y) - u(t,x))$. The entries of this matrix are of different orders of magnitude, depending on which bloc they are in: see Theorem \ref{ga}. Assuming Theorem \ref{ga}, we prove Theorem \ref{t1}(c) and (d) in Section \ref{sec63}. The exponential factor in \eqref{psytx} is obtained from an exponential martingale inequality, while the factor $(|t-s|^{1/2}+|x-y|)^{-(d+\eta)/2}$ comes from an estimate of the iterated Skorohod integrals that appear in Corollary \ref{no} and from the block structure of the Malliavin matrix. 

   The proof of Theorem \ref{ga} is presented in Section \ref{p63}: this is the main technical effort in this paper. We need bounds on the inverse of the Malliavin matrix. Bounds on its cofactors are given in Proposition \ref{3p2}, while bounds on negative moments of its determinant are given in Proposition \ref{3p3}. The determinant is equal to the product of the $2d$ eigenvalues of the Malliavin matrix. It turns out that at least $d$ of these eigenvalues are of order $1$ (``large eigenvalues") and do not contribute to the upper bound in \eqref{psytx}, and at most $d$ are of the same order as the smallest eigenvalue (``small eigenvalues"), that is, of order $|t-s|^{1/2}+|x-y|$. If we did not distinguish between these two types of eigenvalues, but estimated all of them by the smallest eigenvalue, we would obtain a factor of $(|t-s|^{1/2}+|x-y|)^{-d+\eta/2}$ in \eqref{psytx}, which would {\em not} be the correct order. The estimates on the smallest eigenvalue are obtained by refining a technique that appears in \cite{Bally:98}; indeed, we obtain a precise estimate on the density whereas they only showed existence. The study of the large eigenvalues does not seem to appear elsewhere in the literature.


Coming back to potential theory, let us introduce some notation. For all Borel sets $F\subset \R^d$,
we define $\mathcal{P}(F)$ to be the set of all probability measures
with compact support contained in $F$.
For all integers $k\ge 1$ and
$\mu\in \mathcal{P} (\R^k)$, we let $I_\beta(\mu)$ denote the
\emph{$\beta$-dimensional energy} of $\mu$, that is,
\begin{equation*}
    I_\beta(\mu)  := \iint {\rm K}_\beta(\|x-y\|)\, \mu(dx)\,\mu(dy),
\end{equation*}
where $\|x\|$ denotes the Euclidian norm of $x \in \R^k$,
\begin{equation} \label{k}
	{\rm K}_\beta(r) := 
	\begin{cases}
		r^{-\beta}&\text{if $\beta >0$},\\
		\log  ( N_0/r ) &\text{if $\beta =0$},\\
		1&\text{if $\beta<0$},
    \end{cases}
\end{equation}
and $N_0$ is a sufficiently large constant (see \textsc{Dalang, Khoshnevisan, and Nualart} \cite[(1.5)]{Dalang:05}.

For all $\beta\in\R$, integers $k\ge 1$, and Borel sets $F\subset\R^k$, $\text{Cap}_\beta(F)$ denotes the \emph{$\beta$-dimensional capacity of
$F$}, that is,
\begin{equation*}
    \text{Cap}_\beta(F) := \left[ \inf_{\mu\in\mathcal{P}(F)}
    I_\beta(\mu) \right]^{-1},
\end{equation*}
where $1/\infty:=0$. Note that if $\beta <0$, then $\text{Cap}_\beta(\cdot) \equiv 1$.

Given $\beta\geq 0$, the $\beta $-dimensional \emph{Hausdorff measure}
of $F$ is defined by
\begin{equation}\label{eq:HausdorfMeasure}
	{\mathcal{H}}_ \beta (F)= \lim_{\epsilon \rightarrow 0^+} \inf
	\left\{ \sum_{i=1}^{\infty} (2r_i)^ \beta : F \subseteq
	\bigcup_{i=1}^{\infty} B(x_i\,, r_i), \ \sup_{i\ge 1} r_i \leq
	\epsilon \right\},
\end{equation}
where $B(x\,,r)$ denotes the open (Euclidean) ball of radius
$r>0$ centered at $x\in \R^d$. When $\beta <0$, we define 
$\mathcal{H}_\beta (F)$ to be infinite.

Throughout, we consider the following \emph{parabolic metric}:
For all $s,t\in[0\,,T]$ and $x,y\in[0\,,1]$,
\begin{equation}\label{eq:Delta}
    {\bf \Delta}((t\,,x)\,; (s\,,y)) :=
    |t-s|^{1/2} + |x-y|.
\end{equation}
Clearly, this is a metric on $\R^2$ which
generates the usual Euclidean topology on $\R^2$.
Then we obtain an energy form
\begin{equation*}
    I_\beta^{{\bf \Delta}} (\mu) := \iint {\rm K}_\beta(
    {\bf \Delta}((t\,,x)\,; (s\,,y)))
   \,\mu(dt\,dx)\,\mu(ds\,dy),
\end{equation*}
and a corresponding capacity
\begin{equation*}
    \text{Cap}^{\bf \Delta}_\beta (F) := \left[
    \inf_{\mu\in\mathcal{P}(F)} I_\beta^{ \bf \Delta}(\mu)
    \right]^{-1}.
\end{equation*}
For the Hausdorff measure, we write
\begin{equation*}
	{\mathcal{H}}^{\bf \Delta}_{\beta} (F)= \lim_{\epsilon \rightarrow 0^+} \inf
	\biggl\{ \sum_{i=1}^{\infty} (2r_i)^{\beta} : F \subseteq
	\bigcup_{i=1}^{\infty} B^{\bf \Delta}((t_i\,,x_i)\,, r_i), \ 
	\sup_{i\ge 1} r_i \leq
	\epsilon \biggr\},
\end{equation*}
where $B^{\bf\Delta}((t\,,x)\,,r)$ denotes the open
$\bf\Delta$-ball of radius $r>0$ centered at $(t\,,x)\in[0\,,T]\times[0\,,1].$
\vskip 12pt

Using Theorem \ref{t1} together with results from \textsc{Dalang, Khoshnevisan, and Nualart} \cite{Dalang:05}, we shall prove the following result. Let $u(E)$ denote the (random) range of $E$ under the map $(t,x) \mapsto u(t,x)$, where $E$ is some Borel-measurable subset of $\R^2$.

\begin{thm} \label{t2}
Assume {\bf P1} and {\bf P2}. Fix $T>0$, $M>0$, and $\eta>0$. Let $I \subset (0,T]$ and $J \subset (0,1)$ be two fixed non-trivial compact intervals.
\begin{itemize}
\item[\textnormal{(a)}] There exists $c>0$ depending on $M, I, J$ and $\eta$ such that for all compact sets $A \subseteq [-M, M]^d$,
\begin{equation*}
c^{-1} \,  \textnormal{Cap}_{d-6+\eta}(A) \leq \P \{ u(I \times J) \cap A \neq
\emptyset \}  \leq c \, \mathcal{H}_{d-6-\eta}(A).
\end{equation*}
\item[\textnormal{(b)}]  For all $t\in (0,T]$, 
there exists $c_1>0$ depending on $T$, $M$ and $J$, and $c_2>0$ depending on $T$, $M$, $J$ and $\eta>0$ such that for all compact sets $A \subseteq [-M, M]^d$,
\begin{equation*}
c_1 \, \textnormal{Cap}_{d-2}(A) \leq \P \{ u(\{t\} \times J) \cap A \neq \emptyset \} \leq c_2 \, \mathcal{H}_{d-2-\eta}(A).
\end{equation*}
\item[\textnormal{(c)}]  For all $x\in (0,1)$, there exists $c>0$ depending on $M, I$ and $\eta$ such that for all compact sets $A \subseteq [-M, M]^d$,
\begin{equation*}
c^{-1} \, \textnormal{Cap}_{d-4+\eta}(A) \leq \P \{ u(I \times\{x\}) \cap A \neq \emptyset \} \leq c \,  \mathcal{H}_{d-4-\eta}(A).
\end{equation*}
\end{itemize}
\end{thm}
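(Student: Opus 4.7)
Given Theorem \ref{t1}, we obtain Theorem \ref{t2} by invoking the abstract hitting-probability framework of \cite{Dalang:05}, which turns one- and two-point density bounds, together with moment estimates on the increments of a random field, into Newtonian-capacity lower bounds and Hausdorff-measure upper bounds on hitting probabilities.

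The preliminary ingredient is a regularity estimate: using the mild (Walsh) form of \eqref{e1}, the Burkholder--Davis--Gundy inequality, and standard $L^2$-bounds on increments of the Neumann heat kernel, one has, for every $p \geq 2$, a constant $c_p$ such that
\[
\E\bigl[\Vert u(t,x) - u(s,y)\Vert^p\bigr]^{1/p} \leq c_p \,{\bf \Delta}((t,x)\,;(s,y))^{1/2}
\]
for all $(s,y),(t,x) \in I \times J$. This is the natural $L^p$-H\"older estimate of order $1/2$ in the parabolic metric ${\bf \Delta}$, and is exactly the type of regularity input that the theorems of \cite{Dalang:05} require.

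For part (a), a ${\bf \Delta}$-ball of radius $r$ in $I\times J$ has two-dimensional Lebesgue measure of order $r^3$ (a factor $r^2$ in time and $r$ in space); combined with the $1/2$-H\"older exponent, this yields an effective ``codimension'' of $6 = 2\cdot 3$. Plugging Theorem \ref{t1}(a),(c) together with the regularity estimate into the upper-bound theorem of \cite{Dalang:05} gives $\P\{u(I\times J)\cap A \neq \emptyset\} \leq c\,\mathcal{H}_{d-6-\eta}(A)$; feeding Theorem \ref{t1}(a),(b),(c) and the regularity estimate into the lower-bound theorem gives the matching $\textnormal{Cap}_{d-6+\eta}(A)$ bound. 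The $\eta$-losses on both sides are inherited directly from Theorem \ref{t1}(c).

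Parts (b) and (c) follow by applying the same scheme to the one-parameter fields $x \mapsto u(t,x)$ on $\{t\}\times J$ and $t \mapsto u(t,x)$ on $I\times\{x\}$. On the time-slice $\{t\}\times J$ we have ${\bf \Delta}=|x-y|$, and the effective exponent is $2$; crucially, Theorem \ref{t1}(d) supplies an $\eta$-free two-point estimate here, so the capacity lower bound is sharp at $\textnormal{Cap}_{d-2}(A)$, while the Hausdorff upper bound retains the $\eta$ inherent to the abstract estimate. On the space-slice $I\times\{x\}$ we have ${\bf \Delta}=|t-s|^{1/2}$, ${\bf \Delta}$-balls have Lebesgue measure of order $r^2$, the effective exponent is $4$, and since two distinct times appear we must use Theorem \ref{t1}(c), so both bounds carry $\eta$. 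The main conceptual work is already encoded in Theorem \ref{t1}; this section reduces to a careful specialization of the results of \cite{Dalang:05} to the parabolic metric, and the only delicate point is the bookkeeping that identifies when Theorem \ref{t1}(c) may be replaced by the sharper Theorem \ref{t1}(d), responsible for the absence of $\eta$ in the lower bound of part (b).
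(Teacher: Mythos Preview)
Your proposal is correct and follows essentially the same route as the paper: reduce to the abstract hitting-probability theorems of \cite{Dalang:05}, feeding in the density estimates of Theorem~\ref{t1} and the $L^p$-H\"older bound \eqref{holder}, and keep track of when Theorem~\ref{t1}(d) (equal times) replaces Theorem~\ref{t1}(c) so as to remove the $\eta$ in the lower bound of part~(b).

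One small inaccuracy in your bookkeeping: for the \emph{upper} bounds, the paper invokes \cite[Theorem~3.3]{Dalang:05}, whose hypotheses are only (i) the uniform one-point density bound (Theorem~\ref{t1}(a)) and (ii) the H\"older estimate \eqref{holder}; the two-point bound Theorem~\ref{t1}(c) is \emph{not} used there. The two-point estimate enters only through hypothesis~{\bf A2} of \cite[Theorem~2.1]{Dalang:05} for the \emph{lower} bounds. Your heuristic about ${\bf\Delta}$-ball volumes giving the exponents $6$, $2$, $4$ is a helpful gloss, but the actual values come out of the specific parameters in the cited theorems rather than needing to be recomputed here.
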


\begin{rema} 

\begin{enumerate}

\item[\textnormal{(i)}] Because of the inequalities between capacity and Hausdorff measure, the right-hand sides of Theorem \ref{t2} can be replaced by $c \, \textnormal{Cap}_{d-6-\eta}(A)$, $c \, \textnormal{Cap}_{d-2-\eta}(A)$
and $c \, \textnormal{Cap}_{d-4-\eta}(A)$ in \textnormal{(a)}, \textnormal{(b)} and \textnormal{(c)}, respectively (cf. \textsc{Kahane} \cite[p. 133]{Kahane:85}).

\item[\textnormal{(ii)}] Theorem~\ref{t2} also holds if we consider Dirichlet boundary conditions
(i.e. $u_i(t,0)=u_i(t,1)=0$, for $t \in [0,T]$) instead of Neumann boundary conditions.

\item[\textnormal{(iii)}] In the upper bounds of Theorem~\ref{t2}, the condition in {\bf P1}  that $\sigma$ and $b$
are bounded can be removed, but their derivatives of all orders must exist and be bounded.
\end{enumerate}
\end{rema}

As a consequence of Theorem~\ref{t2}, we deduce the following result on the polarity of points. Recall that a Borel set $A\subseteq\R^d$
is called \emph{polar} for $u$ if $\P\{u((0,T] \times (0,1))\cap A\neq\varnothing\}=0$;
otherwise, $A$ is called \emph{nonpolar}.

\begin{cor} \label{c3}
Assume {\bf P1} and {\bf P2}.
\begin{itemize}
\item[\textnormal{(a)}] Singletons are nonpolar for $(t,x) \mapsto u(t,x)$ when $d \leq 5$, and are polar when $d \geq 7$ (the case $d=6$ is open).
\item[\textnormal{(b)}] Fix $t \in (0,T]$. Singletons are nonpolar for $x \mapsto u(t,x)$ when $d=1$, and are polar when $d \geq 3$ (the case $d=2$ is open).
\item[\textnormal{(c)}]  Fix $x \in (0,1)$. Singletons are not polar for $t \mapsto u(t,x)$ when $d \leq 3$ and are polar when $d \geq 5$ (the case $d=4$ is open).
\end{itemize}
\end{cor}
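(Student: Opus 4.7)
The plan is to apply Theorem~\ref{t2} with $A$ equal to the singleton $\{z\}$ for an arbitrary $z \in \IR^d$; choosing $M>0$ so that $z \in [-M,M]^d$ makes the theorem available. The only inputs needed are the elementary facts that $\textnormal{Cap}_\beta(\{z\})$ equals $1$ when $\beta < 0$ and $0$ when $\beta \geq 0$ (the latter because $K_\beta(0) = +\infty$ for $\beta \geq 0$), and that $\mathcal{H}_\beta(\{z\})$ equals $0$ when $\beta > 0$ and $+\infty$ when $\beta < 0$ (from the convention on $\mathcal{H}_\beta$ for negative $\beta$ and the trivial covering by one ball for positive $\beta$).

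For part (a), to prove nonpolarity when $d \leq 5$, choose $\eta \in (0, 6-d)$ so that $d-6+\eta<0$ and hence $\textnormal{Cap}_{d-6+\eta}(\{z\}) = 1$. The lower bound in Theorem~\ref{t2}(a) then gives $\P\{u(I\times J) \ni z\} \geq c^{-1} > 0$ for any fixed compact $I \subset (0,T]$ and $J \subset (0,1)$, so $\{z\}$ is nonpolar. To prove polarity when $d \geq 7$, choose $\eta \in (0, d-6)$ so that $d-6-\eta>0$ and $\mathcal{H}_{d-6-\eta}(\{z\}) = 0$; the upper bound then gives $\P\{u(I\times J) \ni z\} = 0$ for each such $I$ and $J$. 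Exhausting $(0,T]\times(0,1)$ by the increasing sequence of rectangles $[1/n, T]\times[1/n, 1-1/n]$ and applying countable subadditivity yields $\P\{u((0,T]\times(0,1)) \ni z\} = 0$, i.e.\ $\{z\}$ is polar.

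Parts (b) and (c) follow by exactly the same argument applied to Theorem~\ref{t2}(b) and (c), with $d-2$ and $d-4$ replacing $d-6$; for the polarity half one exhausts $\{t\}\times(0,1)$ by $\{t\}\times[1/n, 1-1/n]$ in (b), and $(0,T]\times\{x\}$ by $[1/n, T]\times\{x\}$ in (c). Note that in (b) the lower bound carries no $\eta$, so the case $d=1$ immediately gives $\textnormal{Cap}_{-1}(\{z\}) = 1$ and hence nonpolarity without having to choose any $\eta$. There is no real obstacle in the deduction; the only reason the borderline dimensions $d=6$ in (a), $d=2$ in (b), and $d=4$ in (c) remain open is precisely the $\eta$-gap between the index $d-\alpha+\eta$ in the lower bound and the index $d-\alpha-\eta$ in the upper bound of Theorem~\ref{t2}, which prevents us from comparing capacity and Hausdorff measure at a common critical index.
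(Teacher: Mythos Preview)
Your proof is correct and follows essentially the same approach as the paper's: choose $\eta$ small so that the relevant capacity/Hausdorff index lands on the appropriate side of zero, and invoke Theorem~\ref{t2}. The only difference is that you spell out the exhaustion of $(0,T]\times(0,1)$ by compact rectangles to pass from the compact-interval statement of Theorem~\ref{t2} to polarity on the full domain, a detail the paper leaves implicit.
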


Another consequence of Theorem~\ref{t2} is the Hausdorff dimension of the range
of the process $u$.
\begin{cor} \label{c3bis}
Assume {\bf P1} and {\bf P2}.
\begin{itemize}
\item[\textnormal{(a)}] If $d>6$, then 
$\dimh   (u((0,T] \times (0,1))) = 6$ a.s.
\item[\textnormal{(b)}] Fix $t \in \mathbb{R}_+$. If $d>2$, then 
$\dimh   (u(\{t\} \times (0,1))) = 2$ a.s.
\item[\textnormal{(c)}] Fix $x \in (0,1)$. If $d>4$, then 
$\dimh  (u(\mathbb{R}_+ \times \{x\})) = 4$ a.s.
\end{itemize}
\end{cor}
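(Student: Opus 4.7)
The plan is to match upper and lower bounds on $\dimh u(E)$ by applying Theorem~\ref{t2} to two extreme families of test sets, and then to extend to the unbounded indexing sets of Corollary~\ref{c3bis} by a countable exhaustion. For brevity let $k$ denote the target dimension ($6$, $2$, or $4$ in cases (a), (b), (c) respectively) and let $E$ be the corresponding compact index set from Theorem~\ref{t2}.

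For the upper bound $\dimh u(E)\le k$, I would apply the Hausdorff-measure inequality in Theorem~\ref{t2} to a Euclidean ball $A = B(z,r)\subset[-M,M]^d$. Since $\mathcal{H}_\beta(B(z,r))\le c\,r^\beta$, this yields
\[
\P\{u(E)\cap B(z,r)\neq\varnothing\}\le c\,r^{d-k-\eta}.
\]
A standard first-moment covering argument---tile $[-M,M]^d$ with $O(r^{-d})$ balls of radius $r$, use linearity of expectation to bound the expected number of balls that meet $u(E)$, and apply Borel--Cantelli along $r=2^{-n}$---then shows that the $\gamma$-Hausdorff measure of $u(E)\cap[-M,M]^d$ vanishes a.s.\ for every $\gamma>k+\eta$. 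Because $u$ admits an a.s.\ continuous, hence locally bounded, version (from the moment bounds that underpin Theorem~\ref{t1}), letting $\eta\downarrow 0$ along a countable sequence and $M\uparrow\infty$ gives $\dimh u(E)\le k$ a.s.

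For the lower bound $\dimh u(E)\ge k$, I would use the capacity inequality in Theorem~\ref{t2}. For any $\gamma<k$ choose $\eta>0$ so that $\gamma<k-\eta$; Frostman's lemma produces a compact $A\subset\R^d$ with $\dimh A>d-k+\eta$, so that $\textnormal{Cap}_{d-k+\eta}(A)>0$ and Theorem~\ref{t2} yields $\P\{u(E)\cap A\neq\varnothing\}>0$. A standard codimension argument for random closed sets in $\R^d$---intersecting $u(E)$ with an independent fractal percolation limit set of dimension slightly above $k-\eta$, in the spirit of the arguments used in \cite{Dalang:05}---then upgrades this into the pathwise assertion $\dimh u(E)\ge k-\eta$ a.s.; sending $\eta\downarrow 0$ completes the lower bound.

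Finally, I would pass from the compact index sets $E$ of Theorem~\ref{t2} to the full ranges appearing in Corollary~\ref{c3bis} via countable increasing exhaustions---for instance $(0,T]\times(0,1)=\bigcup_{n\ge 1}[1/n,T]\times[1/n,1-1/n]$ in (a) and $\mathbb{R}_+\times\{x\}=\bigcup_{n\ge 1}[1/n,n]\times\{x\}$ in (c)---and invoke $\dimh\bigl(\bigcup_n B_n\bigr)=\sup_n\dimh B_n$ for countable unions. I expect the most delicate point to be the codimension reduction in the lower bound: it relies both on having an a.s.\ continuous version of $u$ (so that $u(E)$ is genuinely an a.s.\ random closed set) and on uniformity in $I$, $J$, $t$, $x$ of the constants in Theorem~\ref{t2}, both of which are already in place via the estimates underlying Theorem~\ref{t1}.
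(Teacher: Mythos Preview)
Your upper bound via first-moment covering is correct and standard. The paper, however, takes a shorter route for the whole corollary: it reads off from Theorem~\ref{t2} that $\textnormal{codim}(u(E))$ equals $(d-6)^+$, $(d-2)^+$, $(d-4)^+$ in the three cases, and then applies the general codimension identity $\dimh E + \textnormal{codim}\, E = n$ a.s.\ on $\{E\neq\varnothing\}$ from \textsc{Khoshnevisan}~\cite[Chap.~11, Thm.~4.7.1]{Khoshnevisan:02}. This single citation replaces both your covering argument and your fractal-percolation step.

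Your lower-bound sketch has a concrete slip and a genuine gap. The slip: to probe whether $\dimh u(E)\ge k-\eta$ via an independent random test set, the test set should have dimension slightly \emph{above} $d-(k-\eta)=d-k+\eta$ (so that its codimension is slightly below $k-\eta$), not ``slightly above $k-\eta$'' as you wrote. The gap: even with the correct dimension, showing that $\P\{u(E)\cap A\neq\varnothing\}>0$ for every compact $A$ with $\dimh A>d-k+\eta$ only pins down $\textnormal{codim}(u(E))$; you have not explained how intersecting with an independent percolation set upgrades this to the \emph{almost-sure} inequality $\dimh u(E)\ge k-\eta$. That upgrade is exactly the nontrivial content of the codimension theorem (it relies on the self-similarity/zero--one structure of fractal percolation, not just on a single positive-probability hit), and you would effectively be re-proving Khoshnevisan's theorem. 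It is cleaner---and what the paper does---to cite that theorem directly once you have computed the codimension from Theorem~\ref{t2}. Your countable-exhaustion remark at the end is correct and fills in a step the paper leaves implicit.
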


As in \textsc{Dalang, Khoshnevisan, and Nualart} \cite{Dalang:05}, it is also possible to use Theorem~\ref{t1} to obtain results concerning level sets of $u$. 
Define
\begin{equation*}\begin{split}
    \mathcal{L}(z\,;u) &:= \left\{ (t\,,x)\in I \times
        J:\ u(t\,,x) = z\right\},\\
    \mathcal{T}(z\,;u) &= \left\{ t\in I:\ u(t\,,x)=z
        \text{ for some }x\in J \right\},\\
    \mathcal{X}(z\,;u) &= \left\{ x\in J:\ u(t\,,x)=z
        \text{ for some }t\in I\right\},\\
    \mathcal{L}_{x}(z\,;u) &:= \left\{ t\in I:\
        u(t\,,x)=z\right\},\\
    \mathcal{L}^{t}(z\,;u) &:= \left\{ x\in J:\
        u(t\,,x)=z\right\}.
\end{split}\end{equation*}
We note that $\mathcal{L}(z\,;u)$ is the level set of $u$ at level $z$, $\mathcal{T}(z\,;u)$ (resp.~$\mathcal{X}(z\,;u)$) 
is the projection of $\mathcal{L}(z\,;u)$ onto $I$ (resp.~$J$), and $\mathcal{L}_{x}(z\,;u)$ (resp.~$\mathcal{L}^{t}(z\,;u)$)
 is the $x$-section (resp.~$t$-section) of $\mathcal{L}(z\,;u)$.

\begin{thm} \label{t4}
  Assume {\bf P1} and {\bf P2}. Then for all $\eta>0$ and $R>0$
    there exists a positive and finite constant $c$ such that
    the following holds for all compact sets
    $E\subset (0,T] \times (0,1)$,
    $F\subset (0,T]$,
    $G\subset (0,1)$, and for all $z\in B(0\,,R)$:
    \begin{enumerate}
        \item[\textnormal{(a)}]
            $c^{-1} \,  \textnormal{Cap}_{(d+\eta)/2}^{{\bf \Delta}}
			(E) \le \P\{ \mathcal{L}(z\,;u) \cap E
			\neq\varnothing \}\le c \,\mathcal{H}^{{\bf \Delta}}_{(d-\eta)/2}(E)
			$;
        \item[\textnormal{(b)}]
            $c^{-1} \textnormal{Cap}_{(d-2+\eta)/4}(F) \le \P\{ \mathcal{T}(z\,;u)\cap F\neq\varnothing\}\le c \,\mathcal{H}_{(d-2-\eta)/4}(F)
			$;
        \item[\textnormal{(c)}]
            $ c^{-1} \, \textnormal{Cap}_{(d-4+\eta)/2} (G)\le \P\{\mathcal{X}(z\,;u)\cap G\neq\varnothing\}\le c \,\mathcal{H}_{(d-4-\eta)/2}(G)
			$;
        \item[\textnormal{(d)}]  for all $x \in (0,1)$,
            $c^{-1} \,\textnormal{Cap}_{(d+\eta)/4}(F)\le \P\{\mathcal{L}_x(z\,;u) \cap F\neq\varnothing\}\le c \,\mathcal{H}_{(d-\eta)/4}(F)$;
        \item[\textnormal{(e)}] for all $t\in (0,T]$,
	        $ c^{-1} \,\textnormal{Cap}_{d/2}(G) \le \P\{ \mathcal{L}^t(z\,;u) \cap G\neq\varnothing\}
			\le c \,\mathcal{H}_{(d-\eta)/2}(G)$.
    \end{enumerate}
\end{thm}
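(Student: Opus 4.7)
The plan is to deduce Theorem \ref{t4} from Theorem \ref{t1} by invoking the abstract hitting-probability machinery of \cite{Dalang:05}, exactly in the spirit of Section \ref{sec2}. Each part reduces to a Paley--Zygmund argument for the lower bound and a covering argument for the upper bound, with the one- and two-point density estimates of Theorem \ref{t1} as the only probabilistic input.

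For part (a), the natural metric on $(0,T]\times(0,1)$ is ${\bf\Delta}$, in which $u$ is essentially H\"older of exponent $1/2$ since $\mathrm{Var}(u(t,x)-u(s,y))\asymp{\bf\Delta}((t,x);(s,y))$. For the lower bound, given $\mu\in\mathcal{P}(E)$, form
\[
Z_\epsilon:=\int_E (2\pi\epsilon^2)^{-d/2}\exp\!\Big(-\tfrac{\Vert u(t,x)-z\Vert^2}{2\epsilon^2}\Big)\,\mu(dt\,dx);
\]
Theorem \ref{t1}(b) gives $\E Z_\epsilon\ge c>0$ uniformly in $\epsilon$, while Theorem \ref{t1}(c) yields $\E(Z_\epsilon^2)\le C\,I^{{\bf\Delta}}_{(d+\eta)/2}(\mu)$, and Paley--Zygmund produces the capacity lower bound. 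For the upper bound, cover $E$ by parabolic balls $B^{{\bf\Delta}}((t_i,x_i),r_i)$ and combine the bounded density of Theorem \ref{t1}(a) with a Gaussian tail estimate for the ${\bf\Delta}$-modulus of continuity of $u$ to obtain $\P\{\exists(s,y)\in B^{{\bf\Delta}}((t_i,x_i),r_i):u(s,y)=z\}\le Cr_i^{(d-\eta)/2}$, whence summation yields the $\mathcal{H}^{{\bf\Delta}}_{(d-\eta)/2}$ bound.

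Parts (b) and (c) reduce to (a) via $\{\mathcal{T}(z;u)\cap F\ne\varnothing\}=\{\mathcal{L}(z;u)\cap(F\times J)\ne\varnothing\}$ and the analogous identity for $\mathcal{X}$. The indices follow from the anisotropy of ${\bf\Delta}$: to cover $F\times J$, use parabolic balls of radius $r_i=\ell_i^{1/2}$ (with $\ell_i$ the Euclidean length in time), needing $\sim |J|/\ell_i^{1/2}$ balls of weight $\ell_i^{(d-\eta)/4}$ each, producing the index $(d-2-\eta)/4$; to cover $I\times G$, use balls of radius $r_i=\ell_i$, needing $\sim |I|/\ell_i^2$ balls of weight $\ell_i^{(d-\eta)/2}$ each, producing $(d-4-\eta)/2$. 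The corresponding lower bounds come from Paley--Zygmund applied to $\mu\otimes\mathrm{Leb}_J$ on $F\times J$ and $\mathrm{Leb}_I\otimes\mu$ on $I\times G$. Parts (d) and (e) are one-parameter section problems: for (d), Theorem \ref{t1}(c) specialized to $y=x$ gives $p_{s,x;t,x}(z_1,z_2)\le c|t-s|^{-(d+\eta)/4}\exp(-\Vert z_1-z_2\Vert^2/(c|t-s|^{1/2}))$, and since $t\mapsto u(t,x)$ has H\"older exponent $1/4$, the Paley--Zygmund / covering scheme yields the indices $(d\pm\eta)/4$; for (e), Theorem \ref{t1}(d) is $\eta$-free, so the lower-bound index is exactly $d/2$, while the upper-bound index retains the usual $\eta$-loss from the modulus-of-continuity penalty.

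The main obstacle I anticipate is the upper bound in (a): passing from the pointwise bound of Theorem \ref{t1}(a) to the ball-sup estimate $\P\{\exists(s,y)\in B^{{\bf\Delta}}(\cdot,r):u(s,y)=z\}\lesssim r^{(d-\eta)/2}$ requires a uniform modulus-of-continuity statement for $u$ under ${\bf\Delta}$ with Gaussian tails, and the resulting logarithmic factor must be absorbed into the $\eta$-slack. Once this regularity is in place, the remaining index comparisons reduce to elementary product-set computations relating parabolic and Euclidean Hausdorff measures.
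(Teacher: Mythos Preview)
Your proposal is correct and follows essentially the same route as the paper: both deduce Theorem~\ref{t4} from the density estimates of Theorem~\ref{t1} via the hitting-probability framework of \cite{Dalang:05}. The difference is one of presentation. The paper's proof is a two-line citation: the hypotheses of \cite[Theorems~2.4 and~3.2--3.3, Remark~2.5]{Dalang:05} were already checked in the proof of Theorem~\ref{t2} (bounded one-point density from Theorem~\ref{t1}(a), moment increment bound \eqref{holder}, lower bound from Theorem~\ref{t1}(b), two-point upper bound from Theorem~\ref{t1}(c) with $\beta=d+\eta$, and the $\eta$-free version from Theorem~\ref{t1}(d) for part~(e)), so all five statements follow at once. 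You instead unpack the Paley--Zygmund and covering arguments that live inside those black-box theorems, and your index computations for the product sets $F\times J$ and $I\times G$ are exactly what \cite[Theorem~3.2]{Dalang:05} encodes.

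One minor correction to your outlook: the ``main obstacle'' you flag for the upper bound in~(a) is not really an obstacle. You do not need Gaussian tails for the ${\bf\Delta}$-modulus of continuity; the polynomial moment bound \eqref{holder} for all $p\ge 1$ is what \cite[Theorem~3.3]{Dalang:05} actually requires (hypothesis~(ii) there), and combined with the uniformly bounded density (hypothesis~(i)) it yields the ball estimate $\P\{u(B^{{\bf\Delta}}(\cdot,r))\ni z\}\le Cr^{(d-\eta)/2}$ directly, with the $\eta$-slack absorbing the usual Kolmogorov-continuity loss. So the regularity input is already in place and no separate Gaussian-tail argument is needed.
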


\begin{cor} \label{c5}
Assume {\bf P1} and {\bf P2}.
Choose and fix $z\in\R^d$.
	\begin{itemize}
		\item[\textnormal{(a)}] If $2 < d< 6$, then
			$\dimh\, \cT(z \, ; u) = \frac14 (6-d)$
			a.s.\ on $\{\cT(z \, ; u)\neq\varnothing\}$.
		\item[\textnormal{(b)}] If $4 < d< 6$ (i.e. $d=5$), then
			$\dimh\, \cX(z \, ; u) = \frac12 (6-d)$
			a.s.\ on $\{\cX(z \, ; u)\neq\varnothing\}$.
		\item[\textnormal{(c)}] If $1\leq d< 4$, then
			$\dimh\, \cL_x(z \, ; u) = \frac14 (4-d)$
			a.s.\ on $\{\cL_x(z \, ; u)\neq\varnothing\}$.
		\item[\textnormal{(d)}] If $d=1$, then
			$\dimh\, \cL^t(z\, ; u) = \frac12(2-d)=\frac{1}{2}$
			a.s.\ on $\{\cL^t(z \, ;u)\neq\varnothing\}$.
	\end{itemize}
	In addition, all four right-most events have positive probability.
\end{cor}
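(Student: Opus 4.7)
The plan is to deduce all four Hausdorff-dimension equalities from Theorem \ref{t4} via the codimension framework developed in \textsc{Dalang, Khoshnevisan, and Nualart} \cite{Dalang:05}. The four parts have identical structure modulo the choice of critical index, so I describe the method on (a). Set $\alpha=(6-d)/4$ and $s=(d-2)/4$; these are respectively the predicted dimension of $\cT(z\,;u)$ and its codimension inside the ambient interval $I$, and one has $\alpha+s=1$.

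For the upper bound $\dimh \cT(z\,;u)\le\alpha$ a.s., I use a standard first-moment covering argument. Fix $\gamma>\alpha$ and $\eta>0$ with $\gamma>\alpha+\eta/4$, and partition $I$ into $2^n$ dyadic subintervals $I_n^k$ of length $2^{-n}$. Theorem \ref{t4}(b) gives $\P\{\cT(z\,;u)\cap I_n^k\neq\varnothing\}\le c\,(2^{-n})^{s-\eta/4}$, so
\[
\E\!\Biggl[\sum_{k} (2^{-n})^{\gamma}\,\mathbf{1}_{\{\cT(z\,;u)\cap I_n^k\neq\varnothing\}}\Biggr]\le c\,2^{-n(\gamma-\alpha-\eta/4)}\longrightarrow 0
\]
as $n\to\infty$. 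Fatou's lemma applied to the $\gamma$-Hausdorff measure yields $\cH_\gamma(\cT(z\,;u))=0$ almost surely, and letting $\gamma\downarrow\alpha$ gives the upper bound.

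The main obstacle is the matching lower bound $\dimh\cT(z\,;u)\ge\alpha$ a.s.\ on $\{\cT(z\,;u)\neq\varnothing\}$, because the capacity estimate in Theorem \ref{t4}(b) only produces positive hitting probabilities, not a dimension estimate for $\cT(z\,;u)$ itself. I mimic the argument of \cite{Dalang:05}: fix $\gamma<\alpha$ and small $\eta>0$, enlarge the probability space, and introduce a random compact set $E\subset I$ of Hausdorff dimension $1-\gamma$, independent of $u$, satisfying $\textnormal{Cap}_{s+\eta/4}(E)>0$ almost surely (one may take, for example, the closed range of an appropriate stable subordinator or a Mandelbrot-percolation limit). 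Conditioning on $E$ and applying Theorem \ref{t4}(b) yields $\P\{\cT(z\,;u)\cap E\neq\varnothing\}>0$. Combined with a Marstrand/Mattila-type intersection lemma---two independent closed random subsets of $I$ whose Hausdorff dimensions sum to less than $1$ are disjoint almost surely---this forces $\dimh\cT(z\,;u)\ge\gamma$ with positive probability; a tail zero--one argument and taking $\gamma\uparrow\alpha$ along a countable sequence then promote this to ``a.s.\ on $\{\cT(z\,;u)\neq\varnothing\}$'', matching the upper bound.

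Positivity of $\P\{\cT(z\,;u)\neq\varnothing\}$ is immediate from Theorem \ref{t4}(b) with $F=I$: since $d<6$ permits a choice of $\eta$ with $(d-2+\eta)/4<1$, and a nondegenerate interval has positive $\beta$-capacity for every $\beta<1$, one obtains $\P\{\cT(z\,;u)\cap I\neq\varnothing\}>0$. Parts (b), (c), (d) follow by identical arguments, reading off the codimension from Theorem \ref{t4}(c)--(e) as $(d-4)/2$ in $J$, $d/4$ in $I$, and $d/2$ in $J$ respectively; the stated ranges of $d$ in each case are precisely those for which the codimension lies in $(0,1)$, so that both the first-moment covering argument and the fractal-intersection argument go through and yield a positive hitting probability for the ambient interval.
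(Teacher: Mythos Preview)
Your overall strategy matches the paper's: read off the codimension of each random set from the hitting estimates in Theorem \ref{t4}, and then invoke a dimension/codimension complementarity to obtain the Hausdorff dimension. The paper, however, does not reprove that complementarity: it simply cites Theorem 4.7.1 of \textsc{Khoshnevisan} \cite[Chapter 11]{Khoshnevisan:02}, which states that a random set $E\subset\R^n$ whose codimension lies strictly in $(0,n)$ satisfies $\dimh E+\textnormal{codim}\,E=n$ a.s.\ on $\{E\neq\varnothing\}$. Your upper bound via first-moment covering and your positive-probability argument are both correct and are essentially how the paper (through the cited results) proceeds as well.

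The genuine gaps are in your lower bound. First, the intersection principle you invoke---``two independent closed random subsets of $I$ whose Hausdorff dimensions sum to less than $1$ are disjoint almost surely''---is false as stated: take both sets equal to a fixed Cantor set of dimension $1/3$ (deterministic random variables are independent). What is actually needed is that the auxiliary random set $S$ have a \emph{two-sided} hitting characterization: not merely $\dimh S=1-\gamma$, but that $S$ almost surely avoids every fixed Borel set of zero capacity of the relevant order and hits every fixed set of positive such capacity with positive probability. Closed ranges of stable subordinators enjoy this property, and it is this specific structure---not independence plus a dimension count---that drives the argument. Second, the ``tail zero--one argument'' you use to pass from $\P\{\dimh\cT(z;u)\ge\gamma\}>0$ to the same inequality a.s.\ on $\{\cT(z;u)\neq\varnothing\}$ is unjustified: the event in question is not a tail event for the driving white noise, and no zero--one law of Blumenthal or Kolmogorov type is available for the solution of \eqref{e1}. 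Upgrading to ``a.s.\ on nonemptiness'' is precisely the nontrivial content of \cite[Chapter 11, Theorem 4.7.1]{Khoshnevisan:02}; you should cite it directly rather than sketch an incomplete proof.
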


\begin{rema} The results of the two theorems and corollaries above should be compared with those of \textsc{Dalang, Khsohnevisan and Nualart} \cite{Dalang:05}.
\end{rema}

\section{Proof of Theorems \ref{t2}, \ref{t4} and their corollaries (assuming Theorem \ref{t1})}\label{sec2}

We first recall that equation (\ref{e1}) is formal: a rigorous formulation, following \textsc{Walsh} \cite{Walsh:86}, is as follows.
Let $W^i=(W^i(s,x))_{s \in \mathbb{R}_+, \, x \in [0,1]}$, $i=1,...,d$, be independent Brownian sheets defined on a probability
space $(\Omega, \mathcal{F}, \P)$, and
set $W=(W^1,...,W^d)$. For $t \geq 0$, let $\mathcal{F}_t=\sigma\{W(s,x),\ s \in [0,t],\ x \in [0,1]\}$.
We say that a process $u=\{u(t,x), \, t \in [0,T], \, x
\in [0,1]\}$ is adapted to $(\mathcal{F}_t)$ if $u(t,x)$ is
${\mathcal{F}}_t$-measurable for each $(t,x) \in [0,T] \times [0,1]$.
We say that $u$ is a solution of (\ref{e1}) if $u$ is adapted to $(\mathcal{F}_t)$ and if for $i \in \{1,\dots,d \}$,
\begin{equation}\begin{split} \label{e2} 
u_i(t,x)&= \int_0^t \int_0^1 G_{t-r}(x \,,v) \, \sum_{j=1}^d \s_{i,j}(u(r \,,v)) W^j(dr\,,dv) \\
& \qquad \qquad + \int_0^t \int_0^1 G_{t-r}(x\,,v) \, b_i(u(r\,,v)) \,dr dv,
\end{split}
\end{equation}
where $G_t(x \,,y)$ denotes the Green kernel for the heat
equation with Neumann boundary conditions (see \textsc{Walsh} \cite[Chap 3]{Walsh:86}), and the stochastic integral in (\ref{e2})
is interpreted as in \cite{Walsh:86}.

Adapting the results from~\cite{Walsh:86} to the case $d\geq 1$, one
can show that there exists a unique continuous process $u=\{u(t,x), \, t \in [0,T], \, x
\in [0,1]\}$ adapted to $(\mathcal{F}_t)$ that is a solution of \textnormal{(\ref{e1})}.
Moreover, it is shown in \textsc{Bally, Millet, and Sanz-Sol\'e} \cite{Bally:95} that for any $s,t \in [0,T]$ with $s \leq t$, $x,y \in [0,1]$, and $p>1$,
\begin{equation} \label{holder}
\E[|u(t,x)-u(s,y)|^p] \leq C_{T,p} ({\bf \Delta}((t\,,x)\,; (s\,,y)))^{p/2},
\end{equation}
where ${\bf \Delta}$ is the parabolic metric defined in (\ref{eq:Delta}).
In particular, for any $0<\alpha < 1/2$, $u$ is a.s.~$\alpha$-H\"older continuous in $x$ and
$\alpha/2$-H\"older continuous in $t$.

Assuming Theorem \ref{t1}, we now prove Theorems \ref{t2}, \ref{t4}  and their corollaries.

\begin{proof}[Proof of Theorem \ref{t2}]
(a) In order to prove the upper bound we use \textsc{Dalang, Khoshnevisan, and Nualart}
\cite[Theorem 3.3]{Dalang:05}. Indeed, Theorem \ref{t1}(a) and (\ref{holder}) imply that the hypotheses (i)
and (ii), respectively, of this theorem, are satisfied, and so the conclusion (with $\beta=d-\eta$) is too.

In order to prove the lower bound, we shall use of \cite[Theorem 2.1]{Dalang:05}. This requires checking
hypotheses {\bf A1} and {\bf A2} in that paper. Hypothesis {\bf A1} is a lower bound on the one-point density
function $p_{t,x}(z)$, which is an immediate consquence of Theorem \ref{t1}(b). Hypothesis {\bf A2}
is an upper bound on the two-point density function $p_{s,y;t,x}(z_1, z_2)$, which involves a parameter
$\beta$; we take $\beta=d +\eta$. In this case, Hypothesis {\bf A2} is an immediate consequence of Theorem \ref{t1}(c).
Therefore, the lower bound in Theorem \ref{t2}(a) follows from \cite[Theorem 2.1]{Dalang:05}. This proves (a).

(b) For the upper bound, we again refer to \cite[Theorem 3.3]{Dalang:05} (see also \cite[Theorem 3.1]{Dalang:05}).
For the lower bound, which involves $\text{Cap}_{d-2}(A)$ instead of $\text{Cap}_{d-2+\eta}(A)$, we refer
to \cite[Remark 2.5]{Dalang:05} and observe that hypotheses ${\bf A1}^t$ and ${\bf A2}^t$ there are satisfied with $\beta=d$
(by Theorem \ref{t1}(d)). This proves (b).

(c) As in (a), the upper bound follows from \cite[Theorem 3.3]{Dalang:05} with $\beta=d-\eta$ (see also \cite[Theorem 3.1(3)]{Dalang:05}), and the lower bound follows from \cite[Theorem 2.1(3)]{Dalang:05}, with $\beta=d+\eta$.
Theorem \ref{t2} is proved.
\end{proof}

\begin{proof}[Proof of Corollary \ref{c3}]
We first prove (a). Let $z \in \mathbb{R}^d$. If $d \leq 5$, then there is $\eta>0$ such that $d-6+\eta < 0$, and thus
$\textnormal{Cap}_{d-6+\eta}(\{ z\})=1$. Hence, the lower bound of Theorem \ref{t2} (a) implies that
$\{ z\}$ is not polar. On the other hand, if $d>6$, then for small $\eta>0$, $d-6-\eta > 0$. Therefore,
$\mathcal{H}_{d-6-\eta}(\{ z\})=0$ and the upper bound of Theorem \ref{t2}(a) implies that
$\{ z\}$ is polar. This proves (a). One proves (b) and (c) exactly along the same lines using Theorem \ref{t2}(b) and (c).
\end{proof}

\begin{proof}[Proof of Theorem \ref{t4}]
For the upper bounds in (a)-(e), we use \textsc{Dalang, Khoshnevisan, and Nualart}
\cite[Theorem 3.3]{Dalang:05} whose assumptions we verified above with $\beta=d-\eta$; these upper bounds then
follow immediately from \cite[Theorem 3.2]{Dalang:05}.

For the lower bounds in (a)-(d), we use \cite[Theorem 2.4]{Dalang:05} since we have shown above that
the assumptions of this theorem, with $\beta=d+\eta$, are satisfied by Theorem \ref{t1}. For the lower bound in (e),
we refer to \cite[Remark 2.5]{Dalang:05} and note that by Theorem \ref{t1}(d), Hypothesis ${\bf A2}^t$ there
is satisfied with $\beta=d$.
This proves Theorem \ref{t4}.
\end{proof}

\begin{proof}[Proof of Corollaries  \ref{c3bis} and \ref{c5}]

The final positive-probability assertion in Corollary \ref{c5} is an
	  immediate consequence of Theorem \ref{t4} and
	Taylor's theorem \textsc{Khoshnevisan} \cite[Corollary 2.3.1 p.~523]{Khoshnevisan:02}. 
	
Let $E$ be a random set. When it exists,
	the codimension of $E$ is the real number 
	$\beta \in [0\,,d]$ such that for all compact sets $A \subset
	\mathbb{R}^d$,
	\begin{equation*}
		\P \{ E \cap A \neq \varnothing \}
		\begin{cases}
			>0 & \text{whenever $\dimh (A)> \beta$}, \\
			=0 & \text{whenever $\dimh (A)< \beta$}.
		\end{cases}
	\end{equation*}
	See \textsc{Khoshnevisan}
	\cite[Chap.11, Section 4]{Khoshnevisan:02}. When it is
	well defined, we write
	the said codimension as $\text{codim}(E)$.
Theorems \ref{t2} and \ref{t4} imply that for $d \geq 1$:
$\textnormal{codim}(u(\mathbb{R}_+ \times (0,1)))=$ $(d-6)^+$;
$\textnormal{codim}(u(\{t\} \times (0,1)))$ $=(d-2)^+$;
$\textnormal{codim}(u(\mathbb{R}_+ \times \{x\}))$ $=(d-4)^+$;
$\textnormal{codim}(\cT(z))=$ $(\frac{d-2}{4})^+$;
$\textnormal{codim}(\cX(z))=$ $(\frac{d-4}{2})^+$;
$\textnormal{codim}(\cL_x(z))=$ $\frac{d}{4}$; and
$\textnormal{codim}(\cL^t(z))=$ $\frac{d}{2}$. According to Theorem 4.7.1 of
	\textsc{Khoshnevisan}
	\textnormal{\cite[Chapter 11]{Khoshnevisan:02}},
	given a random set $E$ in $\mathbb{R}^n$ whose
	codimension is strictly between $0$ and $n$,
	\begin{equation} \label{codim}
		\dimh E+\textnormal{codim } E=n
		\qquad\text{a.s.~on}\ \{E\neq\varnothing\}.
	\end{equation}
This implies the statements of Corollaries \ref{c3bis} and \ref{c5}.
\end{proof}

\section{Elements of Malliavin calculus}\label{sec3}

In this section, we introduce, following \textsc{Nualart} \cite{Nualart:95} (see also \textsc{Sanz-Sol\'e} \cite{Sanz:05}), some
elements of Malliavin calculus.
Let $\mathcal{S}$ denote the class of smooth random variables of the form
\begin{equation*}
F=f(W(h_1),...,W(h_n)),
\end{equation*}
where $n \geq 1$, $f \in \mathcal{C}^{\infty}_P(\mathbb{R}^n)$, the set of real-valued
functions $f$ such that $f$ and all its partial derivatives have
at most polynomial growth, $h_i \in \mathcal{H}:=L^2([0,T]
\times [0,1], \mathbb{R}^d)$, and $W(h_i)$ denotes the Wiener integral
\begin{equation*}
W(h_i)= \int_0^T \int_0^1 h_i(t,x) \cdot W(dx, dt), \; \; 1\leq i \leq
n.
\end{equation*}
Given $F \in \mathcal{S}$, its derivative is defined to be the $\mathbb{R}^d$-valued
stochastic process $DF=(D_{t,x}F= (D^{(1)}_{t,x}F,...,D^{(d)}_{t,x}F) , \, (t,x) \in [0,T] \times
[0,1])$ given by
\begin{equation*}
D_{t,x} F= \sum_{i=1}^n \frac{\partial f}{\partial x_i}
(W(h_1),...,W(h_n)) h_i (t,x).
\end{equation*}
More generally, we can define the derivative $D^k F$ of order $k$ of $F$ by setting
\begin{equation*}
D^{k}_{\alpha}F=\sum_{i_1,...,i_k=1}^n \frac{\partial}{\partial{x_{i_1}}}  \cdots
\frac{\partial}{\partial{x_{i_k}}} f(W(h_1),...,W(h_n)) h_{i_1}(\alpha_1) \otimes \cdots \otimes 
h_{i_k}(\alpha_k),
\end{equation*}
where $\alpha=(\alpha_1,...,\alpha_k)$, and $\alpha_i=(t_i,x_i)$,
$1 \leq i \leq k$.
\vskip 12pt
For $p, k\geq 1$, the space $\mathbb{D}^{k,p}$ is the closure of $\mathcal{S}$ with
respect to the seminorm $\Vert \cdot \Vert ^p _{k,p}$ defined by
\begin{equation*}
\Vert F \Vert ^p _{k,p} = \E[|F|^p] + \sum_{j=1}^k
 \E[\Vert D^j F \Vert ^p _{\mathcal{H}^{\otimes j}}],
\end{equation*}
where
\begin{equation*}
\Vert D^j F \Vert ^2 _{\mathcal{H}^{\otimes j}}=\sum_{i_1,...,i_j=1}^d \int_0^T dt_1 \int_0^1 dx_1
\cdots \int_0^T dt_j \int_0^1 dx_j \, \biggl(D_{(t_1,x_1)}^{(i_1)} \cdots D_{(t_j,x_j)}^{(i_j)} F \biggr)^2. 
\end{equation*}
We set $(\mathbb{D}^{\infty})^d= \cap_{p \geq 1}\cap_{k \geq 1}\mathbb{D}^{k,p}$.
\vskip 12pt
The derivative operator $D$ on $L^2(\Omega)$ has an
adjoint, termed the Skorohod integral and denoted by $\delta$, which
is an unbounded operator on $L^2(\Omega, \mathcal{H})$. Its domain, denoted by Dom~$\delta$, is the
set of elements $u \in L^2(\Omega, \mathcal{H})$ such that there exists a constant $c$ such that $|
E [\langle DF, u \rangle_{\mathcal{H}}] | \leq c \Vert F \Vert _{0,2}$, for any $F \in
\mathbb{D}^{1,2}$.
\vskip 12pt
If $u \in$ Dom~$\delta$, then $\delta (u)$ is the element of
$L^2 (\Omega)$ characterized by the following duality
relation:
\begin{equation*}
\E [F \delta (u)]= \E \biggl[\sum_{j=1}^d
\int_0^T \int_0^1 D^{(j)}_{t,x} F \; u_j(t,x) \, dt dx \biggr], \;
\; \text{for all} \; F \in  \mathbb{D}^{1,2}.
\end{equation*}

A first application of Malliavin calculus to the study of
probability laws is the following global criterion for smoothness
of densities.
\begin{thm} \label{3t1}\textnormal{~\cite[Thm.2.1.2 and Cor.2.1.2]{Nualart:95} or~\cite[Thm.5.2]{Sanz:05}}
Let $F=(F^1,...,F^d)$ be an $\mathbb{R}^d$-valued random vector satisfying the following
two conditions: 
\begin{enumerate}
\item[\textnormal{(i)}] $F \in (\mathbb{D}^{\infty})^d$; 
\item[\textnormal{(ii)}] the Malliavin matrix of $F$ defined by
$\gamma_F=(\langle DF^i, D F^j \rangle_{\mathcal{H}})_{1\leq i,j\leq d}$ is
invertible a.s. and $(\textnormal{det} \; \gamma_F)^{-1} \in L^p(\Omega)$ for all $p\geq1$.
\end{enumerate}
Then the probability law of $F$ has an
infinitely differentiable density function.
\end{thm}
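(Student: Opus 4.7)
My approach is the classical integration-by-parts route of Malliavin calculus. The goal is to construct, for every multi-index $\alpha \in \IN^d$, a random variable $H_\alpha \in \bigcap_{p\ge 1} L^p(\Omega)$ such that
\begin{equation*}
\E[(\partial^\alpha \phi)(F)] \;=\; \E[\phi(F)\, H_\alpha]
\end{equation*}
for every bounded smooth $\phi : \IR^d \to \IR$, and then to read off smoothness of the density of $F$ via Fourier inversion.

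\textbf{Smoothness of the inverse Malliavin matrix.} Since $F \in (\mathbb{D}^\infty)^d$, the Leibniz rule for Malliavin derivatives gives that each entry $(\gamma_F)_{ij} = \langle DF^i, DF^j \rangle_\cH$ belongs to $\mathbb{D}^\infty$. By Cramer's formula, $(\gamma_F^{-1})_{ij} = C_{ji}/\det\gamma_F$, where the cofactor $C_{ji}$ is a polynomial in the $(\gamma_F)_{kl}$ and hence lies in $\mathbb{D}^\infty$. Combined with hypothesis (ii), which provides $(\det \gamma_F)^{-1} \in \bigcap_p L^p(\Omega)$, this yields uniform $L^p$-bounds on $(\gamma_F^{-1})_{ij}$. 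Differentiating the identity $\gamma_F \, \gamma_F^{-1} = I$ expresses $D(\gamma_F^{-1})$ as a polynomial in the entries of $\gamma_F^{-1}$ and of $D\gamma_F$; iterating this relation order by order shows $(\gamma_F^{-1})_{ij} \in \mathbb{D}^{k,p}$ for all $k,p \ge 1$.

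\textbf{Iterated integration by parts and Fourier conclusion.} The chain rule gives $\langle D(\phi(F)), DF^j \rangle_\cH = \sum_i (\partial_i \phi)(F)\, (\gamma_F)_{ij}$. Multiplying by $(\gamma_F^{-1})_{j\ell}$, summing over $j$, taking expectation, and using the duality relation defining the Skorohod integral,
\begin{equation*}
\E[(\partial_\ell \phi)(F)] \;=\; \sum_{j=1}^d \E\!\left[\phi(F)\, \delta\!\left((\gamma_F^{-1})_{j\ell}\, DF^j\right)\right],
\end{equation*}
the Skorohod integrand being in $\text{Dom}\,\delta$ thanks to the first step. Iterating this identity $|\alpha|$ times produces the desired $H_\alpha$, with $H_\alpha \in L^p(\Omega)$ for every $p$ by Meyer-type inequalities for $\delta$. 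Now apply the identity with $\phi$ the real and imaginary parts of $x \mapsto e^{i\xi \cdot x}$: this bounds $|\xi^\alpha|\,|\widehat{\mu_F}(\xi)|$ by $\|H_\alpha\|_{L^1(\Omega)}$ for every $\alpha$, where $\mu_F$ is the law of $F$. Hence $\widehat{\mu_F}$ decays faster than any polynomial, so $\mu_F$ has a density
\begin{equation*}
p_F(z) \;=\; (2\pi)^{-d} \int_{\IR^d} e^{-i\xi \cdot z}\, \widehat{\mu_F}(\xi)\, d\xi,
\end{equation*}
and differentiation under the integral sign yields $p_F \in C^\infty(\IR^d)$.

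The main obstacle is the first step: verifying that iterated Malliavin derivatives of $\gamma_F^{-1}$ lie in every $L^p$. The difficulty is that the formal expression for $D^k(\gamma_F^{-1})$ produces a proliferating sum of products involving $(\det \gamma_F)^{-m}$ and partial derivatives of $\gamma_F$; the key structural fact that makes the bookkeeping succeed is assumption (ii), which, together with the polynomial character of the cofactors and Hölder's inequality, propagates the required $L^p$-control inductively through the differentiation of $\gamma_F \, \gamma_F^{-1} = I$.
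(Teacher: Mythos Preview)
The paper does not supply its own proof of this theorem: it is stated with a direct citation to \cite[Thm.~2.1.2 and Cor.~2.1.2]{Nualart:95} and \cite[Thm.~5.2]{Sanz:05}, and is used as a black-box input. Your sketch is correct and is precisely the classical argument found in those references: establish that $\gamma_F^{-1}\in(\mathbb{D}^\infty)^{d\times d}$ via Cramer's rule and the differentiation identity for $\gamma_F\gamma_F^{-1}=I$ (the same identity the paper records later as \eqref{eulalia}), build the integration-by-parts weights $H_\alpha$ exactly as in Proposition~\ref{n}, and conclude smoothness by polynomial decay of the characteristic function.
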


A random vector $F$ that satisfies conditions (i) and (ii) of Theorem~\ref{3t1} is said to be
nondegenerate. For a nondegenerate random vector, the following
integration by parts formula plays a key role.
\begin{prop} \label{n} \textnormal{~\cite[Prop.3.2.1]{Nualart:98} or~\cite[Prop.5.4]{Sanz:05}}
Let $F=(F^1,...,F^d) \in (\mathbb{D}^{\infty})^d$ be a nondegenerate
random vector, let $G \in \mathbb{D}^{\infty}$ and let $g \in
\mathcal{C}^{\infty}_P(\mathbb{R}^d)$. Fix $k\geq1$. Then for any
multi-index $\alpha=(\alpha_1,...,\alpha_k) \in \{1,\dots,d
\}^k$, there is an element $H_{\alpha}(F,G) \in \mathbb{D}^{\infty}$
such that
\begin{equation*}
\E [(\partial_{\alpha}g) (F) G]= \E [g(F)
H_{\alpha}(F,G)].
\end{equation*}
In fact, the random variables $H_{\alpha}(F,G)$ are recursively given by
\begin{equation*}\begin{split}
H_{\alpha}(F,G)& = H_{(\alpha_k)}(F,H_{(\alpha_1,\dots,\alpha_{k-1})}(F,G)), \\ 
H_{(i)}(F,G)& =\sum_{j=1}^d\delta(G \, (\gamma_{F}^{-1})_{i, j} \,
DF^j).
\end{split}\end{equation*}
\end{prop}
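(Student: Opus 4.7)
The plan is to establish the case $k=1$ by combining the chain rule with the duality relation that defines the Skorohod integral, and then to obtain the general case by induction on $k$.

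\textbf{Base case $k=1$.} For a fixed index $i$, the strategy is to express $\partial_i g(F)$ as an inner product in $\mathcal{H}$. By the chain rule in Malliavin calculus, $D(g(F)) = \sum_{j=1}^{d} \partial_j g(F)\, DF^j$. Taking the $\mathcal{H}$-inner product with $DF^\ell$ gives
\begin{equation*}
\langle D(g(F))\,, DF^\ell \rangle_{\mathcal{H}} = \sum_{j=1}^d \partial_j g(F)\,(\gamma_F)_{j,\ell}.
\end{equation*}
Since $\gamma_F$ is a.s.\ invertible under the nondegeneracy assumption, we can solve for $\partial_i g(F)$:
\begin{equation*}
\partial_i g(F) = \sum_{j=1}^d (\gamma_F^{-1})_{i,j}\,\langle D(g(F))\,, DF^j \rangle_{\mathcal{H}}.
\end{equation*}
Multiplying by $G$, taking expectations, and using the duality relation defining $\delta$ then yields
\begin{equation*}
\E[\partial_i g(F)\,G] = \sum_{j=1}^d \E\bigl[ \langle D(g(F))\,, G\,(\gamma_F^{-1})_{i,j}\,DF^j \rangle_{\mathcal{H}} \bigr] = \E\bigl[g(F)\,H_{(i)}(F,G)\bigr],
\end{equation*}
with $H_{(i)}(F,G) = \sum_j \delta(G\,(\gamma_F^{-1})_{i,j}\,DF^j)$, as claimed.

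\textbf{Domain verification.} The main technical point is to justify that each $G\,(\gamma_F^{-1})_{i,j}\,DF^j$ lies in the domain of $\delta$, and indeed in $\mathbb{D}^\infty(\mathcal{H})$. The entries $(\gamma_F^{-1})_{i,j}$ can be written as $(\det \gamma_F)^{-1}$ times cofactors of $\gamma_F$. The cofactors are polynomials in the entries $\langle DF^i, DF^j\rangle_{\mathcal{H}}$, hence belong to $\mathbb{D}^\infty$ because $F \in (\mathbb{D}^\infty)^d$. For the reciprocal of the determinant, one applies the chain rule for smooth functions of nondegenerate random variables: since $(\det \gamma_F)^{-1}$ has moments of all orders by hypothesis (ii), a standard approximation argument (using $\varphi_\varepsilon(x) = (x+\varepsilon)^{-1}$ and letting $\varepsilon \downarrow 0$) gives $(\det \gamma_F)^{-1} \in \mathbb{D}^\infty$. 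Combined with $G \in \mathbb{D}^\infty$ and $DF^j \in \mathbb{D}^\infty(\mathcal{H})$, this places $G\,(\gamma_F^{-1})_{i,j}\,DF^j$ in $\mathbb{D}^\infty(\mathcal{H}) \subset \text{Dom}\,\delta$, and the continuity of $\delta$ on $\mathbb{D}^\infty(\mathcal{H})$ ensures $H_{(i)}(F,G) \in \mathbb{D}^\infty$.

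\textbf{Inductive step.} Assuming the formula holds for multi-indices of length $k-1$ with random variable $H_{(\alpha_1,\ldots,\alpha_{k-1})}(F,G) \in \mathbb{D}^\infty$, I apply the base case with $g$ replaced by $\partial_{(\alpha_1,\ldots,\alpha_{k-1})} g$ and $G$ replaced by $H_{(\alpha_1,\ldots,\alpha_{k-1})}(F,G)$:
\begin{equation*}
\E[\partial_\alpha g(F)\,G] = \E\bigl[\partial_{\alpha_k}(\partial_{(\alpha_1,\ldots,\alpha_{k-1})} g)(F)\,H_{(\alpha_1,\ldots,\alpha_{k-1})}(F,G)\bigr].
\end{equation*}
One more application of the $k=1$ identity, using that $\partial_{(\alpha_1,\ldots,\alpha_{k-1})} g \in \mathcal{C}^\infty_P(\mathbb{R}^d)$, produces the announced recursion $H_\alpha(F,G) = H_{(\alpha_k)}(F, H_{(\alpha_1,\ldots,\alpha_{k-1})}(F,G))$.

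The main obstacle is the domain verification: specifically, showing $(\det \gamma_F)^{-1} \in \mathbb{D}^\infty$ from the mere integrability assumption $(\det \gamma_F)^{-1} \in \cap_p L^p$. This is handled by the mollification argument mentioned above, which is standard but not a one-line computation; the remaining algebra is routine given the chain rule and duality.
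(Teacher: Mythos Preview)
The paper does not prove this proposition: it is quoted as a standard result from \textsc{Nualart}~\cite[Prop.~3.2.1]{Nualart:98} and \textsc{Sanz-Sol\'e}~\cite[Prop.~5.4]{Sanz:05}, with no argument given in the paper itself. Your proof is exactly the standard one found in those references---chain rule to invert $\gamma_F$, duality to produce $\delta$, then induction on the length of the multi-index---so there is no meaningful difference in approach to report.

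One small slip in your inductive step: the displayed equation
\[
\E[\partial_\alpha g(F)\,G] = \E\bigl[\partial_{\alpha_k}(\partial_{(\alpha_1,\ldots,\alpha_{k-1})} g)(F)\,H_{(\alpha_1,\ldots,\alpha_{k-1})}(F,G)\bigr]
\]
is not what the inductive hypothesis gives. The right-hand side should read $\E\bigl[\partial_{\alpha_k} g(F)\,H_{(\alpha_1,\ldots,\alpha_{k-1})}(F,G)\bigr]$, obtained by applying the length-$(k-1)$ formula to the function $\partial_{\alpha_k} g$ (not to $\partial_{(\alpha_1,\ldots,\alpha_{k-1})} g$). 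Then one more application of the base case, with $g$ replaced by $g$ itself and $G$ replaced by $H_{(\alpha_1,\ldots,\alpha_{k-1})}(F,G)$, closes the induction. The surrounding prose also swaps the roles of ``base case'' and ``inductive hypothesis'' in describing this step. These are cosmetic errors in an otherwise correct argument.
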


Proposition~\ref{n} with $G=1$ and $\alpha=(1,...,d)$ implies
the following expression for the density of a
nondegenerate random vector.
\begin{cor}\label{no} \textnormal{~\cite[Corollary 3.2.1]{Nualart:98}}
Let $F=(F^1,...,F^d) \in (\mathbb{D}^{\infty})^d$ be a nondegenerate
random vector and let $p_F(z)$ denote the density of $F$. Then for every subset
$\sigma$ of the set of indices $\{1,...,d\}$,
\begin{equation*}
p_F(z)=(-1)^{d-|\sigma|} \E [1_{\{F^i>z^i, i \in \sigma, \,
F^i < z^i, i \not\in \sigma \}} H_{(1,...,d)}(F,1)],
\end{equation*}
where $|\sigma|$ is the cardinality of $\sigma$, and, in
agreement with Proposition~\ref{n},
\begin{equation*}
H_{(1,...,d)}(F,1)=\delta((\gamma_{F}^{-1} DF)^d
\delta((\gamma_{F}^{-1} DF)^{d-1} \delta( \cdots
\delta((\gamma_{F}^{-1} DF)^{1})\cdots))).
\end{equation*}
\end{cor}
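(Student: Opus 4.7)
The plan is to apply Proposition~\ref{n} with $G=1$ and $\alpha=(1,\ldots,d)$ to a smooth test function approximating an orthant-type indicator whose $d$-fold mixed partial derivative $\partial_{(1,\ldots,d)}$ concentrates at $z$, then to pass to the limit in the resulting integration-by-parts identity. The explicit nested-Skorohod-integral expression for $H_{(1,\ldots,d)}(F,1)$ displayed in the corollary needs no separate argument: it is exactly what the recursion
\[
H_{(i)}(F,G)=\sum_j\delta\bigl(G\,(\gamma_F^{-1})_{i,j}\,DF^j\bigr),\qquad H_\alpha(F,G)=H_{(\alpha_k)}\bigl(F,H_{(\alpha_1,\ldots,\alpha_{k-1})}(F,G)\bigr)
\]
of Proposition~\ref{n} unfolds to when $G=1$ and $\alpha=(1,\ldots,d)$.

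Fix $\sigma\subseteq\{1,\ldots,d\}$. Let $\psi\in C^\infty_c(\R)$ be nonnegative with $\int\psi=1$ and support in $[-1,1]$; set $\psi_\e(u)=\e^{-1}\psi(u/\e)$, $\Phi^+_\e(u)=\int_{-\infty}^u \psi_\e(v)\,dv$, and $\Phi^-_\e=1-\Phi^+_\e$. Define
\[
g_\e(y)=\prod_{i\in\sigma}\Phi^+_\e(y^i-z^i)\prod_{i\notin\sigma}\Phi^-_\e(y^i-z^i).
\]
Each $g_\e$ is bounded together with all its partial derivatives, so $g_\e\in\mathcal{C}^\infty_P(\R^d)$, and a direct computation gives
\[
\partial_{(1,\ldots,d)}g_\e(y)=(-1)^{d-|\sigma|}\prod_{i=1}^d\psi_\e(y^i-z^i).
\]
Proposition~\ref{n} then reads
\[
(-1)^{d-|\sigma|}\,\E\biggl[\prod_{i=1}^d\psi_\e(F^i-z^i)\biggr]=\E\bigl[g_\e(F)\,H_{(1,\ldots,d)}(F,1)\bigr].
\]

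Finally I would let $\e\to 0^+$. The left-hand side is $(-1)^{d-|\sigma|}$ times the convolution of $p_F$ with the product approximate identity $\prod_i\psi_\e(\cdot-z^i)$, evaluated at $z$; since $p_F$ is continuous (in fact $C^\infty$ by Theorem~\ref{3t1}), this tends to $(-1)^{d-|\sigma|}p_F(z)$. For the right-hand side, $|g_\e(F)|\le 1$ and $g_\e(F)\to\mathbf 1_{\{F^i>z^i,\,i\in\sigma;\,F^i<z^i,\,i\notin\sigma\}}$ almost surely, because each marginal $F^i$ has a density so $\P\{F^i=z^i\}=0$. Proposition~\ref{n} itself guarantees $H_{(1,\ldots,d)}(F,1)\in\mathbb{D}^\infty\subset L^1(\Omega)$, so dominated convergence yields the claim. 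The only delicate point in the whole argument is choosing the smoothing so that $g_\e\in\mathcal{C}^\infty_P$ while its mixed derivative is a genuine product approximate identity; once that is set up, the rest is routine.
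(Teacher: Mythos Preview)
Your proof is correct and follows exactly the route the paper indicates: the paper does not give a proof of this corollary but simply cites it from \textsc{Nualart}~\cite{Nualart:98} and remarks that it follows from Proposition~\ref{n} with $G=1$ and $\alpha=(1,\ldots,d)$. Your mollification argument is precisely the standard way to fill in that implication, and the details (the sign computation, the use of continuity of $p_F$ on the left, and dominated convergence with $H_{(1,\ldots,d)}(F,1)\in L^1$ on the right) are all in order.
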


The next result gives a criterion for uniform boundedness of the density of a
nondegenerate random vector.
\begin{prop} \label{normHc}
For all $p>1$ and $\ell \geq1$, let $c_1=c_1(p)>0$ and $c_2=c_2(\ell,p)\geq 0$ be fixed.
Let $F \in (\mathbb{D}^{\infty})^d$ be a nondegenerate random vector such that
\begin{itemize}
\item[\textnormal{(a)}] $\E [(\textnormal{det}
\,\gamma_{F} )^{-p}] \leq c_1$;
\item[\textnormal{(b)}] $\E [\Vert D^{l}(F^i)
\Vert^p_{\mathcal{H}^{\otimes \ell}}] \leq c_2 ,\; i=1,...,d$.
\end{itemize}
Then the density of $F$ is uniformly bounded, and the bound does not depend on $F$ but only on the constants
$c_1(p)$ and $c_2(\ell,p)$.
\end{prop}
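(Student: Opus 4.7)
The plan is to use the integration by parts formula of Corollary \ref{no} with the choice $\sigma = \emptyset$, which gives
\begin{equation*}
p_F(z) = (-1)^d\, \E\bigl[\1_{\{F^i < z^i,\ i=1,\dots,d\}}\, H_{(1,\dots,d)}(F,1)\bigr],
\end{equation*}
and hence the pointwise, $z$-independent estimate
\begin{equation*}
p_F(z) \le \E\bigl[|H_{(1,\dots,d)}(F,1)|\bigr].
\end{equation*}
Thus the whole task reduces to showing that $\E[|H_{(1,\dots,d)}(F,1)|]$ is bounded by a constant that depends only on $d$ and on the numbers $c_1(p)$, $c_2(\ell,p)$ appearing in (a) and (b).

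To bound this expectation, I would unwind the recursive definition
\begin{equation*}
H_{(1,\dots,d)}(F,1) = \delta\bigl((\gamma_F^{-1}DF)^d\, \delta((\gamma_F^{-1}DF)^{d-1}\,\delta(\cdots \delta((\gamma_F^{-1}DF)^1)\cdots ))\bigr)
\end{equation*}
using the continuity of the Skorohod integral in Sobolev–Watanabe norms: for every $p>1$ and $k \ge 0$, there is a constant such that $\|\delta(u)\|_{k,p} \le c\, \|u\|_{k+1,p,\mathcal{H}}$ (Meyer's inequality; \cite{Nualart:95}). Applying this inequality $d$ times, one obtains a bound of the form
\begin{equation*}
\E[|H_{(1,\dots,d)}(F,1)|]^{p_0} \le c\, \bigl\|\gamma_F^{-1}DF\bigr\|_{k,p}^{\,N}
\end{equation*}
for some fixed $k,p,N$ depending only on $d$, where the norm on the right is a suitable $\mathbb{D}^{k,p}(\mathcal{H})$-norm of the $(\gamma_F^{-1}DF)^i$.

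The remaining step is then to control Sobolev norms of $\gamma_F^{-1}DF$. For the entries of $DF$ this is immediate from hypothesis (b). For $\gamma_F^{-1}$ one uses Cramer's rule, writing $(\gamma_F^{-1})_{ij} = (\det\gamma_F)^{-1}\, A_{ij}(F)$ where $A_{ij}(F)$ is a polynomial in the entries of $\gamma_F$, i.e.\ in inner products of Malliavin derivatives of $F$; higher derivatives are handled via the identity $D(\gamma_F^{-1}) = -\gamma_F^{-1}(D\gamma_F)\gamma_F^{-1}$ iterated with the Leibniz rule, yielding expressions that are rational in $(\det\gamma_F)^{-1}$ and polynomial in the $D^\ell F^i$ for a bounded range of $\ell$. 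A Hölder inequality then splits the resulting expectation into two groups of factors: negative moments $\E[(\det\gamma_F)^{-q}]$, controlled by (a) via the inequality $\E[(\det\gamma_F)^{-q}]\le c_1(q)$ applied for finitely many $q$, and positive moments $\E[\|D^\ell F^i\|^{r}_{\mathcal{H}^{\otimes\ell}}]$, controlled by (b) for finitely many pairs $(\ell,r)$. The final bound is a fixed algebraic combination of $c_1(\cdot)$ and $c_2(\cdot,\cdot)$ that does not involve $F$ itself, as required.

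The main obstacle, and the only genuinely delicate point, is the bookkeeping in the Meyer-inequality step: one must check that the numbers of derivatives and the integrability exponents needed at each of the $d$ nested applications of $\delta$, together with those produced by differentiating $\gamma_F^{-1}$ via Cramer's rule and the identity $D\gamma_F^{-1}=-\gamma_F^{-1}(D\gamma_F)\gamma_F^{-1}$, stay within a finite range depending only on $d$. Once this is done, hypotheses (a) and (b) supply all the needed estimates and the universality of the bound (dependence only on the $c_1(p)$ and $c_2(\ell,p)$) is automatic because no step of the argument uses any information about $F$ beyond those hypotheses.
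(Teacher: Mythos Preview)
Your proposal is correct and follows essentially the same route as the paper's own proof: start from Corollary~\ref{no}, bound the density by a norm of $H_{(1,\dots,d)}(F,1)$, peel off the $d$ nested Skorohod integrals via the continuity of $\delta$ and H\"older's inequality, and control the resulting Sobolev norms of $(\gamma_F^{-1})_{ij}$ through Cramer's rule together with the identity $D(\gamma_F^{-1})=-\gamma_F^{-1}(D\gamma_F)\gamma_F^{-1}$, reducing everything to finitely many moments governed by (a) and (b). The only cosmetic difference is that the paper bounds $p_F(z)$ by $\|H_{(1,\dots,d)}(F,1)\|_{0,2}$ via Cauchy--Schwarz and writes out one explicit step of the iteration (their inequality~\eqref{iteration}) rather than summarizing the $d$ applications of Meyer's inequality in one line as you do.
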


\begin{proof}
The proof of this result uses the same arguments as in the proof
of \textsc{Dalang and Nualart} \cite[Lemma 4.11]{Dalang:04}. Therefore, we will only give the
main steps.

Fix $z \in \mathbb{R}^d$. Thanks to Corollary~\ref{no} and the
Cauchy-Schwarz inequality we find that
\begin{equation*}
\vert p_F(z) \vert \leq \Vert H_{(1,...,d)}(F,1) \Vert_{0,2}.
\end{equation*}
Using the continuity of the Skorohod integral $\delta$ 
(cf. \textsc{Nualart} \cite[Proposition 3.2.1]{Nualart:95} and \textsc{Nualart} \cite[(1.11) and p.131]{Nualart:98}) and H\"older's inequality for Malliavin norms (cf. \textsc{Watanabe} \cite[Proposition 1.10,
p.50]{Watanabe:84}), we obtain
\begin{equation} \label{iteration}
\Vert H_{(1,...,d)}(F,1) \Vert_{0,2}  \leq c \Vert
H_{(1,...,d-1)}(F,1) \Vert_{1,4}  \sum_{j=1}^d
\Vert (\gamma_{F}^{-1})_{d, j} \Vert_{1,8} \, \Vert D(F^j)
\Vert_{1,8}.
\end{equation}
In agreement with hypothesis (b), $\Vert D(F^j)
\Vert_{m,p} \leq c$. In order to bound the second factor in (\ref{iteration}), note that
\begin{equation} \label{defigamma}
\Vert (\gamma_{F}^{-1})_{i,j}
\Vert_{m,p}=\biggl\{ \E [|(\gamma_{F}^{-1})_{i, j}|^p ] +
\sum_{k=1}^m \E [\Vert D^{k} (\gamma_{F}^{-1})_{i, j}
\Vert^p _{\mathcal{H}^{\otimes k}} ] \biggr\}^{1/p}.
\end{equation}
For the first term in (\ref{defigamma}), we use Cramer's formula to get that
\begin{equation*}
|(\gamma_{F}^{-1})_{i, j}|=|(\text{det} \,
\gamma_F)^{-1} (A_F)_{i, j}|,
\end{equation*}
where $A_F$ denotes the cofactor matrix of $\gamma_{F}$. By means of
Cauchy-Schwarz inequality and hypotheses (a) and (b)
we find that
\begin{equation*}\begin{split}
\E [ ((\gamma_{F}^{-1})_{i, j})^p ] &\leq c_{d,p}
\{ \E [ (\text{det} \, \gamma_{F})^{-2p}]\}^{1/2} \times
\{\E [ \Vert
D(F)\Vert_{\mathcal{H}}^{4p(d-1)}]\}^{1/2}\\ 
 \leq c_{d,p},
\end{split}\end{equation*}
where none of the constants depend on $F$.
For the second term on the right-hand side of (\ref{defigamma}), we iterate the equality (cf. \textsc{Nualart} \cite[Lemma
2.1.6]{Nualart:95})
\begin{equation} \label{eulalia}
D ( \gamma_{F}^{-1} )_{i, j}=- \sum_{k,\ell=1}^d ( \gamma_{F}^{-1}
)_{i, k} D ( \gamma_{F})_{k, \ell} ( \gamma_{F}^{-1} )_{\ell, j},
\end{equation}
in the same way as in the proof of \textsc{Dalang and Nualart} \cite[Lemma 4.11]{Dalang:04}.
Then, appealing again to hypotheses (a) and (b) and iterating
the inequality (\ref{iteration}) to bound the first factor on the right-hand side of (\ref{defigamma}), we obtain the uniform boundedness
of $p_F(z)$.
\end{proof}
\vskip 12pt
We finish this section with a result that will be used later on to bound negative moments of a random variable,
as is needed to check hypothesis (a) of Proposition \ref{normHc}.
\begin{prop} \label{deter}
Suppose $Z\ge 0$ is a random variable for
    which we can find $\e_0\in(0,1)$, processes
    $\{Y_{i,\e}\}_{\e\in(0,1)}$ ($i=1,2$), and  constants
    $c>0$ and $0\le \alpha_2\le \alpha_1$
    with the property that
    $Z \ge \min(
    c \e^{\alpha_1} - Y_{1,\e} \,
    c \e^{\alpha_2} - Y_{2,\e})$  for all $\e\in(0,\e_0)$.
    Also suppose that we can find $\beta_i>\alpha_i$
    ($i=1,2$), not depending on $\e_0$, such that
    \begin{equation*}
        C(q) := \sup_{0<\e<1}\max\left(
        \frac{\E [ |Y_{1,\e} |^q ]}{\e^{q\beta_1}},      \frac{ \E [ |Y_{2,\e} |^q ]}{\e^{q\beta_2}}\right) <\infty
        \quad\text{for all }q\ge 1.
    \end{equation*}
    Then for all $p\ge 1$,
    there exists a constant $c'\in(0,\infty)$, not depending on
    $\e_0$, such that
    $\E [ |Z|^{-p}] \le c' \e_0^{-p\alpha_1}$.
\end{prop}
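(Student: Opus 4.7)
The approach is a standard truncated layer-cake / Markov argument, but one has to choose the auxiliary parameter $\epsilon$ as a function of the integration variable $\lambda$ to extract the exponent $-p\alpha_1$. I would start from the identity
\begin{equation*}
\E[Z^{-p}] \;\le\; 2\lambda_0^{-p} \;+\; p \int_0^{\lambda_0} \lambda^{-p-1}\, \P\{Z < \lambda\}\, d\lambda,
\end{equation*}
which is obtained by splitting at $\{Z \le \lambda_0\}$ (the tail $\{Z>\lambda_0\}$ trivially contributes at most $\lambda_0^{-p}$) and a change of variables in the layer-cake formula for $\E[Z^{-p} \mathbf{1}_{\{Z \le \lambda_0\}}]$. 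The natural threshold is $\lambda_0 := c\epsilon_0^{\alpha_1}/2$, which already contributes a term of the correct order $\epsilon_0^{-p\alpha_1}$; the game is then to show that the remaining integral is of the same order.

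The key step is a pointwise bound on $\P\{Z<\lambda\}$ for $\lambda \in (0,\lambda_0)$. Given such a $\lambda$, set $\epsilon := (2\lambda/c)^{1/\alpha_1}$; by construction $\epsilon \in (0,\epsilon_0)$ and $c\epsilon^{\alpha_1} = 2\lambda$. Since $\alpha_2 \le \alpha_1$ and $\epsilon<1$, one automatically has $c\epsilon^{\alpha_2} \ge c\epsilon^{\alpha_1} = 2\lambda$. The hypothesis $Z \ge \min(c\epsilon^{\alpha_1}-Y_{1,\epsilon},\, c\epsilon^{\alpha_2}-Y_{2,\epsilon})$ then gives
\begin{equation*}
\{Z < \lambda\} \;\subseteq\; \{Y_{1,\epsilon} > c\epsilon^{\alpha_1}/2\} \;\cup\; \{Y_{2,\epsilon} > c\epsilon^{\alpha_2}/2\}.
\end{equation*}
Markov's inequality of order $q$ combined with the hypothesis on the moments of $Y_{i,\epsilon}$ yields
\begin{equation*}
\P\{Z<\lambda\} \;\le\; (2/c)^q\, C(q)\bigl(\epsilon^{q(\beta_1-\alpha_1)} + \epsilon^{q(\beta_2-\alpha_2)}\bigr) \;\le\; K(q)\, \lambda^{\gamma(q)},
\end{equation*}
where $\gamma(q) := q\cdot \min\bigl((\beta_1-\alpha_1)/\alpha_1,\ (\beta_2-\alpha_2)/\alpha_1\bigr) > 0$ because $\beta_i>\alpha_i$.

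To finish, I would choose $q$ large enough that $\gamma(q) > p$; the integral then converges and
\begin{equation*}
p\int_0^{\lambda_0} \lambda^{-p-1+\gamma(q)}\, d\lambda \;=\; \frac{p}{\gamma(q)-p}\, \lambda_0^{\gamma(q)-p}.
\end{equation*}
Writing $\lambda_0^{\gamma(q)-p} = \lambda_0^{\gamma(q)}\cdot \lambda_0^{-p}$ and using $\lambda_0 \le c/2$ (so $\lambda_0^{\gamma(q)}$ is an absolute constant, independent of $\epsilon_0$) bounds the integral by a constant times $\lambda_0^{-p} \le c''\, \epsilon_0^{-p\alpha_1}$. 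Both terms in the upper bound are therefore $O(\epsilon_0^{-p\alpha_1})$ with constants independent of $\epsilon_0$. The only real subtlety is the bookkeeping: one must check simultaneously that the optimizing choice $\epsilon = (2\lambda/c)^{1/\alpha_1}$ stays in $(0,\epsilon_0)$ (which is exactly why the truncation at $\lambda_0 = c\epsilon_0^{\alpha_1}/2$ is imposed) and that the shift $c\epsilon^{\alpha_2}-\lambda$ in the second event can still be bounded below by $c\epsilon^{\alpha_2}/2$; the hypothesis $\alpha_2 \le \alpha_1$ together with $\epsilon<1$ makes these two requirements compatible. This is the step where carelessness would destroy the claimed independence of $c'$ from $\epsilon_0$, so it is the one to watch.
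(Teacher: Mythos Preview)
Your proof is correct and is essentially the same argument as the paper's, just written in terms of $\lambda=1/y$ rather than $y$: the paper sets $k=(2/c)\e_0^{-\alpha_1}$ and, for $y\ge k$, takes $\e=(2/c)^{1/\alpha_1}y^{-1/\alpha_1}$, then bounds $\P\{Z^{-1}>y\}$ via the same union bound and Markov inequality, and finishes with the layer-cake identity $\E[Z^{-p}]=p\int_0^\infty y^{p-1}\P\{Z^{-1}>y\}\,dy$. Your handling of the second event (using $c\e^{\alpha_2}-\lambda\ge c\e^{\alpha_2}/2$ directly from $\alpha_2\le\alpha_1$ and $\e<1$) is in fact slightly cleaner than the paper's intermediate inequality $\e^{\alpha_2}-\tfrac12\e^{\alpha_1}\ge\tfrac12\e^{\alpha_2}$, but the content is identical.
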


\begin{rema}
    This lemma is of interest mainly when $\beta_2 \leq \alpha_1$.
\end{rema}

\begin{proof}
    Define $k := (2/c)\e_0^{-\alpha_1}$.
    Suppose that $y\ge k$,  and let $\e:=(2/c)^{1/\alpha_1}
    y^{-1/\alpha_1}$. Then $0<\e\le\e_0$,
    $y^{-1}=(c/2)\e^{\alpha_1}$, and for all $q \geq 1$,
    \begin{equation*}\begin{split}
        \P \left\{ Z^{-1}>y \right\} &= \P \left\{ Z<y^{-1}\right\}\\
        &\le \P \left\{ Y_{1,\e}\ge \frac{c}{2}\e^{\alpha_1}\right\}
            + \P \left\{ Y_{2,\e} \ge c\e^{\alpha_2}
            - \frac{c}{2}\e^{\alpha_1}\right\}\\
        &\le \frac{C(q)}{c^q}\left( 2^q \e^{q(\beta_1-\alpha_1)}
            + \e^{q\beta_2} \left[ \e^{\alpha_2}
            - \frac12 \e^{\alpha_1} \right]^{-q} \right).
    \end{split}\end{equation*}
    The inequality
    $\e^{\alpha_2}-(1/2)\e^{\alpha_1}\ge(1/2)\e^{\alpha_2}$
    implies that
    \begin{equation*}
        \P \left\{ Z^{-1}>y \right\}
        \le \frac{C(q)}{c^q}\left( 2^q \e^{q(\beta_1-\alpha_1)}
        + 2^q \e^{q(\beta_2-\alpha_2)} \right)
        \le a y^{-qb},
    \end{equation*}
    where $a$ and $b$ are positive and finite constants that
    do not depend on $y, \e_0$ or $q$. We apply this
    with $q:=(p/b)+1$ to find that for all $p\ge 1$,
    \begin{equation*}\begin{split}
        \E \left[ |Z|^{-p}\right] &= p\int_0^\infty y^{p-1}
            \P \left\{ Z^{-1}>y \right\}\, dy\\
        &\le k^p +ap\int_{k}^\infty y^{-b-1}\, dy
            = k^p + \left(\frac{ap}{b}\right) k^{-b}.
    \end{split}\end{equation*}
    Because $k\ge (2/c)$ and $b>0$, it follows that
    $\E [ |Z|^{-p}] \le  ( 1+ c_1(ap/b) ) k^p$,
    where $c_1:=(c/2)^{b+p}$. This is the desired result.
\end{proof}

\section{Existence, smoothness and uniform boundedness of the one-point density}\label{sec4}

Let $u=\{ u(t,x), \, t \in [0,T], x \in [0,1] \}$ be the solution of
equation (\ref{e2}). In this section, we prove the existence, smoothness and uniform boundedness
of the density of the random vector $u(t,x)$. In particular, this will prove Theorem \ref{t1}(a).
\vskip 12pt
The first result concerns the Malliavin differentiability of $u$ and the
equations satisfied by its derivatives. We refer
to \textsc{Bally and Pardoux} \cite[Proposition 4.3,\, (4.16),\, (4.17)] {Bally:98} for its proof in
dimension one. As we work coordinate by coordinate, the following proposition
follows in the same way and its proof is therefore omitted.
\begin{prop} \label{p4}
Assume {\bf P1}. Then $u(t,x) \in (\mathbb{D}^{\infty})^d$ for any $t \in [0,T]$ and $x \in
[0,1]$. Moreover, its iterated
derivative satisfies
\begin{equation*} \begin{split}
& D^{(k_1)}_{r_1,v_1}  \cdots D^{(k_n)}_{r_n,v_n} (u_i(t,x)) \\ 
& =\sum_{l=1}^d G_{t-r_l}(x,v_l) \biggl( D_{r_1,v_1}^{(k_1)} \cdots D_{r_{l-1},v_{l-1}}^{(k_{l-1})}  D_{r_{l+1},v_{l+1}}^{(k_{l+1})}
\cdots D_{r_n,v_n}^{(k_n)} (\sigma_{i k_l}(u(r_l,v_l)))\biggr) \\ 
& \qquad +\sum_{j=1}^d \int_{r_1 \vee \cdots \vee
r_n}^t \int_0^1 G_{t-\theta} (x, \eta) \prod_{l=1}^n
D^{(k_l)}_{r_l,v_l}(\sigma_{ij}(u(\theta, \eta))) W^j(d\theta, d
\eta) \\
&\qquad +\int_{r_1 \vee \cdots \vee r_n}^t \int_0^1
G_{t-\theta}(x,\eta) \prod_{l=1}^n
D^{(k_l)}_{r_l,v_l}(b_i(u(\theta, \eta))) \, d\theta d\eta
\end{split}\end{equation*}
if $t \leq r_1 \vee \cdots \vee r_n$ and $D^{(k_1)}_{r_1,v_1}
\cdots D^{(k_n)}_{r_n,v_n}(u_i(t,x))=0$ otherwise. Finally, for any $p>1$,
\begin{equation}  \label{sup}
\textnormal{sup}_{(t,x) \in [0,T] \times [0,1]} \E \biggl[
\big\Vert D^{n}(u_i(t,x)) \big\Vert_{\mathcal{H}^{\otimes n}}^p \biggl] < +\infty.
\end{equation}
\end{prop}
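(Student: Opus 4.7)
My plan is to follow the Picard iteration approach of Bally--Pardoux \cite{Bally:98}, which the paper invokes, and show by induction on the order $n$ of the derivative that each coordinate $u_i(t,x)$ belongs to $\mathbb{D}^{n,p}$ for every $p>1$ with uniformly bounded seminorms, and that the iterated derivative satisfies the asserted equation. Working coordinate by coordinate, the argument reduces essentially to the scalar case treated in \cite{Bally:98}, with the only change being bookkeeping of the indices $k_1,\dots,k_n \in \{1,\dots,d\}$ selecting the components of the noise.

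Concretely, I would first set up the standard Picard scheme $u^{0}(t,x)\equiv 0$ and
\begin{equation*}
u^{m+1}_i(t,x) = \int_0^t\!\!\int_0^1 G_{t-r}(x,v)\sum_{j=1}^d \sigma_{ij}(u^m(r,v))\,W^j(dr,dv) + \int_0^t\!\!\int_0^1 G_{t-r}(x,v)\, b_i(u^m(r,v))\,dr\,dv,
\end{equation*}
and prove by a joint induction on $(m,n)$ that $u^m_i(t,x)\in\mathbb{D}^{n,p}$ with $\sup_m \sup_{(t,x)} \E[\Vert D^n u^m_i(t,x)\Vert_{\mathcal{H}^{\otimes n}}^p]<\infty$. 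The $n=1$ case uses the chain rule on $\sigma_{ij}(u^m)$ and $b_i(u^m)$, combined with the Burkholder--Davis--Gundy inequality for the Skorohod-type stochastic integral, the $L^p$ estimate $\int_0^t\!\int_0^1 G_{t-r}(x,v)^2\, dr\,dv \leq C t^{1/2}$, boundedness of the first derivatives of $\sigma_{ij},b_i$ from \textbf{P1}, and Gronwall's lemma applied in the variable $t-r_1$ (the other variables being parameters). This produces the equation stated in the proposition with $n=1$.

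For general $n$, I differentiate the Picard recursion $n$ times; the chain rule applied to the composite functions $\sigma_{ij}(u^m(\theta,\eta))$ and $b_i(u^m(\theta,\eta))$ produces, via Faà di Bruno, a sum over partitions of $\{1,\dots,n\}$ of products of lower-order iterated derivatives of $u^m$ multiplied by partial derivatives of $\sigma_{ij}$ or $b_i$ evaluated at $u^m$. The first term in the proposition arises from differentiating the $W^{k_l}(dr_l,dv_l)$ integrand at time $r_l$, yielding the factor $G_{t-r_l}(x,v_l)$ together with the mixed derivative $D^{(k_1)}_{r_1,v_1}\cdots\widehat{D^{(k_l)}_{r_l,v_l}}\cdots D^{(k_n)}_{r_n,v_n}(\sigma_{ik_l}(u(r_l,v_l)))$; the other two integral terms in the proposition correspond to the ``surviving'' stochastic and pathwise integrals with all $n$ derivatives falling on the integrands. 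Together with the observation that $D^{(k_l)}_{r_l,v_l}u(\theta,\eta)=0$ when $\theta<r_l$, this accounts for the lower limit $r_1\vee\cdots\vee r_n$ in the integrals.

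To close the induction at order $n$, I would estimate each Faà di Bruno term by H\"older's inequality, using boundedness of the partial derivatives of $\sigma_{ij}$ and $b_i$ of all orders (\textbf{P1}) and the induction hypothesis at orders $\le n-1$ to control the ``lower-order factors'' uniformly in $(\theta,\eta)$; the remaining highest-order factor is then controlled by BDG (for the stochastic integral) and Minkowski (for the pathwise integral) together with the $L^2$ bound on $G$, and a Gronwall argument in $t-(r_1\vee\cdots\vee r_n)$ yields a uniform bound and passes the estimate from $u^m$ to $u^{m+1}$. Passing to the limit $m\to\infty$ using the standard $L^p$ convergence $u^m\to u$ and the closability of $D^n$ then gives both the equation and \eqref{sup}. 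The main technical obstacle is keeping the combinatorics of Faà di Bruno under control while ensuring that the Gronwall step is uniform in the parameters $(r_1,v_1,\dots,r_n,v_n)$; this is handled, exactly as in \cite{Bally:98}, by carrying the supremum over these parameters through the estimates before applying Gronwall.
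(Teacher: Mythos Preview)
Your proposal is correct and follows exactly the approach the paper indicates: the paper omits the proof and simply refers to \textsc{Bally and Pardoux} \cite[Proposition~4.3, (4.16), (4.17)]{Bally:98} for the scalar case, noting that the $d$-dimensional result follows by working coordinate by coordinate. Your outline of the Picard iteration, induction on the derivative order, Fa\`a di Bruno expansion, BDG/Gronwall estimates, and closability of $D^n$ is precisely the content of that reference, adapted in the obvious way to the vector-valued setting.
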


Note that, in particular, the first-order Malliavin derivative satisfies, for
$r<t$,
\begin{equation}  \label{deri}
D_{r,v}^{(k)} (u_i(t,x))= G_{t-r}(x,v) \sigma_{ik} (u(r,v))
+a_i(k,r,v,t,x),
\end{equation}
where
\begin{equation}\begin{split} \label{a} 
&a_i(k,r,v,t,x)=\sum_{j=1}^d \int_r^t \int_0^1 G_{t-\theta}(x,
\eta) D^{(k)}_{r,v}(\sigma_{ij}(u(\theta, \eta))) \, W^j(d\theta,
d\eta) \\
& \qquad \qquad \qquad \qquad \qquad + \int_r^t \int_0^1
G_{t-\theta}(x, \eta) D^{(k)}_{r,v}(b_i(u(\theta, \eta))) \,
d\theta d \eta,
\end{split}\end{equation}
and $D_{r,v}^{(k)}(u_i(t,x))=0$ when $r>t$. \vskip 12pt The next
result proves property (a) in
Proposition~\ref{normHc} when $F$ is replaced by $u(t,x)$.
\begin{prop} \label{p5}
Assume {\bf P1} and {\bf P2}. Let $I$ and $J$ two compact intervals as in Theorem \ref{t1}. Then, for any $p \geq 1$,
\begin{equation*}
\E \big[(\textnormal{det}\, \gamma_{u(t,x)})^{-p}\big]
\end{equation*}
is uniformly bounded over $(t,x) \in I \times J$.
\end{prop}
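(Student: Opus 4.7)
\medskip

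\noindent\textbf{Proof strategy for Proposition \ref{p5}.}

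The plan is to obtain a quantitative lower bound on the smallest eigenvalue $\lambda_{\min}(t,x)$ of $\gamma_{u(t,x)}$ and then invoke Proposition \ref{deter}. Since $\det\,\gamma_{u(t,x)}\ge \lambda_{\min}(t,x)^d$, it suffices to show that $\E[\lambda_{\min}(t,x)^{-p}]$ is finite and uniformly bounded in $(t,x)\in I\times J$ for every $p\ge 1$. For $\xi\in\R^d$ with $\|\xi\|=1$, I will estimate
\begin{equation*}
\xi^T\gamma_{u(t,x)}\xi \,=\, \sum_{k=1}^d\int_0^t\!\!\int_0^1 \Bigl(\sum_{i=1}^d \xi_i \, D^{(k)}_{r,v} u_i(t,x)\Bigr)^2 dr\,dv,
\end{equation*}
restricting the range of $r$ to $[t-\e,t]$ for small $\e>0$ to be chosen at the end.

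Using the representation \eqref{deri}, the inner sum equals
$G_{t-r}(x,v)\,(\sigma(u(r,v))^T\xi)_k + \sum_i \xi_i\,a_i(k,r,v,t,x)$. The elementary inequality $(a+b)^2\ge \tfrac12 a^2 - b^2$ gives, for every unit vector $\xi$,
\begin{equation*}
\xi^T\gamma_{u(t,x)}\xi \,\ge\, \frac12 \int_{t-\e}^t\!\!\int_0^1 G_{t-r}(x,v)^2 \,\|\sigma(u(r,v))^T\xi\|^2 \,dr\,dv \;-\; R_\e,
\end{equation*}
where, by Cauchy--Schwarz and $\|\xi\|=1$, the remainder is controlled by the $\xi$-free quantity
\begin{equation*}
R_\e \,:=\, \sum_{i,k=1}^d \int_{t-\e}^t\!\!\int_0^1 a_i(k,r,v,t,x)^2 \,dr\,dv.
\end{equation*}
The uniform ellipticity hypothesis {\bf P2} (equivalently, $\|\sigma(y)^T\xi\|\ge\rho$ by equality of singular values for square matrices) together with the standard Gaussian estimate $\int_{t-\e}^t\int_0^1 G_{t-r}(x,v)^2\,dr\,dv \ge c\,\e^{1/2}$, valid for $x\in J$ and $\e$ small uniformly in $t\in I$, shows the main term is bounded below by $c_0 \rho^2 \e^{1/2}$ for a constant $c_0>0$ depending only on $I,J$.

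Next I must show $R_\e$ has moments that are small compared to $\e^{1/2}$. Using Burkholder's inequality on the stochastic integral defining $a_i(k,r,v,t,x)$ in \eqref{a}, the chain rule $D^{(k)}_{r,v}\sigma_{ij}(u(\theta,\eta))=\sum_\ell \partial_\ell\sigma_{ij}(u(\theta,\eta))\,D^{(k)}_{r,v}u_\ell(\theta,\eta)$, the boundedness of $\partial\sigma$ and $\partial b$ from {\bf P1}, and the bootstrap provided by the integral equation satisfied by $Du$ (closing the estimate by a Gronwall-type argument exactly as in the dimension-one setting of \cite{Bally:98}), one obtains, for some $\delta>0$ and every $p\ge 1$,
\begin{equation*}
\E[R_\e^p] \,\le\, C\,\e^{p(1/2+\delta)},
\end{equation*}
where $C$ is uniform in $(t,x)\in I\times J$. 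The exponent $1/2+\delta$ comes from the two extra factors of $G_{t-\theta}(x,\eta)^2$ present in $a_i$ as compared with the main term.

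Combining these bounds and taking infimum over unit $\xi$,
\begin{equation*}
\lambda_{\min}(t,x) \,\ge\, c_0\rho^2\,\e^{1/2} - R_\e \qquad \text{for all } \e\in(0,\e_0),
\end{equation*}
with $\e_0$ depending only on $I$ and $J$. Proposition \ref{deter}, applied with $Z=\lambda_{\min}(t,x)$, $\alpha_1=1/2$, $\beta_1=1/2+\delta$, and $\e_0$ fixed (taking the second alternative trivially by setting $Y_{2,\e}=0$), yields $\E[\lambda_{\min}(t,x)^{-p}]\le c'\e_0^{-p/2}$, a bound independent of $(t,x)\in I\times J$. The claim then follows from $\E[(\det\gamma_{u(t,x)})^{-p}]\le \E[\lambda_{\min}(t,x)^{-dp}]$.

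The principal obstacle is the moment estimate for $R_\e$: Burkholder's inequality produces a factor $\int_r^t\!\int_0^1 G_{t-\theta}(x,\eta)^2 |D_{r,v}^{(k)}u|^2$ that must itself be controlled recursively via \eqref{sup} and the fact that $\|D_{r,v}^{(k)}u(\theta,\eta)\|_{L^p(\Omega)}\le C\,G_{\theta-r}(\eta,v)+\|a(\cdot)\|_{L^p(\Omega)}$, requiring careful tracking of the Green-kernel convolutions to obtain the gain of $\e^{\delta}$ over the main order $\e^{1/2}$. Uniformity in $(t,x)$ is automatic because $I$ is bounded away from $0$ and $J$ is bounded away from $\{0,1\}$, which makes all implicit constants in the Green kernel estimates and Burkholder bounds uniform.
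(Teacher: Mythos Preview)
Your proposal is correct and follows essentially the same approach as the paper: reduce to the smallest eigenvalue, restrict the time integral to a window of size $\e$, split via $(a+b)^2\ge c_1 a^2 - c_2 b^2$ into a main term handled by {\bf P2} plus the lower bound $\int_{t-\e}^t\int_0^1 G_{t-r}^2\,dr\,dv\ge c\e^{1/2}$, and a remainder $R_\e$ whose moments are estimated via Burkholder, then conclude with Proposition~\ref{deter}. The paper carries out the remainder estimate explicitly (using Lemma~\ref{valuedm}, H\"older with respect to $G^2\,d\theta\,d\eta$, Lemma~\ref{(A.5)}, and Lemma~\ref{morien}) and obtains the sharp exponent $\beta_1=1$ rather than your unspecified $1/2+\delta$; in particular no Gronwall argument is needed, since the required bound $\E[(\int_{t-\e}^{t}\!\int_0^1|\sum_l D_{r,v}^{(k)}u_l(\theta,\eta)|^2\,dr\,dv)^q]\le C\e^{q/2}$ is available directly from Lemma~\ref{morien} (a consequence of \eqref{sup}).
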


\begin{proof}
This proof follows \textsc{Nualart} \cite[Proof of (3.22)]{Nualart:98}, where it is shown that for fixed $(t,x)$,
$\E [(\textnormal{det} \gamma_{u(t,x)})^{-p}] < +\infty$. Our emphasis here is on the uniform bound over $(t,x) \in I \times J$.
Assume that $I=[t_1, t_2]$ and $J=[x_1, x_2]$, where $0<t_1<t_2\leq T$, $0<x_1<x_2<1$. Let $(t,x) \in I \times J$ be fixed. We write \begin{equation*}\textnormal{det}\, \gamma_{u(t,x)} \geq \biggl( \textnormal{inf}_{\xi \in
\mathbb{R}^d:  \Vert \xi \Vert =1} \, \xi^{T} \gamma_{u(t,x)}
\xi \biggr)^d.\end{equation*} Let $\xi=(\xi_1,...,\xi_d) \in \mathbb{R}^d$ with $\Vert \xi \Vert=1$ and
fix $\epsilon \in (0,1)$. Note the inequality
    \begin{equation}\label{eq:23-2}
        (a+b)^2\ge \frac23 a^2-2b^2,
    \end{equation}
    valid for all $a,b\ge 0$. Using (\ref{deri}) and the fact that $\gamma_{u(t,x)}$ is a matrix
    whose entries are inner-products, this implies that
\begin{equation*}\begin{split}
\xi^{T} \gamma_{u(t,x)} \xi &=\int_0^t dr
\int_0^1 dv \, \bigg\Vert \sum_{i=1}^d D_{r,v}(u_i(t,x)) \xi_i \bigg\Vert^2 \\
&\geq \int_{t(1-\epsilon)}^t dr
\int_0^1 dv \, \bigg\Vert \sum_{i=1}^d D_{r,v}(u_i(t,x)) \xi_i \bigg\Vert^2 \geq  I_1 -I_2,
\end{split}\end{equation*}
where
\begin{equation*}\begin{split}
I_1=& \frac{2}{3}\int_{t(1-\epsilon)}^t dr \int_0^1 dv \, \sum_{k=1}^d
\left(\sum_{i=1}^d G_{t-r}(x,v) \sigma_{ik}(u(r,v)) \xi_i\right)^2, \\
I_2=& 2 \int_{t(1-\epsilon)}^t dr \int_0^1 dv \, \sum_{k=1}^d
\left(\sum_{i=1}^d a_i(k,r,v,t,x)\xi_i \right)^2,
\end{split}\end{equation*}
and $a_i(k,r,v,t,x)$ is defined in (\ref{a}).
In accord with hypothesis {\bf P2} and thanks to Lemma \ref{(A.3)},
\begin{equation} \label{equa2}
I_1 \geq c (t \epsilon)^{1/2},
\end{equation}
where $c$ is uniform over $(t,x) \in I \times J$.

Next we apply the Cauchy-Schwarz inequality to find that, for any $q\geq 1$,
\begin{equation*}
\E \biggl[ \textnormal{sup}_{\xi \in \mathbb{R}^d : \Vert \xi
\Vert=1} |I_2|^q
\biggr] \leq c (\E [\vert A_1 \vert^q]+\E [\vert A_2 \vert^q]),
\end{equation*}
where
\begin{equation*}\begin{split}
A_1&= \sum_{i,j,k=1}^d
\int_{t(1-\epsilon)}^t dr \int_0^1 dv \, \left(\int_r^t \int_0^1
G_{t-\theta}(x, \eta)
D^{(k)}_{r,v}(\sigma_{ij}(u(\theta, \eta))) \, W^j(d\theta, d\eta)
\right)^2  ,
\\
A_2 &= \sum_{i,k=1}^d  \int_{t(1-\epsilon)}^t
dr \int_0^1 dv \, \left( \int_r^t \int_0^1 G_{t-\theta}(x, \eta)
D^{(k)}_{r,v}(b_i(u(\theta, \eta))) \, d\theta d \eta \right)^2.
\end{split}\end{equation*}

We bound the $q$-th moment of $A_1$ and $A_2$ separately.
As regards $A_1$, we use Burkholder's inequality for martingales with values in a Hilbert space
(Lemma \ref{valuedm}) to obtain
\begin{equation} \label{pre}
\E [\vert A_1 \vert^q] \leq c \sum_{k,i=1}^d  \E \biggl[ \bigg\vert\int_{t(1-\epsilon)}^{t} d \theta
\int_0^1 d \eta  \int_{t(1-\epsilon)}^t dr \int_0^1 dv  \,
 \Theta^2 \bigg\vert^q\biggr],
\end{equation}
where
\begin{equation*}\begin{split}
\Theta &:= 1_{ \{ \theta>r \} }
G_{t-\theta}(x, \eta) \bigg\vert D^{(k)}_{r,v}(\sigma_{ij}(u(\theta, \eta)))  \bigg\vert \\
&\leq c 1_{ \{ \theta>r \} } G_{t-\theta}(x, \eta) \bigg\vert \sum_{l=1}^d D^{(k)}_{r,v}(u_l(\theta, \eta)) \bigg\vert,
\end{split}\end{equation*}
thanks to hypothesis {\bf P1}. Hence,
\begin{equation*}
\E [\vert A_1 \vert^q] \leq c \sum_{k=1}^d  \E \biggl[ \bigg\vert\int_{t(1-\epsilon)}^{t} d \theta  \int_0^1 d \eta\,
G^2_{t-\theta}(x, \eta)  \int_{t(1-\epsilon)}^{t \wedge \theta} dr \int_0^1 dv  \,
 \Psi^2 \bigg\vert^q\biggr],
\end{equation*}
where $\Psi:=\sum_{l=1}^d D^{(k)}_{r,v}(u_l(\theta, \eta))$.
We now apply H\"older's inequality with respect to the measure
$G^2_{t-\theta}(x,\eta) d \theta d \eta$ to find that
\begin{equation*}\begin{split}
&\E [\vert A_1 \vert^q] \leq C \bigg\vert \int_{t(1-\epsilon)}^t d \theta \int_0^1 d \eta \,
G^2_{t-\theta}(x,\eta) \bigg\vert^{q-1} \\
&\qquad\times\int_{t(1-\epsilon)}^t d \theta \int_0^1 d \eta \,
G^2_{t-\theta}(x,\eta) \, \sum_{k=1}^d \E\biggl[ \bigg\vert
\int_{t(1-\epsilon)}^t dr \int_0^1 dv \, \Psi^2 \bigg\vert^q \biggr].
\end{split}\end{equation*}
Lemmas \ref{(A.5)} and \ref{morien} assure that
\begin{equation*}
\E [\vert A_1 \vert^q] \leq C_T (t\epsilon)^{\frac{q-1}{2}} (t\epsilon)^{q/2}
\int_{t(1-\epsilon)}^t \int_0^1 G^2_{t-\theta}(x,\eta) \, d
\theta d \eta \leq C_T (t \epsilon)^q,
\end{equation*}
where $C_T$ is uniform over $(t,x) \in I \times J$.

We next derive a similar bound for $A_2$. By the Cauchy--Schwarz inequality,
\begin{equation*}
        \E \left[ |A_2|^q\right]
        \le c(t\e)^q
             \sum_{i,k=1}^d
            \E \left[ \left| \int_{t(1-\e)}^t
            dr \int_0^1 dv \int_r^t d\theta
            \int_0^1 d\eta\, \Phi^2 \right|^q \right],
    \end{equation*}
    where $\Phi:=G_{t-\theta}(x,\eta) |D^{(k)}_{r,v}\left(b_i(u(\theta,\eta))\right)|$.
    From here on,
    the $q$-th moment of $A_2$ is estimated as that
    of $A_1$ was; cf.\ \eqref{pre}, and this yields
    $\E [|A_2|^q]\le C_T(t\e)^{2q}$.

    Thus, we have proved that
\begin{equation} \label{way}
\E \biggl[ \textnormal{sup}_{\xi \in \mathbb{R}^d : \Vert \xi
\Vert=1} |I_2|^q \biggr] \leq C_T (t \epsilon)^q,
\end{equation}
where the constant $C_T$ is clearly uniform over $(t,x) \in I \times J$.

Finally, we apply Proposition~\ref{deter} with $Z:=\textnormal{inf}_{\Vert \xi \Vert =1} (\xi^{T} \gamma_{u(t,x)}
\xi)$, $Y_{1,\epsilon}=$ $Y_{2,\epsilon}=$ $\textnormal{sup}_{\Vert \xi
\Vert=1} I_2$, $\epsilon_0=1$, $\alpha_1=\alpha_2=1/2$ and $\beta_1=\beta_2=1$,
to get
\begin{equation*}
\E  \biggl[(\textnormal{det} \gamma_{u(t,x)})^{-p}
\biggr] \leq C_T ,
\end{equation*}
where all the constants are clearly uniform over $(t,x) \in I \times J$. This is the desired result.
\end{proof}

\begin{cor}\label{cor4.3}
Assume {\bf P1} and {\bf P2}. Fix $T>0$ and let $I$ and $J$ be a compact intervals as in Theorem \ref{t1}. Then, for any $(t,x) \in (0,T] \times (0,1)$, $u(t,x)$ is a nondegenerate
random vector and its density funciton is infinitely differentiable and uniformly bounded over $z \in \mathbb{R}^d$
and $(t,x)\in I \times J$.
\end{cor}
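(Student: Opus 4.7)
The plan is to verify that the hypotheses of Theorem \ref{3t1} and Proposition \ref{normHc} hold uniformly for $(t,x) \in I \times J$, and then simply read off the conclusion.

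First, I would fix $(t,x) \in (0,T] \times (0,1)$ and check condition (i) of Theorem \ref{3t1}: by Proposition \ref{p4}, we have $u(t,x) \in (\mathbb{D}^\infty)^d$. Next, for condition (ii), I need $(\det \gamma_{u(t,x)})^{-1} \in L^p(\Omega)$ for all $p \geq 1$, which is precisely the content of Proposition \ref{p5}. Consequently, Theorem \ref{3t1} applies to $F = u(t,x)$, yielding that $u(t,x)$ is nondegenerate and that its density $p_{t,x}(\cdot)$ exists and is infinitely differentiable on $\mathbb{R}^d$.

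For the uniform boundedness part, I would invoke Proposition \ref{normHc} with $F = u(t,x)$. Hypothesis (a) of that proposition requires a bound on $\E[(\det \gamma_{u(t,x)})^{-p}]$ that does not depend on $(t,x) \in I \times J$; this is exactly what Proposition \ref{p5} provides. Hypothesis (b) requires a uniform bound on $\E[\|D^\ell(u_i(t,x))\|_{\mathcal{H}^{\otimes \ell}}^p]$ over $(t,x) \in I \times J$ for each $\ell \geq 1$ and $p \geq 1$; this follows from the uniform moment bound \eqref{sup} in Proposition \ref{p4}. Since the constant bounding $p_{t,x}(z)$ in Proposition \ref{normHc} depends only on the constants $c_1(p)$ and $c_2(\ell, p)$ from (a) and (b), and both of these are uniform in $(t,x) \in I \times J$, we obtain that $p_{t,x}(z)$ is uniformly bounded over $z \in \mathbb{R}^d$ and $(t,x) \in I \times J$.

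In essence, the proof is just a matter of assembling the previously established ingredients; the real work has already been done in Propositions \ref{p4} and \ref{p5} (and in the general Proposition \ref{normHc}). There is no serious obstacle to overcome at this stage, beyond making sure the constants are tracked uniformly in $(t,x)$, which is already built into the statements of Propositions \ref{p4} and \ref{p5}.
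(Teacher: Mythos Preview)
Your proposal is correct and follows exactly the same approach as the paper: the paper's proof is a single sentence stating that the result is a consequence of Propositions~\ref{p4} and~\ref{p5} together with Theorem~\ref{3t1} and Proposition~\ref{normHc}, and your write-up simply unpacks this in detail.
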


\begin{proof}[Proof of Theorem \ref{t1}(a)]
This is a consequence of Propositions~\ref{p4} and~\ref{p5} together with
Theorem~\ref{3t1} and Proposition~\ref{normHc}.
\end{proof}

\section{The Gaussian-type lower bound on the one-point density}\label{sec5}

The aim of this section is to prove the lower bound of Gaussian-type for the
density of $u$ stated in Theorem \ref{t1}(b). The proof of this result was given
in \textsc{Kohatsu-Higa} \cite[Theorem 10]{Kohatsu:03} for dimension $1$, therefore we will only
sketch the main steps.

\vskip 12pt

\begin{proof}[Proof of Theorem \ref{t1}(b)]
We follow~\cite{Kohatsu:03} and we show that for each $(t,x)$, $F=u(t,x)$ is a $d$-dimensional uniformly elliptic
random vector and then we apply~\cite[Theorem 5]{Kohatsu:03}. Let
\begin{equation*}
F_n^i=\int_0^{t_n} \int_0^1 G_{t-r}(x,v) \sum_{j=1}^d \sigma_{ij}(u(r,v))
 W^j(dr,dv)+ \int_0^{t_n} \int_0^1 G_{t-r}(x,v) \, b_i(u(r,v))
\,dr dv,
\end{equation*}
$1 \leq i \leq d$, where $0=t_0 < t_1 < \cdots < t_N=t$ is a sufficiently fine partition
of $[0,t]$. Note that $F_n \in
\mathcal{F}_{t_n}$. Set $g(s,y)=G_{t-s}(x,y)$. We shall need the following two lemmas.
\begin{lem}  \textnormal{~\cite[Lemma 7]{Kohatsu:03}}
Assume {\bf P1} and {\bf P2}. Then:
\begin{enumerate}
\item[\textnormal{(i)}] $\Vert F_n^i \Vert_{k,p} \leq c_{k,p}, \; 1 \leq i \leq d$;

\item[\textnormal{(ii)}] $\Vert ((  \gamma_{F_n}(t_{n-1}))_{i j})^{-1} \Vert_{p, t_{n-1}}  \leq
c_p  (\Delta_{n-1}(g))^{-1}= c_p (\Vert g  \Vert^2_{L^2([t_{n-1},t_n] \times [0,1])})^{-1}$,
\end{enumerate}
where $\gamma_{F_n}(t_{n-1})$ denotes the conditional Malliavin matrix of $F_n$ given $\mathcal{F}_{t_{n-1}}$ and $\Vert\cdot\Vert_{p, t_{n-1}}$ denotes
the conditional $L^p$-norm.
\end{lem}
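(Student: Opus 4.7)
The plan is to treat (i) by a standard induction on the derivative order $k$ using the SPDE representation from Proposition \ref{p4}, and to treat (ii) by adapting the perturbation argument of Proposition \ref{p5} to the conditional Malliavin matrix on $[t_{n-1}, t_n]$, with the quantity $t\epsilon$ that appears there replaced throughout by $\Delta_{n-1}(g)$.

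For (i), observe that $F_n^i$ is obtained from the Walsh formulation of $u_i(t,x)$ by truncating the time integrals at $t_n \leq t$. Its iterated Malliavin derivatives therefore satisfy the same linear SPDEs as in Proposition \ref{p4}, with an additional indicator $1_{\{r_j \leq t_n\}}$. Applying Burkholder's inequality for Hilbert-space valued martingales (Lemma \ref{valuedm}) together with the global boundedness of $\sigma$, $b$ and all their derivatives granted by P1, we reduce to a Gronwall-type argument that yields $\E[\|D^k(F_n^i)\|_{\mathcal{H}^{\otimes k}}^p] \leq c_{k,p}$, uniformly in $n$ and in the choice of partition.

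For (ii), I decompose the conditional Malliavin matrix as in the proof of Proposition \ref{p5}. Because $F_n^i$ is $\mathcal{F}_{t_n}$-measurable, after conditioning on $\mathcal{F}_{t_{n-1}}$ only the integration over $[t_{n-1}, t_n] \times [0,1]$ contributes nontrivially to $\gamma_{F_n}(t_{n-1})$. Using (\ref{deri}) and the elementary inequality (\ref{eq:23-2}) I obtain
\begin{equation*}
\xi^T \gamma_{F_n}(t_{n-1}) \xi \geq \tfrac{2}{3} I_1(\xi) - 2 I_2(\xi),
\end{equation*}
where
\begin{equation*}
I_1(\xi) := \int_{t_{n-1}}^{t_n}\!\!\int_0^1 G_{t-r}^2(x,v) \sum_{k=1}^d \Bigl(\sum_{i=1}^d \xi_i \sigma_{ik}(u(r,v))\Bigr)^2 dr\, dv
\end{equation*}
and $I_2(\xi)$ is the analogous quadratic form built from the correction terms $a_i(k, r, v, t, x)$ in (\ref{a}), restricted to $r \in [t_{n-1}, t_n]$. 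The uniform ellipticity P2 gives $I_1(\xi) \geq \rho^2 \Delta_{n-1}(g)$ for every unit vector $\xi$, and the same Burkholder and H\"older argument that produced (\ref{way}) in the proof of Proposition \ref{p5}, with $t\epsilon$ replaced by $\Delta_{n-1}(g)$, yields
\begin{equation*}
\E\Bigl[\sup_{\|\xi\|=1} |I_2(\xi)|^q \,\big|\, \mathcal{F}_{t_{n-1}}\Bigr] \leq c_q (\Delta_{n-1}(g))^{2q}.
\end{equation*}

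A conditional version of Proposition \ref{deter} applied with $Z := \inf_{\|\xi\|=1} \xi^T \gamma_{F_n}(t_{n-1}) \xi$, $\epsilon = \Delta_{n-1}(g)$, $\alpha_1 = \alpha_2 = 1$ and $\beta_1 = \beta_2 = 2$ then gives $\|Z^{-1}\|_{p, t_{n-1}} \leq c_p (\Delta_{n-1}(g))^{-1}$, and since $\det \gamma_{F_n}(t_{n-1}) \geq Z^d$, we deduce $\|(\det \gamma_{F_n}(t_{n-1}))^{-1}\|_{p, t_{n-1}} \leq c_p (\Delta_{n-1}(g))^{-d}$. Combining this with Cramer's rule, part (i) to control the cofactor entries (which are of conditional order $(\Delta_{n-1}(g))^{d-1}$ by Lemma \ref{(A.5)}), and the conditional Cauchy--Schwarz inequality produces the claimed bound in (ii). The main obstacle is the careful handling of conditional $L^p$-norms: one must verify that Burkholder's inequality and Proposition \ref{deter} both admit conditional analogues going through with constants independent of the partition, and that $I_2$ genuinely contributes at second order $(\Delta_{n-1}(g))^2$ rather than first order---this is the quantitative version of the vanishing of $a_i(k, r, v, t, x)$ as $r \to t$.
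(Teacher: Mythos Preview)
The paper does not actually prove this lemma: it is stated with an explicit citation to \cite[Lemma 7]{Kohatsu:03} and used as a black box, so there is no in-paper proof to compare against. Your sketch for part (i) follows the standard iteration and is fine.

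For part (ii), however, your invocation of Proposition~\ref{deter} is not correctly set up. That proposition requires a \emph{family} of lower bounds $Z \geq c\,\epsilon^{\alpha_1} - Y_{1,\epsilon}$ valid for \emph{all} $\epsilon \in (0,\epsilon_0)$, together with moment bounds $\E[|Y_{i,\epsilon}|^q] \leq C(q)\epsilon^{q\beta_i}$; only then does one conclude $\E[Z^{-p}] \leq c'\epsilon_0^{-p\alpha_1}$. You instead produce a single inequality with the fixed value ``$\epsilon = \Delta_{n-1}(g)$,'' so there is no family and the proposition does not apply. Knowing only $Z \geq c\,\Delta_{n-1}(g) - I_2$ with $I_2$ of order $(\Delta_{n-1}(g))^2$ in $L^q$ controls $\P\{Z < \tfrac{c}{2}\Delta_{n-1}(g)\}$, but gives no information on $\P\{Z < y\}$ for $y \ll \Delta_{n-1}(g)$, and hence no bound on $\E[Z^{-p}]$. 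The missing step, paralleling exactly the proof of Proposition~\ref{p5}, is to introduce an auxiliary parameter by restricting the $dr$-integration in $\xi^{\sf T}\gamma_{F_n}(t_{n-1})\xi$ to a sub-interval of $[t_{n-1},t_n]$ of relative length $\epsilon \in (0,1)$, and then let $\epsilon$ vary.

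A related point: your asserted bound $\E[\sup_{\|\xi\|=1}|I_2(\xi)|^q \mid \mathcal{F}_{t_{n-1}}] \leq c_q (\Delta_{n-1}(g))^{2q}$ is not the output of the argument leading to (\ref{way}). That argument (or Lemma~\ref{lem:7.7}) yields a bound in powers of the time-interval length and of $t - t_{n-1}$, not directly in $\Delta_{n-1}(g)$; once the auxiliary $\epsilon$ is introduced, you must check that these quantities combine to give the right order relative to the main term, which is where the ``sufficiently fine partition'' hypothesis of \cite{Kohatsu:03} enters.
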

We define
\begin{equation*}\begin{split}
u^{n-1}_i(s_1,y_1)= & \int_0^{t_{n-1}} \int_0^1
G_{s_1-s_2}(y_1,y_2) \sum_{j=1}^d \sigma_{ij}(u(s_2,y_2)) W^j(ds_1,dy_2)
\\  &+ \int_0^{t_{n-1}} \int_0^1 G_{s_1-s_2}(y_1,y_2) \,
b_i(u(s_2,y_2)) \,ds_2 dy_2, \quad  1 \leq i \leq d.
\end{split}\end{equation*}
Note that  $u^{n-1} \in \mathcal{F}_{t_{n-1}}$. As in~\cite{Kohatsu:03}, the following holds.
\begin{lem}   \label{larturo} \textnormal{~\cite[Lemma 8]{Kohatsu:03}}
Under hypothesis {\bf P1}, for $s \in [t_{n-1},t_n]$,
\begin{equation*}
\Vert u_i(s,y)-u_i^{n-1}(s,y)  \Vert_{n, p, t_{n-1}} \leq (s-t_{n-1})^{1/8}, \qquad  1 \leq i \leq d,
\end{equation*}
where  $\Vert \cdot\Vert_{n,p, t_{n-1}}$ denotes
the conditional Malliavin norm given $\mathcal{F}_{t_{n-1}}$.
\end{lem}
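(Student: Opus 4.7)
The plan is to subtract the defining integral equations for $u_i$ and $u_i^{n-1}$ to obtain the representation
\begin{equation*}
\begin{split}
u_i(s,y) - u_i^{n-1}(s,y)
&= \int_{t_{n-1}}^s \int_0^1 G_{s-r}(y,v) \sum_{j=1}^d \sigma_{ij}(u(r,v))\, W^j(dr,dv) \\
&\qquad + \int_{t_{n-1}}^s \int_0^1 G_{s-r}(y,v)\, b_i(u(r,v))\, dr\, dv,
\end{split}
\end{equation*}
in which both integrals are supported in time on the short interval $[t_{n-1}, s]$. Every small factor of $(s-t_{n-1})$ below will ultimately come from the elementary Green's kernel estimate $\int_{t_{n-1}}^s \int_0^1 G^2_{s-r}(y,v) \, dr\, dv \leq C(s-t_{n-1})^{1/2}$ provided by Lemma \ref{(A.5)}.

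For the zero-order term in $\|\cdot\|_{n,p,t_{n-1}}$, I would condition on $\mathcal{F}_{t_{n-1}}$, apply the conditional Burkholder--Davis--Gundy inequality to the stochastic integral and Cauchy--Schwarz to the Bochner integral. Since $\sigma_{ij}$ and $b_i$ are bounded by \textbf{P1}, both terms reduce to the Green's function estimate above, yielding a conditional $L^p$-bound of order $(s-t_{n-1})^{1/4}$.

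For the higher-order terms, I would apply the iterated Malliavin derivative formula of Proposition \ref{p4} to the difference; this expresses $D^{(k_1)}_{r_1,v_1}\cdots D^{(k_j)}_{r_j,v_j}[u_i(s,y) - u_i^{n-1}(s,y)]$ as a sum of a boundary term $G_{s-r_l}(y,v_l)$ multiplied by bounded derivatives of $\sigma$ and lower-order Malliavin derivatives of $u$, plus stochastic and Bochner integrals over sub-intervals of $[t_{n-1},s]\times[0,1]$. For the derivation variables $(r_l,v_l)$ falling in $[0,t_{n-1}]$, the derivative passes through $\sigma$ and $b$ by the chain rule, producing factors $D_{r_l,v_l} u(r,v)$ with $r \in [t_{n-1},s]$ whose moments are uniformly controlled by \eqref{sup}. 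Combining the Hilbert-space-valued Burkholder inequality of Lemma \ref{valuedm}, H\"older's inequality to split the products of Malliavin derivatives of $u$, the boundedness of all derivatives of $\sigma$ and $b$ furnished by \textbf{P1}, and \eqref{sup}, each order of derivation contributes a positive power of $(s-t_{n-1})$ (at worst $(s-t_{n-1})^{1/4}$), so all $n+1$ contributions to $\|\cdot\|_{n,p,t_{n-1}}$ are bounded by $C(s-t_{n-1})^{1/4}$. Choosing the partition of $[0,t]$ fine enough that $C(s-t_{n-1})^{1/8}\le 1$ then yields the announced bound $(s-t_{n-1})^{1/8}$.

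The main technical obstacle is the bookkeeping of the Fa\`a di Bruno-type expansions of the iterated Malliavin derivatives of the compositions $\sigma_{ij}(u(r,v))$ and $b_i(u(r,v))$ into sums of products that can be estimated termwise by H\"older's inequality and \eqref{sup}. This organization is carried out in detail for $d=1$ in \cite{Kohatsu:03}, and extends coordinate-wise to $d\ge 1$ in essentially the same way as the moment estimates in the proof of Proposition \ref{p5}; this is why only the statement is recorded here.
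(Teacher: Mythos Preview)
The paper does not actually prove this lemma: it is stated with the citation \cite[Lemma 8]{Kohatsu:03} and no argument is given. Your sketch is therefore not being compared against a proof in the paper but against the referenced argument in Kohatsu-Higa, and in that respect it is on target: the decomposition of $u_i(s,y)-u_i^{n-1}(s,y)$ into stochastic and Lebesgue integrals over $[t_{n-1},s]\times[0,1]$, followed by conditional Burkholder, H\"older, the Green kernel bound $\int_{t_{n-1}}^s\int_0^1 G_{s-r}^2\,dr\,dv\le C(s-t_{n-1})^{1/2}$, and control of the iterated Malliavin derivatives via Proposition \ref{p4} and \eqref{sup}, is exactly the route taken in \cite{Kohatsu:03} for $d=1$, and as you note the passage to $d\ge 1$ is coordinate-wise.

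One small inaccuracy: in Kohatsu-Higa's framework, the conditional Malliavin norm $\|\cdot\|_{n,p,t_{n-1}}$ involves derivatives with respect to the noise restricted to $[t_{n-1},T]\times[0,1]$, so the derivation variables $(r_l,v_l)$ do not ``fall in $[0,t_{n-1}]$'' as you write; they live in $[t_{n-1},s]\times[0,1]$. This actually simplifies the bookkeeping, since $u(r,v)$ for $r\le t_{n-1}$ is $\mathcal{F}_{t_{n-1}}$-measurable and hence has zero conditional Malliavin derivative. Apart from this, your outline is faithful to the cited proof.
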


The rest of the proof of Theorem~\ref{t1}(b) follows along the same
lines as in~\cite{Kohatsu:03} for $d=1$. We only sketch the
remaining main points where the fact that $d>1$ is important.
In order to obtain the expansion of $F^i_n-F^i_{n-1}$
as in~\cite[Lemma 9]{Kohatsu:03}, we proceed as follows. By the mean
value theorem,
\begin{align*} \nonumber
&F_n^i-F_{n-1}^i=  \int_{t_{n-1}}^{t_n} \int_0^1 G_{t-r}(x,v) \,
\sum_{j=1}^d \sigma_{ij}(u^{n-1}(r,v)) W^j(dr, dv) \\ 
&\qquad + \int_{t_{n-1}}^{t_n} \int_0^1 G_{t-r}(x,v) \,
b_i(u(r,v)) \,dr dv
\\ \nonumber & \qquad + \int_{t_{n-1}}^{t_n} \int_0^1
G_{t-r}(x,v) \sum_{j,l=1}^d (\int_0^1 \partial_l \sigma_{ij}(u(r,v,
\lambda)) d \lambda) (u_{l}(r,v)-u_l^{n-1}(r,v)) W^j(dr,dv),
\end{align*}
where $u(r,v, \lambda)= (1- \lambda) u(r,v) + \lambda
u^{n-1}(r,v)$. Using the terminology of~\cite{Kohatsu:03}, the first term is a process of order
$1$ and the next two terms are residues of order $1$ (as
in~\cite{Kohatsu:03}). In the next step, we write the residues of
order $1$ as the sum of processes of order $2$ and residues of order $2$ and
$3$ as follows:
\begin{align*} \nonumber
&\int_{t_{n-1}}^{t_n} \int_0^1 G_{t-r}(x,v) \, b_i(u(r,v)) \,dr dv\\ 
& \qquad = \int_{t_{n-1}}^{t_n} \int_0^1 G_{t-r}(x,v) \, b_i(u^{n-1}(r,v)) \,dr dv  \\ \nonumber
&  \qquad\qquad + \int_{t_{n-1}}^{t_n} \int_0^1 G_{t-r}(x,v) \sum_{l=1}^d
(\int_0^1 \partial_l b_i(u(r,v, \lambda)) d \lambda) (u_l(r,v)-u_l^{n-1}(r,v))  \,
dr dv
\end{align*}
and
\begin{equation*}\begin{split}
& \int_{t_{n-1}}^{t_n} \int_0^1 G_{t-r}(x,v) \sum_{j,l=1}^d
(\int_0^1 \partial_l \sigma_{ij}(u(r,v, \lambda)) d \lambda) (u_l(r,v)-u_l^{n-1}(r,v))
 W^j(dr,dv) \\ 
&=  \int_{t_{n-1}}^{t_n} \int_0^1 G_{t-r}(x,v) \sum_{j,l=1}^d  \partial_l \sigma_{ij}(u^{n-1}(r,v))  (u_l(r,v)-u_l^{n-1}(r,v))
W^j(dr,dv) \\ 
& \qquad +\int_{t_{n-1}}^{t_n} \int_0^1 G_{t-r}(x,v)
\sum_{j,l,l^{\prime}=1}^d (\int_0^1 \partial_l \partial_{l^{\prime}} \sigma_{ij}(u(r,v, \lambda)) d \lambda) \\
& \qquad \qquad \times  (u_l(r,v)-u_l^{n-1}(r,v))
(u_{l^{\prime}}(r,v)-u_{l^{\prime}}^{n-1}(r,v))   W^j(dr,dv).
\end{split}\end{equation*}
It is then clear that the remainder of the proof of~\cite[Lemma 9]{Kohatsu:03} follows for $d>1$ along the same lines as in~\cite{Kohatsu:03}, working coordinate
by coordinate.

Finally, in order to complete the proof of the proposition, it suffices to verify the hypotheses of~\cite[Theorem 5]{Kohatsu:03}. Again the proof follows as in
the proof of~\cite[Theorem 10]{Kohatsu:03},
working coordinate by coordinate. We will only sketch the proof of his (H2c), where hypothesis {\bf P2} is used:
\begin{equation*}\begin{split}
& (\Delta_{n-1}(g))^{-1} \int_{t_{n-1}}^{t_n}  \int_0^1 (G_{t-r}(x,v))^2 \Vert \sigma(u^{n-1}(r,v)) \xi \Vert^2 \, dr dv \\
& \qquad \geq \rho^2  (\Delta_{n-1}(g))^{-1}   \int_{t_{n-1}}^{t_n}  \int_0^1 (G_{t-r}(x,v))^2 \, dr dv =\rho^2 >0,
\end{split}\end{equation*}
by the definition of $g$.
This concludes the proof of Theorem \ref{t1} (b).
\end{proof}

\section{The Gaussian-type upper bound on the two-point density}\label{sec6}

Let $p_{s,y; \, t,x}(z_1,z_2)$ denote the joint
density of the $2d$-dimensional random vector
\begin{equation*}(u_1(s,y),...,u_d(s,y), u_1(t,x),...,u_d(t,x)),\end{equation*}
for $s,t \in (0,T]$, $x,y \in (0,1)$, $(s,y) \neq (t,x)$ and
$z_1,z_2 \in \mathbb{R}^d$ (the existence of this joint density will be a consequence of Theorem~\ref{3t1},
Proposition~\ref{p4} and Theorem~\ref{ga}).
\vskip 12pt
The next subsections lead to the proofs of Theorem \ref{t1}(c) and (d).

\subsection{Bounds on the increments of the Malliavin derivatives}

In this subsection, we prove an upper bound for the Sobolev norm of the derivative of the increments of our process $u$.
For this, we will need the following preliminary estimate.
\begin{lem} \label{l1}
For any $s,t \in [0,T]$, $s \leq t$, and $x,y \in [0,1]$,
\begin{equation*}
\int_0^T \int_0^1 (g(r,v))^2 \, dr dv \leq C_T
(|t-s|^{1/2}+|x-y|),
\end{equation*}
where \begin{equation*}g(r,v):=g_{t,x,s,y}(r,v)=1_{\{r \leq t\}} G_{t-r}(x,v)-1_{\{r \leq s\}}
G_{s-r}(y,v).\end{equation*}
\end{lem}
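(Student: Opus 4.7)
The plan is to split the integrand according to the temporal ranges and reduce the bound to three standard $L^2$-estimates on the Neumann heat kernel. Without loss of generality assume $s \leq t$. Write
\begin{equation*}
g(r,v) = 1_{\{s < r \leq t\}}\, G_{t-r}(x,v) + 1_{\{r \leq s\}}\bigl[G_{t-r}(x,v) - G_{s-r}(y,v)\bigr],
\end{equation*}
so that, since the two indicators have disjoint support,
\begin{equation*}
\int_0^T \!\!\int_0^1 g(r,v)^2\, dr\, dv = \underbrace{\int_s^t \!\!\int_0^1 G_{t-r}(x,v)^2\, dv\, dr}_{=: \,I_1} + \underbrace{\int_0^s \!\!\int_0^1 \bigl[G_{t-r}(x,v) - G_{s-r}(y,v)\bigr]^2\, dv\, dr}_{=: \,I_2}.
\end{equation*}

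For $I_1$, the change of variables $u = t-r$ together with the standard bound $\int_0^1 G_u(x,v)^2\, dv \leq C u^{-1/2}$ (valid uniformly in $x \in [0,1]$, $u \in (0,T]$, for the Neumann heat kernel) gives $I_1 \leq C \int_0^{t-s} u^{-1/2}\, du = 2C (t-s)^{1/2}$.

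For $I_2$, I would insert an intermediate term and use $(a+b)^2 \leq 2a^2 + 2b^2$:
\begin{equation*}
I_2 \leq 2\int_0^s \!\!\int_0^1 \bigl[G_{t-r}(x,v) - G_{s-r}(x,v)\bigr]^2\, dv\, dr + 2\int_0^s \!\!\int_0^1 \bigl[G_{s-r}(x,v) - G_{s-r}(y,v)\bigr]^2\, dv\, dr.
\end{equation*}
Each of these two terms is a classical quantity for the heat equation with Neumann boundary conditions, bounded respectively by $C_T (t-s)^{1/2}$ and $C_T |x-y|$, uniformly over $s,t \in [0,T]$ and $x,y \in [0,1]$. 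These estimates can either be quoted from the appendix lemmas recalled earlier in the paper or derived from the reflection-series representation $G_t(x,v) = \sum_{n \in \Z}[p_t(x-v+2n) + p_t(x+v+2n)]$ (where $p_t$ is the Gaussian kernel on $\R$), which reduces the two bounds to the corresponding well-known estimates for the whole-line Gaussian kernel, namely
\begin{equation*}
\int_0^T\!\!\int_\R (p_{t-r}(x,v) - p_{s-r}(x,v))^2 dv\, dr \lesssim (t-s)^{1/2}, \quad \int_0^T\!\!\int_\R (p_{t-r}(x,v) - p_{t-r}(y,v))^2 dv\, dr \lesssim |x-y|.
\end{equation*}

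Combining the bounds on $I_1$ and $I_2$ yields the claim. The only nontrivial point is verifying that the reflection series does not introduce an extra singularity; in fact the reflected tail terms are bounded uniformly on $[0,T]\times [0,1]^2$, so they contribute at most a constant multiple of $(t-s)+|x-y|$, which is dominated by $(t-s)^{1/2}+|x-y|$ on a bounded time interval. The main obstacle is therefore not conceptual but bookkeeping: keeping track of the $T$-dependence of the constants arising from the Neumann reflections so that the final $C_T$ is indeed uniform.
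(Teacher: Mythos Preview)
Your proof is correct and follows essentially the same route as the paper: the same temporal split into $I_1$ and $I_2$, the same insertion of the intermediate point $G_{s-r}(x,v)$ to separate the time and space increments, and the same three resulting terms. The only difference is cosmetic---the paper quotes \textsc{Bally, Millet, and Sanz-Sol\'e} \cite[Lemma B.1]{Bally:95} for the three estimates, whereas you sketch how to derive them from the reflection representation.
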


\begin{proof}
Using \textsc{Bally, Millet, and Sanz-Sol\'e} \cite[Lemma B.1]{Bally:95} with $\alpha=2$, we see that
\begin{equation*} \begin{split}
&\int_0^T \int_0^1 (g(r,v))^2 \, dr dv \\ 
&\leq \int_s^t \int_0^1
(G_{t-r}(x,v))^2 \, dr dv + 2 \int_0^s \int_0^1
(G_{t-r}(x,v)-G_{s-r}(x,v))^2\, dr dv \\ 
& \qquad+ 2 \int_0^s \int_0^1 (G_{s-r}(x,v)-G_{s-r}(y,v))^2\, dr
dv\\
&\leq C_T (|t-s|^{1/2}+|x-y|).
\end{split}\end{equation*}
\end{proof}

\begin{prop} \label{3p1}
Assuming {\bf P1}, for any $s,t \in [0,T]$, $s \leq t$,
$x,y \in [0,1]$, $p>1$, $m \geq 1$,
\begin{equation*}
\E \biggl[\big\Vert D^m(u_i(t,x)-u_i(s,y)) \big\Vert^p_{\mathcal{H}^{\otimes m}}
\biggr] \leq C_T  \bigl(|t-s|^{1/2}+|x-y|\bigr)^{p/2}, \;  \; i=1,...,d.
\end{equation*}
\end{prop}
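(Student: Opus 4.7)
The plan is to start from the recursive formula in Proposition~\ref{p4} applied to both $u_i(t,x)$ and $u_i(s,y)$, and to take the difference. With the convention $D^{(k)}_{r,v}(u_i(\theta,\eta))=0$ for $r>\theta$, the two Green kernels appearing in Proposition~\ref{p4} combine into the function $g=g_{t,x,s,y}$ of Lemma~\ref{l1}, yielding
\begin{align*}
D^{(k_1,\ldots,k_m)}_{\alpha_1,\ldots,\alpha_m}(u_i(t,x)-u_i(s,y)) &= \sum_{l=1}^m g(r_l,v_l)\,\mathcal{D}^{(l)}(\sigma_{i k_l}(u(r_l,v_l))) \\
&\quad + \sum_{j=1}^d \int_0^T\!\!\int_0^1 g(\theta,\eta)\,D^{(k_1,\ldots,k_m)}_{\alpha_1,\ldots,\alpha_m}(\sigma_{ij}(u(\theta,\eta)))\,W^j(d\theta,d\eta) \\
&\quad + \int_0^T\!\!\int_0^1 g(\theta,\eta)\,D^{(k_1,\ldots,k_m)}_{\alpha_1,\ldots,\alpha_m}(b_i(u(\theta,\eta)))\,d\theta d\eta,
\end{align*}
where $\mathcal{D}^{(l)}$ denotes the iterated $(m-1)$-fold Malliavin derivative obtained by omitting the $l$-th index $(k_l,\alpha_l)$.

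Each of the three terms is then estimated in $L^p(\Omega;\mathcal{H}^{\otimes m})$. For the boundary (non-integral) terms, writing the squared $\mathcal{H}^{\otimes m}$-norm of the $l$-th summand as $\int g(r,v)^2 \|\mathcal{D}^{(l)}(\sigma_{i k_l}(u(r,v)))\|_{\mathcal{H}^{\otimes(m-1)}}^2\,dr\,dv$ and applying H\"older's inequality with respect to the finite measure $g^2\,dr\,dv$ of total mass $\|g\|_{L^2}^2$, one reduces the problem to the uniform bound (via Fa\`a di Bruno and~\eqref{sup}) on $\E[\|\mathcal{D}^{(l)}(\sigma_{i k_l}(u(r,v)))\|_{\mathcal{H}^{\otimes(m-1)}}^p]$, yielding a bound of order $\|g\|_{L^2}^p$. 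For the stochastic integral term, one first invokes Minkowski's inequality (valid for $p\geq 2$) to push the $L^p(\Omega)$-norm inside the $\mathcal{H}^{\otimes m}$-integration, then applies Burkholder's inequality for Hilbert-space-valued martingales (Lemma~\ref{valuedm}) and H\"older's inequality with respect to the measure $g^2\,d\theta\,d\eta$, again obtaining a bound of order $\|g\|_{L^2}^p$ after invoking~\eqref{sup} on $\E[\|D^m(\sigma_{ij}(u(\theta,\eta)))\|_{\mathcal{H}^{\otimes m}}^p]$. The drift term is handled analogously, with the Cauchy--Schwarz inequality replacing Burkholder. Summing the three contributions and invoking Lemma~\ref{l1} to convert $\|g\|_{L^2}^p$ into $C_T(|t-s|^{1/2}+|x-y|)^{p/2}$ concludes the proof.

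The main obstacle is not conceptual but organizational. The iterated derivative $D^m_\alpha(\sigma_{ij}(u(\theta,\eta)))$ unfolds through Fa\`a di Bruno into a sum of products of lower-order Malliavin derivatives of $u$, each enjoying uniformly bounded $L^p$-moments by~\eqref{sup}; yet this combinatorial expansion must be tracked carefully alongside the multi-index structure of $\mathcal{H}^{\otimes m}$. One must also justify the Minkowski exchange between the $L^p(\Omega)$-norm and the $\mathcal{H}^{\otimes m}$-norm, and the H\"older step that promotes the exponent from $2$ to $p$; the latter produces the factor $\|g\|_{L^2}^{p-2}$, which combines with the remaining $\|g\|_{L^2}^2$ inside the integral to yield the desired $\|g\|_{L^2}^p$.
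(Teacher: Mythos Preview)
Your proposal is correct and follows essentially the same approach as the paper: decompose the difference via Proposition~\ref{p4} into boundary, stochastic-integral, and drift terms involving the function $g$, bound the stochastic term with Burkholder's inequality for Hilbert-space-valued martingales (Lemma~\ref{valuedm}) and H\"older's inequality against the measure $g^2\,d\theta\,d\eta$, bound the drift term by Cauchy--Schwarz, and conclude via~\eqref{sup} and Lemma~\ref{l1}. The only cosmetic differences are that the paper first treats $m=1$ in detail and then says $m>1$ follows ``in the same way,'' and that your mention of Minkowski is redundant once you invoke the Hilbert-space-valued Burkholder inequality directly (the paper does the latter without the former).
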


\begin{proof}
Let $m=1$. Consider the function $g(r,v)$ defined in
Lemma~\ref{l1}. Using the integral equation (\ref{deri})
satisfied by the first-order Malliavin derivative,
we find that
\begin{equation*}
\E \biggl[\big\Vert D(u_i(t,x)-u_i(s,y)) \big\Vert^p_{\mathcal{H}} \biggr] \leq C
\bigl(\E [\vert I_1\vert^{p/2}]+\E[\vert I_2\vert^{p/2}]+\E[\vert I_3\vert^{p/2}]\bigr),
\end{equation*}
where
\begin{align*} 
I_1 &= \sum_{k=1}^d \int_0^T dr \int_0^1 dv \, (g(r,v) \sigma_{ik}(u(r,v)))^2,   \\ 
I_2 &= \sum_{j,k=1}^d \int_0^T dr \int_0^1 dv \left(
 \int_0^T \int_0^1 g(\theta,\eta)
D^{(k)}_{r,v}(\sigma_{ij}(u(\theta, \eta))) W^j(d\theta, d\eta) \right)^2, \\
 I_3 &= \sum_{k=1}^d \int_0^T dr \int_0^1 dv \, \left(
\int_0^T\int_0^1 g(\theta,\eta) D^{(k)}_{r,v}(b_i(u(\theta,\eta))) d\theta d\eta \right)^2.
\end{align*}
We bound the $p/2$-moments of $I_1, I_2$ and $I_3$ separately.

By hypothesis {\bf P1} and Lemma~\ref{l1}, $E[\vert I_1\vert^{p/2}] \leq C_T  (|t-s|^{1/2}+|x-y|)^{p/2}$.
Using Burkholder's inequality for Hilbert-space-valued
martingales (Lemma \ref{valuedm}) and hypothesis {\bf P1}, we obtain
\begin{equation*}
\E[\vert I_2\vert^{p/2}] \leq C  \sum_{k=1}^d \E\biggl[ \bigg\vert \int_0^T d \theta \int_0^1 d \eta \,
(g(\theta,\eta))^2 \int_0^T dr \int_0^1 dv \, \Theta^2
\bigg\vert^{p/2} \biggr],
\end{equation*}
where $\Theta:=\sum_{l=1}^d
D^{(k)}_{r,v}(u_l(\theta, \eta))$. From H\"older's inequality with respect to the measure
$(g(\theta, \eta))^2 d \theta d \eta$, we see that this is bounded above by
\begin{align*} \nonumber
 & C  \biggl( \int_0^T \int_0^1 (g(\theta, \eta))^2 d \theta
d \eta \biggr)^{\frac{p}{2}-1} \\ 
&\qquad \times \textnormal{sup}_{(\theta, \eta) \in
[0,T] \times[0,1]} \sum_{k=1}^d \E \biggr[ \bigg\vert \int_0^T d \theta \int_0^1 d \eta (g(\theta, \eta))^2 \int_0^T dr \int_0^1 dv \, \Theta^2
\bigg\vert^{p/2} \biggr] \\ \nonumber
&\leq  C_T  (|t-s|^{1/2}+|x-y|)^{p/2},
\end{align*}
thanks to (\ref{sup}) and
Lemma~\ref{l1}.

We next derive a similar bound for $I_3$. By the Cauchy--Schwarz inequality,
\begin{equation*}
\E[\vert I_3\vert^{p/2}]\leq C_T  \sum_{k=1}^d \E\biggl[ \bigg\vert \int_0^T d \theta \int_0^1 d \eta \,
(g(\theta,\eta))^2 \int_0^T dr \int_0^1 dv \, \Theta^2
\bigg\vert^{p/2} \biggr].
\end{equation*}
From here on, the $p/2$-moment of $I_3$ is estimated as was that of $I_2$, and this yields
 $\E[\vert I_3\vert^{p/2}]\leq  C_T  (|t-s|^{1/2}+|x-y|)^{p/2}$. This proves the desired result for $m=1$.
The case $m>1$ follows using the stochastic differential equation satisfied by the iterated Malliavin derivatives
(Proposition~\ref{p4}), H\"older's and Burkholder's inequalities, hypothesis {\bf P1}, (\ref{sup}) and
Lemma~\ref{l1} in the same way as we did for $m=1$, to obtain the desired bound.
\end{proof}

\subsection{Study of the Malliavin matrix}

For $s,t \in [0,T]$, $s \leq t$, and $x,y \in [0,1]$
consider the $2d$-dimensional random vector
\begin{equation}  \label{vector}
Z:=(u_1(s,y),...,u_d(s,y), u_1(t,x)-u_1(s,y),...,u_d(t,x)-u_d(s,y)).
\end{equation}
Let $\gamma_Z$ the
Malliavin matrix of $Z$. Note that $\gamma_Z=((\gamma_Z)_{m,l})_{m,l=1,...,2d}$ is a symmetric $2d \times 2d$
random matrix with four $d \times d$ blocs of the form
\begin{equation*}
\gamma_Z =\left(
\begin{array}{ccc}
\gamma_Z^{(1)} & \vdots & \gamma_Z^{(2)}\\
\cdots & \vdots & \cdots \\
\gamma_Z^{(3)}& \vdots & \gamma_Z^{(4)}\\
\end{array}
\right),
\end{equation*}
where
\begin{equation*} \begin{split}
\gamma_Z^{(1)} &= (\langle D(u_i(s,y)), D(u_j(s,y)) \rangle_{\mathcal{H}})_{i,j=1,...,d},\\
\gamma_Z^{(2)} &= (\langle D(u_i(s,y)), D(u_j(t,x)-u_j(s,y)) \rangle_{\mathcal{H}})_{i,j=1,...,d},\\ 
\gamma_Z^{(3)} &= (\langle D(u_i(t,x)-u_i(s,y)), D(u_j(s,y))  \rangle_{\mathcal{H}})_{i,j=1,...,d},\\
\gamma_Z^{(4)} &= (\langle D(u_i(t,x)-u_i(s,y)),  D(u_j(t,x)-u_j(s,y))  \rangle_{\mathcal{H}})_{i,j=1,...,d}.
\end{split}\end{equation*}
We let {\bf (1)} denote the set of indices $\{1,...,d\} \times \{1,...,d\}$, {\bf (2)} the set $\{1,...,d\} \times \{d+1,...,2d\}$,
{\bf (3)} the set $\{d+1,...,2d\} \times \{1,...,d\}$ and {\bf (4)} the set $\{d+1,...,2d\} \times \{d+1,...,2d\}$.

\vskip 12pt
The following theorem gives an estimate on the Sobolev norm of the entries of the inverse of the matrix $\gamma_Z$, which depends on the position of the entry in the matrix.
\begin{thm} \label{ga}
Fix $\eta, T>0$. Assume {\bf P1} and {\bf P2}. Let $I$ and $J$ be two compact intervals as in Theorem \ref{t1}.
\begin{enumerate}
\item[\textnormal{(a)}] For any $(s,y) \in I \times J$, $(t,x) \in I \times J$, $s \leq t$, $(s,y) \neq (t,x)$, $k \geq 0$, $p>1$,
\begin{equation*}
\Vert (\gamma_Z^{-1})_{m,l} \Vert_{k,p}\leq
\begin{cases}
c_{k,p,\eta,T}(|t-s|^{1/2}+|x-y|)^{-d \eta}&
\text{if \; $(m,l) \in {\bf (1)}$}, \\
c_{k,p,\eta,T} (|t-s|^{1/2}+|x-y|)^{-1/2-d \eta} &
\text{if \; $(m,l) \in {\bf (2)}$ or ${\bf (3)}$},  \\
c_{k,p,\eta,T} (|t-s|^{1/2}+|x-y|)^{-1-d \eta}& \text{if \; $(m,l) \in {\bf (4)}$}.
\end{cases}
\end{equation*}
\item[\textnormal{(b)}] For any $s=t \in (0,T]$, $(t,y) \in I \times J$, $(t,x) \in I \times J$ , $x \neq y$, $k \geq 0$, $p>1$,
\begin{equation*}
\Vert (\gamma_Z^{-1})_{m,l} \Vert_{k,p}\leq
\begin{cases}
c_{k,p,T}&
\text{if \; $(m,l) \in {\bf (1)}$ }, \\
c_{k,p,T} |x-y|^{-1/2} &
\text{if \; $(m,l) \in {\bf (2)}$ or ${\bf (3)}$},  \\
c_{k,p,T} |x-y|^{-1}& \text{if \; $(m,l) \in {\bf (4)}$}.
\end{cases}
\end{equation*}
\end{enumerate}
(Note the slight improvements in the exponents in case \textnormal{(b)} where $s=t$.)
\end{thm}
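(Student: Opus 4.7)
My plan is to use Cramer's rule to write
\begin{equation*}
(\gamma_Z^{-1})_{m,l} = \frac{(A_Z)_{m,l}}{\det \gamma_Z},
\end{equation*}
so that the estimate reduces to (i) $L^p$-moment bounds for the cofactors $(A_Z)_{m,l}$ of the right block-dependent order, and (ii) a negative-moment bound for $\det\gamma_Z$ of order $\Delta^{-d}$ (up to a small $\Delta^{-d\eta}$ loss), where $\Delta := |t-s|^{1/2}+|x-y|$. The first ingredient will be the content of Proposition \ref{3p2} and the second of Proposition \ref{3p3}. From Propositions \ref{p4} and \ref{3p1} together with Cauchy--Schwarz, the entries of $\gamma_Z$ have $L^p$-order $1$ in block {\bf (1)}, order $\Delta^{1/2}$ in blocks {\bf (2)} and {\bf (3)}, and order $\Delta$ in block {\bf (4)}. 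A Leibniz expansion of the $(2d-1)\times(2d-1)$ minor, combined with a counting of how many "big" rows must be matched with "small" columns, shows that every permutation contributing to the $(m,l)$-cofactor has the same leading $\Delta$-exponent, equal to $d$, $d-1/2$, or $d-1$ according to whether $(m,l)\in {\bf (1)}$, ${\bf (2)}\cup{\bf (3)}$, or ${\bf (4)}$. Dividing by $\det\gamma_Z\sim\Delta^d$ then produces the orders $1$, $\Delta^{-1/2}$, $\Delta^{-1}$ appearing in the statement, with the $\Delta^{-d\eta}$ slack arising from the Cauchy--Schwarz splitting $\Vert(\gamma_Z^{-1})_{m,l}\Vert_{0,p} \le \Vert(A_Z)_{m,l}\Vert_{0,2p}\cdot\Vert(\det\gamma_Z)^{-1}\Vert_{0,2p}$.

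The principal technical obstacle is Proposition \ref{3p3}. I will adapt the perturbation argument from the proof of Proposition \ref{p5}, now applied to the $2d$-dimensional quadratic form
\begin{equation*}
\xi^T\gamma_Z\xi = \bigg\Vert \sum_{i=1}^d \xi_i^{(1)} D(u_i(s,y)) + \sum_{i=1}^d \xi_i^{(2)} D(u_i(t,x)-u_i(s,y)) \bigg\Vert_{\mathcal{H}}^2,
\end{equation*}
where $\xi = (\xi^{(1)},\xi^{(2)})$ runs over the unit sphere in $\mathbb{R}^{2d}$. The strategy is to produce $d$ "large" eigendirections along which the form is bounded below by a constant, and to show that on the orthogonal complement the form is of order at least $\Delta$. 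When $\Vert\xi^{(1)}\Vert \geq 1/2$, I restrict the $(r,v)$ integration to a window $[s(1-\epsilon),s]\times[0,1]$ and use inequality (\ref{eq:23-2}) to separate the $u(s,y)$ contribution from the remainder; hypothesis {\bf P2} and Lemma \ref{(A.3)} then yield a bound of order $\epsilon^{1/2}\Vert\xi^{(1)}\Vert^2$, exactly as in Proposition \ref{p5}. When $\Vert\xi^{(2)}\Vert \geq 1/2$, I instead restrict the integration to a parabolic window of $\Delta$-size comparable to $\Delta$, substitute (\ref{deri}) for the two Malliavin derivatives, and use the kernel $g_{t,x,s,y}$ of Lemma \ref{l1} together with hypothesis {\bf P2} to extract a main term of order $\Delta\Vert\xi^{(2)}\Vert^2$; the residue is controlled by Burkholder's inequality (Lemma \ref{valuedm}), Proposition \ref{3p1} and Lemma \ref{l1}, exactly as the residue $I_2$ was controlled in the proof of Proposition \ref{p5}. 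Two applications of Proposition \ref{deter}, one in each $\xi$-regime, then yield $\E[(\det\gamma_Z)^{-p}] \leq c\,\Delta^{-dp(1+d\eta)}$.

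For part (b) with $s=t$, the same scheme applies but Lemma \ref{l1} specializes to $\int_0^T\!\int_0^1 g_{t,x,t,y}(r,v)^2\,dr\,dv \leq C_T|x-y|$, so $\Delta$ is replaced by $|x-y|$ throughout and, crucially, there is no initial $\int_s^t\!\int_0^1 G_{t-r}(x,\eta)^2\,dr\,d\eta$ boundary term to absorb; this removes the $\epsilon$-loss in the perturbation and allows one to take $\eta=0$. Finally, to upgrade the zero-order bounds to full Sobolev norms $\Vert\cdot\Vert_{k,p}$, I will iterate the identity (\ref{eulalia}) inside (\ref{defigamma}), following the scheme of the proof of Proposition \ref{normHc}: each Malliavin derivative of $(\gamma_Z^{-1})_{m,l}$ generates two further factors of $\gamma_Z^{-1}$ (at some new block positions $(i,k)$ and $(l,j)$) together with a Malliavin derivative of an entry of $\gamma_Z$ at position $(k,\ell)$. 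The former are estimated by reapplying the zero-order bound to the new block positions, while the latter respect the same block-wise $\Delta$-orders by Propositions \ref{p4} and \ref{3p1}; combining these via H\"older's inequality for Malliavin norms preserves the announced bounds at the cost of an arbitrarily small additional power of $\Delta^{-d\eta}$.
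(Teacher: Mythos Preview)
Your overall architecture is exactly that of the paper: Cramer's rule, Proposition~\ref{3p2} for cofactors, Proposition~\ref{3p3} for $(\det\gamma_Z)^{-1}$, and then the block version of~\eqref{eulalia} together with Proposition~\ref{dgamma} to pass to higher~$k$. The higher-derivative part of your sketch is fine and matches the paper's induction.

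The gap is in your outline for Proposition~\ref{3p3}. Two points:

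\textbf{(i) Counting the large eigendirections.} You state that ``the strategy is to produce $d$ `large' eigendirections'' but do not say how. This is a genuine step (Lemma~\ref{pertur} in the paper): one must show that for an orthonormal eigenbasis $\xi^1,\dots,\xi^{2d}$ written as in~\eqref{base}, at least $d$ of the $\alpha_i$ are bounded below by a universal $\alpha_0>0$. Without this, the split ``$\|\xi^{(1)}\|\ge 1/2$ versus $\|\xi^{(2)}\|\ge 1/2$'' does not translate into a bound on the product of the $2d$ Rayleigh quotients.

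\textbf{(ii) The lower bound on the small eigenvalue.} You propose to ``use the kernel $g_{t,x,s,y}$ of Lemma~\ref{l1} together with hypothesis~{\bf P2} to extract a main term of order $\Delta\|\xi^{(2)}\|^2$.'' But Lemma~\ref{l1} is an \emph{upper} bound $\int g^2\le C_T\Delta$; there is no companion lower bound $\int g^2\ge c\Delta$ in the paper, and in fact such a bound is not how the argument proceeds. The paper never isolates the increment kernel~$g$ for the lower bound. Instead, for the small-eigenvalue estimate (Proposition~\ref{pr:small-eigen:1}) one restricts to two disjoint time windows, $[s-\e,s]$ and $[(t-\e)\vee s,t]$, keeps the \emph{individual} kernels $G_{s-r}(y,v)$ and $G_{t-r}(x,v)$, and then runs a case analysis: Case~1 ($|x-y|^2\le t-s$) splits the unit sphere according to whether $\|\mu\|\ge \e^{\eta/2}$ or not and applies Proposition~\ref{deter} with $\e_0=t-s$; Case~2 ($|x-y|^2\ge t-s$) requires a further Sub-Case~A/B split, the semigroup identity~\eqref{square}, and the $\Psi(\alpha)$ trick of~\eqref{subcaseA:B_1^3} to kill the cross term. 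The $\eta$-loss in part~(a) comes precisely from the $\e^{\eta/2}$ threshold in Case~1/Sub-Case~B, and part~(b) works with $\eta=0$ because when $s=t$ Sub-Case~B is vacuous---not because of the absence of the boundary integral you mention. Your single-window argument with $g$ does not give the needed lower bound and would also not explain where the $\eta$ enters. Similarly, the large-eigenvalue estimate (Proposition~\ref{large}) is not a straightforward replay of Proposition~\ref{p5}: it, too, splits into $t-s\le\e$ versus $t-s>\e$ and uses a further threshold $\alpha\lessgtr\sqrt{1-\e^\eta}$.
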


The proof of this theorem is deferred to Section \ref{p63}. We assume it for the moment and complete the proof of Theorem \ref{t1}(c) and (d).

\subsection{Proof of Theorem \ref{t1}(c) and (d)}\label{sec63}

Fix two compact intervals $I$ and $J$ as in Theorem \ref{t1}. Let $(s,y), (t,x) \in I \times J$, $s \leq t$, $(s,y) \neq (t,x)$, and
$z_1,z_2 \in \mathbb{R}^d$. Let $Z$ be as in (\ref{vector}) and let $p_Z$ be the density of $Z$. Then
\begin{equation*}
p_{s,y; \, t,x}(z_1,z_2)=p_Z(z_1, z_1-z_2).
\end{equation*}
Apply Corollary~\ref{no} with $\sigma=\{ i \in \{1,...,d\} : z_1^i
-z_2^i \geq 0 \}$ and H\"older's inequality to see that
\begin{equation*}
p_Z(z_1, z_1-z_2) \leq \prod_{i=1}^d \biggl(\P
\biggl\{|u_i(t,x)-u_i(s,y) | > |z_1^i -z_2^i | \biggr\}
\biggr)^{\frac{1}{2d}} \times \Vert H_{(1,...,2d)}(Z,1) \Vert_{0,2}.
\end{equation*}
Therefore, in order to prove the desired results (c) and (d) of Theorem \ref{t1}, it suffices to prove that:
\begin{equation} \label{equa1}
\prod_{i=1}^d \biggl(\P
\biggl\{|u_i(t,x)-u_i(s,y)| > |z_1^i-z_2^i|
\biggr\}\biggr)^{\frac{1}{2d}} \leq c\exp \biggl( -\frac{\Vert
z_1-z_2 \Vert^2}{c_T (|t-s|^{1/2}+|x-y|)}\biggr), 
\end{equation}
\begin{equation} \label{equa2b}
\Vert H_{(1,...,2d)}(Z,1) \Vert_{0,2} \leq c_T
(|t-s|^{1/2}+|x-y|)^{-(d+\eta)/2}, 
 \end{equation}
 and if $s=t$, then
\begin{equation} \label{equa3}
 \Vert H_{(1,...,2d)}(Z,1) \Vert_{0,2} \leq c_T |x-y|^{-d/2}.
\end{equation}
\vskip 12pt

\begin{proof}[Proof of (\ref{equa1})]
Let $\tilde{u}$ denote the solution
of (\ref{e2}) for $b \equiv 0$. Consider the continuous one-parameter
martingale $(M_u=(M_u^1,...,M_u^d), \, 0 \leq u \leq t)$ defined by
\begin{equation*}
M^i_u=
\begin{cases}
\int_0^u \int_0^1 (G_{t-r}(x,v)-G_{s-r}(y,v)) \,
\sum_{j=1}^d \sigma_{ij}(\tilde{u}(r,v)) \,  W^j(dr, dv)  &
\text{if $0 \leq u \leq s $}, \\
& \\ \int_0^s \int_0^1
(G_{t-r}(x,v)-G_{s-r}(y,v)) \, \sum_{j=1}^d \sigma_{ij}(\tilde{u}(r,v)) \,  W^j(dr, dv) &  \\
\qquad \qquad + \int_s^u \int_0^1 G_{t-r}(x,v) \,
\sum_{j=1}^d \sigma_{ij}(\tilde{u}(r,v)) \,  W^j(dr, dv) & \text{if $s \leq u \leq
t$},
\end{cases}
\end{equation*}
for all $i=1,...,d$, with respect to the filtration
($\mathcal{F}_u$, $0 \leq u \leq t$). Notice that
\begin{equation*}
M_0=0, \; \;  M_t=\tilde{u}(t,x)-\tilde{u}(s,y).
\end{equation*}
Moreover, by hypothesis {\bf P1} and Lemma~\ref{l1},
\begin{equation*} \begin{split}
\langle M^i \rangle_t&=\int_0^s \int_0^1
(G_{t-r}(x,v)-G_{s-r}(y,v))^2 \, \sum_{j=1}^d (\sigma_{ij}(\tilde{u}(r,v)))^2 \, dr
dv \\ 
& \qquad \qquad \qquad  +\int_s^t \int_0^1 (G_{t-r}(x,v))^2 \,\sum_{j=1}^d (\sigma_{ij}(\tilde{u}(r,v)))^2 \, dr
dv \\ 
&\leq C\int_0^T \int_0^1 (g(r,v))^2\, dr dv \\
& \leq C_T (|t-s|^{1/2}+|x-y|).
\end{split}\end{equation*}
By the exponential martingale inequality \textsc{Nualart} \cite[A.5]{Nualart:95},
\begin{equation} \label{b0}
\P \biggl\{|\tilde{u}^i(t,x)-\tilde{u}^i(s,y) | > |z_1^i
-z_2^i | \biggr\} \leq 2 \exp \biggl( -\frac{| z^i_1-z^i_2 |^2}{C_T
(|t-s|^{1/2}+|x-y|)}\biggr).
\end{equation}

We will now treat the case $b \not\equiv 0$ using Girsanov's
theorem. Consider the random variable
\begin{equation*} \begin{split}
&L_t=\text{exp}\biggl(- \int_0^t \int_0^1 \sigma^{-1}(u(r,v)) \,
b(u(r,v)) \cdot W(dr,dv) \\
& \qquad \qquad \qquad -\frac{1}{2} \int_0^t \int_0^1 \Vert
\sigma^{-1}(u(r,v)) \, b(u(r,v)) \Vert^2 \, dr dv \biggr).
\end{split}\end{equation*} 
The following Girsanov's theorem holds.
\begin{thm} \label{3t8bis} \textnormal{~\cite[Prop.1.6]{Nualart:94}}
$\E[L_t]=1$, and if $\tilde{P}$ denotes the
probability measure on $(\Omega, {\mathcal{F}})$ defined by
\begin{equation*}
\frac{d\tilde{\P}}{d \P} (\omega)= L_{t} (\omega),
\end{equation*}
then ${\tilde{W}}(t,x)=W(t,x)+\int_{0}^t \int_0^x
\sigma^{-1}(u(r,v)) \, b(u(r,v))\, dr dv$ is a standard Brownian
sheet under $\tilde{\P}$.
\end{thm}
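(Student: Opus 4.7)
The plan is to verify Novikov's criterion (which will be trivial under the hypotheses) and then invoke the standard Girsanov theorem for the Brownian sheet as formulated in \cite{Nualart:94}.

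For the first assertion $\E[L_t]=1$, I would combine \textbf{P1} and \textbf{P2} to bound the drift uniformly. By \textbf{P2}, $\Vert\sigma(x)\xi\Vert\geq \rho$ for every unit vector $\xi$, so $\sigma(x)$ is invertible with $\Vert\sigma(x)^{-1}\Vert\leq 1/\rho$. Combined with the boundedness of $b$ from \textbf{P1}, this yields a \emph{deterministic} bound $\Vert\sigma^{-1}(u(r,v))\,b(u(r,v))\Vert\leq c_0$ for all $(r,v)\in[0,t]\times[0,1]$ and all $\omega$. Consequently,
\[
\frac{1}{2}\int_0^t\!\!\int_0^1 \Vert \sigma^{-1}(u(r,v))\, b(u(r,v))\Vert^2\,dr\,dv \;\leq\; \frac{c_0^2\,t}{2},
\]
so Novikov's criterion for the two-parameter stochastic integral is trivially satisfied, giving $\E[L_t]=1$ and showing that $L_t$ is a strictly positive mean-one martingale.

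For the second assertion, I would invoke the standard Girsanov theorem for space-time white noise. Concretely, for each $h\in L^2([0,t]\times[0,1];\mathbb{R}^d)$, the process $v\mapsto\int_0^v\!\int_0^1 h(r,\eta)\cdot\tilde{W}(dr,d\eta)$ rewrites as a $\P$-stochastic integral against $W$ plus a deterministic drift that precisely cancels the Girsanov drift induced by the change of measure. This shows that it is a continuous $\tilde{\P}$-martingale with unchanged quadratic variation $\int_0^v\!\int_0^1\Vert h(r,\eta)\Vert^2\,dr\,d\eta$, and L\'evy's characterization identifies it as a $\tilde{\P}$-Brownian motion. Applying this for all such $h$ yields that $\tilde{W}$ is a standard Brownian sheet under $\tilde{\P}$.

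The main point one usually has to handle in a Girsanov-type statement is verifying Novikov's condition. Here, however, the joint use of \textbf{P1} (bounded coefficients) and \textbf{P2} (uniform ellipticity) makes the exponent of $L_t$ deterministically bounded, so Novikov is essentially free. Thus the proof should reduce to a direct invocation of the standard Brownian sheet Girsanov theorem of \cite{Nualart:94}, with no serious obstacle beyond writing down these bounds carefully.
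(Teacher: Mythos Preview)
Your proposal is correct. Note, however, that the paper does not give its own proof of this statement at all: it is simply quoted from \cite[Prop.~1.6]{Nualart:94} as a known result, so there is nothing to compare against beyond observing that your Novikov verification (immediate from the deterministic bound $\Vert\sigma^{-1}(u)b(u)\Vert\le c_0$ coming from {\bf P1} and {\bf P2}) is exactly the check needed to justify applying that cited Girsanov theorem.
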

Consequently, the law of $u$ under $\tilde{\P}$ coincides
with the law of $\tilde{u}$ under $\P$. Consider now the
random variable
\begin{equation*} \begin{split}
&J_t=\text{exp}\biggl(- \int_0^t \int_0^1
\sigma^{-1}(\tilde{u}(r,v)) \, b(\tilde{u}(r,v)) \cdot W(dr,dv) \\
&  \qquad \qquad \qquad+\frac{1}{2} \int_0^t \int_0^1 \Vert
\sigma^{-1}(\tilde{u}(r,v)) \, b(\tilde{u}(r,v)) \Vert^2 \, dr dv
\biggr).
\end{split}\end{equation*} 
Then, by Theorem \ref{3t8bis}, the Cauchy-Schwarz inequality and (\ref{b0}),
\begin{equation*} \begin{split}
&\P \biggl\{|u_i(t,x)-u_i(s,y) | > |z_1^i -z_2^i |
\biggr\} = \E_{\tilde{\P}} \biggl[
1_{\{|u^i(t,x)-u^i(s,y) |
|z_1^i -z_2^i |  \}} L_t^{-1} \biggr] \\ 
&   \qquad \qquad \qquad= \E_{\P} \biggl[
1_{\{|\tilde{u}^i(t,x)-\tilde{u}^i(s,y) |
|z_1^i -z_2^i |  \}} J_t^{-1} \biggr] \\ 
&   \qquad \qquad \qquad \leq \biggl(\P \biggl\{
|\tilde{u}^i(t,x)-\tilde{u}^i(s,y)|
|z_1^i - z_2^i | \biggr\} \biggr)^{1/2} \biggl( \E_{\P}[J_t^{-2}] \biggr)^{1/2} \\ 
&\qquad \qquad \qquad \leq  2 \exp \biggl( -\frac{| z^i_1-z^i_2 |^2}{C_T
(|t-s|^{1/2}+|x-y|)}\biggr)  \biggl( \E_{\P}
[J_t^{-2}] \biggr)^{1/2}.
\end{split}\end{equation*} 
Now, hypothesis {\bf P1} and {\bf P2} give
\begin{equation*} \begin{split}
\E_{\P} [J_t^{-2}]&\leq \E_{\P}
\biggl[ \text{exp}\biggl(\int_0^t \int_0^1 2 \,
\sigma^{-1}(\tilde{u}(r,v)) \,
b(\tilde{u}(r,v)) \cdot W(dr,dv) \\ 
& \qquad  \qquad \qquad-\frac{1}{2} \int_0^t \int_0^1 4 \, \Vert
\sigma^{-1}(\tilde{u}(r,v)) \, b(\tilde{u}(r,v)) \Vert^2 \, dr dv
\biggr) \\
& \qquad\qquad \qquad\times \text{exp}\biggl(\int_0^t \int_0^1
  \Vert \sigma^{-1}(\tilde{u}(r,v)) \, b(\tilde{u}(r,v)) \Vert^2
\, dr dv
\biggr)\biggr] \\
& \leq C,
\end{split}\end{equation*} 
since the second exponential is bounded and the first is an exponential martingale.

Therefore, we have proved that
\begin{equation*}
\P \biggl\{|u_i(t,x)-u_i(s,y) | > |z_1^i -z_2^i |
\biggr\}\leq C \exp \biggl( -\frac{| z^i_1-z^i_2 |^2}{C_T
(|t-s|^{1/2}+|x-y|)}\biggr),
\end{equation*}
from which we conclude that
\begin{equation*}
\prod_{i=1}^d \biggl(\P \biggl\{|u_i(t,x)-u_i(s,y) | >
|z_1^i -z_2^i | \biggr\} \biggr)^{\frac{1}{2d} } \leq C \exp
\biggl( -\frac{\Vert z_1-z_2 \Vert^2}{C_T
(|t-s|^{1/2}+|x-y|)}\biggr).
\end{equation*}
This proves (\ref{equa1}).
\end{proof}

\vskip 12pt
\begin{proof}[Proof of (\ref{equa2b})]
As in (\ref{iteration}), using the continuity of the Skorohod integral $\delta$ and H\"older's inequality for Malliavin norms, we obtain
\begin{equation*} \begin{split}
&\Vert H_{(1,...,2d)}(Z,1) \Vert_{0,2}  \leq C \Vert
H_{(1,...,2d-1)}(Z,1) \Vert_{1,4}  \biggl( \sum_{j=1}^{d} \Vert
(\gamma_{Z}^{-1})_{2d, j} \Vert_{1,8} \, \Vert D(Z^j)
\Vert_{1,8}
\\
&\qquad \qquad \qquad  \qquad \qquad \qquad \qquad +
\sum_{j=d+1}^{2d} \Vert (\gamma_{Z}^{-1})_{2d, j} \Vert_{1,8}
\, \Vert D(Z^j) \Vert_{1,8}\biggr).
\end{split}\end{equation*} 
Notice that the entries of $\gamma_{Z}^{-1}$ that appear in this expression belong to sets {\bf (3)} and {\bf (4)}
of indices, as defined before Theorem~\ref{ga}. From Theorem~\ref{ga}(a) and Propositions~\ref{p4} and~\ref{3p1}, we find that this is bounded above by
\begin{equation*}
C_T \Vert H_{(1,...,2d-1)}(Z,1) \Vert_{1,4} \biggl( \sum_{j=1}^{d} (|t-s|^{1/2}+|x-y|)^{-\frac{1}{2}-d \eta} +\sum_{j=d+1}^{2d} (|t-s|^{1/2}+|x-y|)^{-1-d \eta+\frac{1}{2}} \biggr),
\end{equation*}
that is, by
\begin{equation*}
C_T \Vert H_{(1,...,2d-1)}(Z,1) \Vert_{1,4} (|t-s|^{1/2}+|x-y|)^{-1/2-d \eta}.
\end{equation*}
Iterating this procedure $d$ times (during which we only encounter coefficients $(\gamma_Z^{-1})_{m,l}$ for $(m,l)$ in blocs ${\bf (3)}$ and ${\bf (4)}$, cf. Theorem~\ref{ga}(a)), we get, for some integers $m_0,k_0>0$,
\begin{equation*}
\Vert H_{(1,...,2d)}(Z,1) \Vert_{0,2}  \leq C_T \Vert
H_{(1,...,d)}(Z,1) \Vert_{m_0,k_0} (|t-s|^{1/2}+|x-y|)^{-d/2-d^2 \eta}.
\end{equation*}
Again, using the continuity of $\delta$ and H\"older's inequality
for the Malliavin norms, we obtain
\begin{equation*} \begin{split}
&\Vert H_{(1,...,d)}(Z,1) \Vert_{m,k}  \leq C \Vert
H_{(1,...,d-1)}(Z,1) \Vert_{m_1,k_1}  \biggl( \sum_{j=1}^{d} \Vert
(\gamma_{Z}^{-1})_{d, j} \Vert_{m_2,k_2} \, \Vert D(Z^j)
\Vert_{m_3,k_3}
\\
& \qquad \qquad \qquad \qquad \qquad \qquad \qquad \qquad +
\sum_{j=d+1}^{2d} \Vert (\gamma_{Z}^{-1})_{d, j}
\Vert_{m_4,k_4} \, \Vert D(Z^j) \Vert_{m_5,k_5}\biggr),
\end{split}\end{equation*} 
for some integers $m_i,k_i>0$, $i=1,...,5$. This time, the entries of $\gamma_Z^{-1}$ that appear in this expression come from the sets ${\bf (1)}$ and ${\bf (2)}$ of indices. 
We appeal again to
Theorem~\ref{ga}(a) and Propositions~\ref{p4} and~\ref{3p1} to get
\begin{equation*}
\Vert H_{(1,...,d)}(Z,1) \Vert_{m,k}  \leq C_T \Vert
H_{(1,...,d-1)}(Z,1) \Vert_{m_1,k_1}(|t-s|^{1/2}+|x-y|)^{-d \eta}.
\end{equation*}
Finally, iterating this procedure $d$ times (during which we encounter coefficients $(\gamma_Z^{-1})_{m,l}$ for $(m,l)$
in blocs ${\bf (1)}$ and ${\bf (2)}$ only, cf. Theorem~\ref{ga}(a)), and choosing $\eta^{\prime}=4d^2\eta$, we conclude that
\begin{equation*}
\Vert H_{(1,...,2d)}(Z,1) \Vert_{0,2}  \leq C_T
(|t-s|^{1/2}+|x-y|)^{-(d+\eta^{\prime})/2},
\end{equation*}
which proves (\ref{equa2b}) and concludes the proof of
Theorem \ref{t1}(c).
\end{proof}
\vskip 12pt

\begin{proof}[Proof of (\ref{equa3})]
In order to prove (\ref{equa3}), we proceed exactly along the same lines as in the proof of (\ref{equa2b}) but we appeal to Theorem \ref{ga}(b). This concludes the proof of
Theorem \ref{t1}(d).
\end{proof}

\subsection{Proof of Theorem \ref{ga}} \label{p63}

Let $Z$ as in (\ref{vector}).
Since the inverse of the matrix $\gamma_Z$ is the inverse of its determinant multiplied by its cofactor matrix, we examine these two factors separately.
\begin{prop} \label{3p2}
Fix $T>0$ and let $I$ and $J$ be compact intervals as in Theorem \ref{t1}. Assuming {\bf P1}, for any $(s,y), (t,x) \in I \times J$, $(s,y) \neq (t,x)$, $p>1$,
\begin{equation*}
\E[|(A_Z)_{m,l}|^p]^{1/p}\leq
\begin{cases}
c_{p,T} (|t-s|^{1/2}+|x-y|)^{d}&
\text{if \; $(m,l) \in {\bf (1)} $}, \\
c_{p,T} (|t-s|^{1/2}+|x-y|)^{d-\frac{1}{2}}&
\text{if \; $(m,l) \in {\bf (2)}$ or ${\bf (3)} $},  \\
c_{p,T} (|t-s|^{1/2}+|x-y|)^{d-1}& \text{if \; $(m,l) \in {\bf (4)}$},
\end{cases}
\end{equation*}
where $A_Z$ denotes the cofactor matrix of $\gamma_Z$.
\end{prop}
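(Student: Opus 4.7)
The plan is to exploit the block structure of $\gamma_Z$: by Proposition~\ref{3p1} together with the Cauchy--Schwarz inequality applied first in $\mathcal{H}$ and then in $L^p(\Omega)$, each entry of $\gamma_Z$ has an $L^p$-norm whose size depends only on which of the four blocks the entry belongs to. Writing $\Delta := |t-s|^{1/2}+|x-y|$, I would first verify:
\begin{equation*}
\|(\gamma_Z)_{i,j}\|_{0,p} \leq c_{p,T} \cdot
\begin{cases}
1 & \text{if } (i,j)\in\mathbf{(1)},\\
\Delta^{1/2} & \text{if } (i,j)\in\mathbf{(2)} \cup \mathbf{(3)},\\
\Delta & \text{if } (i,j)\in\mathbf{(4)},
\end{cases}
\end{equation*}
using \eqref{sup} (for block $\mathbf{(1)}$), the Cauchy--Schwarz inequality combined with Proposition~\ref{3p1} with $m=1$ (for blocks $\mathbf{(2)}$, $\mathbf{(3)}$), and Proposition~\ref{3p1} applied to both factors (for block $\mathbf{(4)}$).

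Next, I would express the cofactor as $(A_Z)_{m,l} = (-1)^{m+l}\det M^{(m,l)}$, where $M^{(m,l)}$ is the $(2d-1)\times(2d-1)$ minor obtained by deleting row $m$ and column $l$ (symmetry of $\gamma_Z$ lets us ignore the transpose convention in the adjugate). Expand the determinant by the Leibniz formula and apply the generalized H\"older inequality: for each permutation $\sigma$ of $\{1,\ldots,2d-1\}$,
\begin{equation*}
\Bigl\| \prod_{i=1}^{2d-1} M^{(m,l)}_{i,\sigma(i)} \Bigr\|_{0,p} \leq \prod_{i=1}^{2d-1} \bigl\| M^{(m,l)}_{i,\sigma(i)} \bigr\|_{0,p(2d-1)}.
\end{equation*}
Since the number of permutations is $(2d-1)!$, a constant, it suffices to show that this upper bound is of the claimed order in $\Delta$ uniformly in $\sigma$.

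The main (purely combinatorial) point is that the total exponent of $\Delta$ contributed by any permutation depends only on $(m,l)$, not on $\sigma$. Letting $a$ denote the number of entries of $\sigma$ falling in the top-left block of $M^{(m,l)}$, and writing out the sizes of the four sub-blocks of $M^{(m,l)}$ in each case, one finds:
\begin{itemize}
\item For $(m,l)\in\mathbf{(1)}$, the blocks of $M^{(m,l)}$ have sizes $(d-1)\times(d-1)$, $(d-1)\times d$, $d\times(d-1)$, $d\times d$; the number of $\sigma$-entries in the four blocks is $a$, $d-1-a$, $d-1-a$, $a+1$, giving total exponent $\tfrac{1}{2}(d-1-a)+\tfrac{1}{2}(d-1-a)+(a+1)=d$.
\item For $(m,l)\in\mathbf{(2)}\cup\mathbf{(3)}$, an analogous count yields counts $a$, $d-1-a$, $d-a$, $a$ (or its reflection), giving exponent $d-\tfrac{1}{2}$.
\item For $(m,l)\in\mathbf{(4)}$, the counts are $a$, $d-a$, $d-a$, $a-1$, giving exponent $d-1$.
\end{itemize}
Combining these with the bound on each permutation's contribution yields Proposition~\ref{3p2}. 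The only real work is bookkeeping: there is no hard estimate beyond what Proposition~\ref{3p1} already provides, and no analytic obstacle—the delicate part of the proof of Theorem~\ref{ga} is instead the lower bound on $\det \gamma_Z$, which is the subject of the subsequent Proposition~\ref{3p3}.
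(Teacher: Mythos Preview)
Your proposal is correct and follows essentially the same approach as the paper: both expand the cofactor determinant via the Leibniz formula, count for each permutation how many entries fall into each of the four blocks of the minor (the paper uses $k$ where you use $a$), observe that the resulting exponent of $\Delta$ is independent of the permutation, and then apply H\"older/Cauchy--Schwarz together with \eqref{sup} and Proposition~\ref{3p1}. The only cosmetic difference is that the paper first bounds $|(A_Z)_{m,l}|$ pathwise by products of $\|D(u(s,y))\|_{\mathcal{H}}$ and $\|D(u(t,x)-u(s,y))\|_{\mathcal{H}}$ before taking moments, whereas you bound the $L^p$-norm of each matrix entry directly; the combinatorics and the final estimates are identical.
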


\begin{proof}
We consider the four different cases.
\vskip 12pt

\noindent $\bullet$ If $(m,l) \in {\bf (1)} $, we claim that
\begin{equation} \begin{split} \label{adjointmatrix} 
\vert (A_Z)_{m,l}  \vert &\leq  C \sum_{k=0}^{d-1} \biggl\{ \Vert D(u(s,y)) \Vert^{2k}_{\mathcal{H}} \times
\Vert D(u(t,x)-u(s,y)) \Vert^{2(d-1-k)}_{\mathcal{H}} \\
&\qquad \qquad \times \Vert D(u(s,y)) \Vert^{2(d-1-k)}_{\mathcal{H}} \times  \Vert D(u(t,x)-u(s,y)) \Vert^{2(k+1)}_{\mathcal{H}} \biggr\} .
\end{split}\end{equation} 
Indeed, let $A_Z^{m,l}=(a^{m,l}_{\bar{m},\bar{l}})_{\bar{m},\bar{l}=1,...,2d-1}$ be the $(2d-1) \times (2d-1)$-matrix obtained by removing
from $\gamma_Z$ its row $m$ and column $l$. Then
\begin{equation*}
(A_Z)_{m,l} =\text{det} \, ((A_Z)^{m,l}) = \sum_{\pi \text{ permutation of }  (1,...,2d-1)} a_{1, \pi(1)}^{m,l} \cdots    a_{2d-1, \pi(2d-1)}^{m,l}.
\end{equation*}
Each term of this sum contains one entry from each row and column of $A_Z^{m,l}$. If there are $k$ entries taken from bloc ${\bf (1)}$ of $\gamma_Z$, these occupy $k$ rows
and columns of $A_Z^{m,l}$. Therefore, $d-1-k$ entries must come from the $d-1$ remaining rows of bloc ${\bf (2)}$, and the same number from the columns of bloc ${\bf (3)}$.
Finally, there remain $k+1$ entries to be taken from bloc ${\bf (4)}$. Therefore,
\begin{equation*} \begin{split}
&\vert (A_Z)_{m, l} \vert  \leq  C  \sum_{k=0}^{d-1} \sum
\biggl\{ (\text{product of } k \text{ entries from } {\bf (1)} ) \\ 
&\qquad \times  (\text{product of } d-1-k \text{ entries from } {\bf (2)} )\\ 
&\qquad  \times  (\text{product of } d-1-k \text{ entries from } {\bf (3)} ) \times (\text{product of } k+1 \text{ entries from } {\bf (4)} ) \biggr\}.
\end{split}\end{equation*} 
Adding all the terms and using the particular form of these terms establishes (\ref{adjointmatrix}).

   Regrouping the various factors in (\ref{adjointmatrix}), applying the Cauchy-Schwarz inequality and using (\ref{sup})
and Proposition~\ref{3p1}, we obtain
\begin{equation*} \begin{split}
\E \biggl[ \vert (A_Z)_{m,l} \vert^p \biggr]  &\leq C
\sum_{k=0}^{d} \E \biggl[\Vert D(u(s,y))
\Vert^{2(d-1)p}_{\mathcal{H}}
\times  \Vert D(u(t,x)-u(s,y)) \Vert^{2dp}_{\mathcal{H}} \biggr] \\
&\leq C_T (|t-s|^{1/2}+|x-y|)^{dp}.
\end{split}\end{equation*} 
\vskip 12pt

\noindent $\bullet$ If $(m,l) \in {\bf (2)}$ or $(m,l) \in {\bf (3)}$, then using the same arguments as above, we obtain
\begin{equation*} \begin{split}
&\vert (A_Z)_{m,l} \vert \\
& \leq  C \sum_{k=0}^{d-1} \sum \biggl\{ \Vert D(u(s,y)) \Vert^{2(d-1-k)}_{\mathcal{H}} \times  \Vert D(u(s,y)) \Vert^{k}_{\mathcal{H}} \times
\Vert D(u(t,x)-u(s,y)) \Vert^{k}_{\mathcal{H}} \\ 
& \qquad \times \Vert D(u(s,y)) \Vert^{k+1}_{\mathcal{H}} \times  \Vert D(u(t,x)-u(s,y)) \Vert^{k+1}_{\mathcal{H}}
\times  \Vert D(u(t,x)-u(s,y)) \Vert^{2(d-1-k)}_{\mathcal{H}}\biggr\} \\
& \leq C \sum_{k=0}^{d-1} \biggl\{ \Vert D(u(s,y)) \Vert^{2d-1}_{\mathcal{H}} \times \Vert D(u(t,x)-u(s,y)) \Vert^{2d-1}_{\mathcal{H}}\biggr\},
\end{split}\end{equation*} 
from which we conclude, using (\ref{sup}) and Proposition~\ref{3p1}, that
\begin{equation*}
\E \biggl[ \vert (A_Z)_{m,l} \vert^{p} \biggr] \leq C_T
(|t-s|^{1/2}+|x-y|)^{(d-\frac{1}{2})p}.
\end{equation*}
\vskip 12pt

\noindent $\bullet$ If $(i,j) \in {\bf (4)}$, we obtain
\begin{equation*} \begin{split}
\vert (A_Z)_{m,l} \vert &\leq  C \sum_{k=0}^{d-1} \sum \biggl\{ \Vert D(u(s,y)) \Vert^{2(k+1)}_{\mathcal{H}} \times
\Vert D(u(s,y)) \Vert^{2(d-1-k)}_{\mathcal{H}} \\ 
&\qquad \times \Vert D(u(t,x)- D(u(s,y))) \Vert^{2(d-1-k)}_{\mathcal{H}} \times \Vert D(u(t,x)- D(u(s,y))) \Vert^{2k}_{\mathcal{H}} \biggr\} \\
&  \leq C \sum_{k=0}^{d-1} \biggl\{ \Vert D(u(s,y)) \Vert^{2d}_{\mathcal{H}} \times \Vert D(u(t,x)-u(s,y)) \Vert^{2d-2}_{\mathcal{H}}\biggr\},
\end{split}\end{equation*} 
from which we conclude that
\begin{equation*}
\E \biggl[ \vert (A_Z)_{m,l} \vert^{p} \biggr] \leq C_T
(|t-s|^{1/2}+|x-y|)^{(d-1)p}.
\end{equation*}

   This concludes the proof of the proposition.
\end{proof}

\begin{prop} \label{3p3}
Fix $\eta,T>0$. Assume {\bf P1} and {\bf P2}. Let $I$ and $J$ be compact intervals as in Theorem \ref{t1}.
\begin{enumerate}
\item[\textnormal{(a)}] There exists
$C$ depending on $T$ and $\eta$ such that for any $(s,y), (t,x) \in I\times J$, $(s,y) \neq (t,x)$, $p>1$,
\begin{equation} \label{bruce}
\E \big[\big(\textnormal{det} \, \gamma_Z\big)^{-p}\big]^{1/p} \leq C
(|t-s|^{1/2}+|x-y|)^{-d(1+\eta)}.
\end{equation}
\item[\textnormal{(b)}] There exists
$C$ only depending on $T$ such that for any $s=t \in I$,
$x,y \in J$, $x \neq y$, $p>1$,
\begin{equation*}
\E \big[\big(\textnormal{det} \, \gamma_Z \big)^{-p} \big]^{1/p} \leq
C (|x-y|)^{-d}.
\end{equation*}
\end{enumerate}
\end{prop}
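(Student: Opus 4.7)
My plan is to exploit the block structure of $\gamma_Z$ through a Schur complement decomposition, reducing to a known estimate plus one new estimate that isolates the ``small eigenvalues.'' Writing
\[
\gamma_Z = \begin{pmatrix} A & B \\ B^T & C \end{pmatrix},\qquad A := \gamma_Z^{(1)},\ B := \gamma_Z^{(2)},\ C := \gamma_Z^{(4)},
\]
I observe that $A$ is exactly the one-point Malliavin matrix of $u(s,y)$, so Proposition~\ref{p5} gives $\E[(\det A)^{-q}]^{1/q} \le c_q$ uniformly in $(s,y) \in I \times J$. The Schur identity $\det \gamma_Z = \det A \cdot \det S$ with $S := C - B^T A^{-1} B$, together with H\"older's inequality, reduces both (a) and (b) to a bound on $\E[(\det S)^{-q}]^{1/q}$ of the required order. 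This is the ``small'' block, and the Schur complement captures precisely the $d$ small eigenvalues announced in the introduction.

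To estimate $\det S$, I would use $\det S \ge \lambda_{\min}(S)^d$ with $\lambda_{\min}(S) := \inf_{\|\eta\|=1}\eta^T S\eta$. The Schur complement of a Gram matrix obeys the variational identity (after substituting $\gamma := \beta - \eta$ in the natural formula $\inf_\beta\|\sum \eta_i w_i + \sum \beta_j v_j\|^2$, with $v_j := Du_j(s,y)$, $w_i := D(u_i(t,x)-u_i(s,y))$)
\[
\eta^T S\eta = \inf_{\gamma \in \R^d}\, \Bigl\|\sum_{i=1}^d \eta_i\, Du_i(t,x) + \sum_{j=1}^d \gamma_j\, Du_j(s,y)\Bigr\|^2_{\mathcal{H}}.
\]
With $\tau := |t-s|^{1/2}+|x-y|$, restrict the $\mathcal{H}$-norm to $R := [t - c_0\tau^2, t] \times [0,1]$ for a small fixed $c_0$. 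Using the principal-part expansion (\ref{deri})--(\ref{a}) for $Du_i(t,x)$ and $Du_j(s,y)$ (the latter vanishing on $r > s$), the problem reduces, up to a remainder $\mathrm{err}(\eta)$, to minimizing an explicit deterministic quadratic form whose associated $2{\times}2$ Gram matrix has entries $\int_R G_{t-r}(x,v)^2$, $\int_R G_{t-r}(x,v)G_{s-r}(y,v)\mathbf{1}_{r\le s}$ and $\int_R G_{s-r}(y,v)^2\mathbf{1}_{r\le s}$. A direct Gaussian-overlap calculation, based on the semigroup identity $\int_0^1 G_a(x,v)G_b(y,v)\,dv \asymp G_{a+b}(x,y)$ and the choice of $c_0$, shows that the Gram determinant is of order $\tau \cdot \int_R G_{s-r}(y,v)^2\mathbf{1}_{r \le s}\,dr\,dv$ uniformly across the regimes $(t-s)^{1/2} \gtrless |x-y|$, yielding $\eta^T S \eta \ge c\rho^2\tau\|\eta\|^2 - \mathrm{err}(\eta)$ by hypothesis \textbf{P2}. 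The moments $\E[|\mathrm{err}(\eta)|^q]^{1/q} \le C\tau^{1+\nu}$ for some $\nu > 0$ are obtained by iterating the Burkholder/H\"older estimates that appear in the proof of Proposition~\ref{p5}, together with Propositions~\ref{p4} and~\ref{3p1}.

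Applying Proposition~\ref{deter} with $\e_0 := c_0 \tau^2$, $\alpha_1 = \alpha_2 = 1/2$ and $\beta_i > 1/2$ then gives $\E[\lambda_{\min}(S)^{-q}]^{1/q} \le C\tau^{-(1+\eta/d)}$, whence case (a) by $\det S \ge \lambda_{\min}(S)^d$. Case (b), with $s=t$ and $x\neq y$, runs identically on $R' := [t - c_0(x-y)^2, t] \times [0,1]$; here $\mathbf{1}_{r \le s} \equiv 1$ on $R'$, so only one scale is active and the slack exponent can be eliminated.

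The principal obstacles are: (i) making the Gaussian-overlap Gram-determinant lower bound precise and uniform across the regimes $(t-s)^{1/2} \gtrless |x-y|$, in particular showing that $G_{t-r}(x,v)$ and $G_{s-r}(y,v)\mathbf{1}_{r\le s}$ cannot be too nearly collinear in $L^2(R)$ for the chosen $c_0$; (ii) passing from the bound at fixed $\eta$ to the uniform-in-$\eta$ bound on $\lambda_{\min}(S)$, via either an $\e$-net argument on $\{\|\eta\|=1\}$ or an iterated direction-by-direction Schur reduction, which is the source of the slack exponent $\eta$ in case (a); and (iii) verifying that the remainder $\mathrm{err}(\eta)$ genuinely satisfies the slack condition $\beta_i > \alpha_i$ of Proposition~\ref{deter}, requiring a delicate iteration of the Malliavin-calculus estimates already used in Proposition~\ref{p5}.
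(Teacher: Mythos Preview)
Your Schur decomposition $\det\gamma_Z=\det A\cdot\det S$ with $A=\gamma_{u(s,y)}$ is a legitimate alternative to the paper's eigenvector splitting. Bounding $\E[(\det A)^{-q}]$ directly by Proposition~\ref{p5} is cleaner than the paper's Proposition~\ref{large}, which carries out a separate perturbation analysis to show that the $d$ ``large'' directions contribute a factor of order~$1$; so on that half of the problem you genuinely simplify.

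The gap is in your lower bound on $\lambda_{\min}(S)$. The variational identity
\[
\eta^{\sf T}S\eta \;=\; \inf_{\gamma\in\R^d}\Bigl\|\,\sum_i\eta_i\,Du_i(t,x)+\sum_j\gamma_j\,Du_j(s,y)\Bigr\|_{\mathcal H}^2
\]
is correct, but the infimum runs over \emph{unbounded} $\gamma$. When you expand via \eqref{deri}--\eqref{a} on the window $R$, the remainder still contains the term $\sum_j\gamma_j\,a_j(k,r,v,s,y)$, which is linear in $\gamma$; hence $\sup_\gamma\|\text{remainder}\|^2=\infty$, and the standard splitting $\|P+Q\|^2\ge\tfrac23\|P\|^2-2\|Q\|^2$ followed by $\inf_\gamma(\cdot)\ge\tfrac23\inf_\gamma\|P\|^2-2\sup_\gamma\|Q\|^2$ is vacuous. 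You cannot declare the error to be a function $\mathrm{err}(\eta)$ of $\eta$ alone without first controlling the minimiser $\gamma^\ast=-A^{-1}B\eta$, and that drags $A^{-1}$ back into the estimate. This is exactly why the paper works instead with $\inf_{\|\xi\|=1}\xi^{\sf T}\gamma_Z\xi$ over the \emph{compact} unit sphere in $\R^{2d}$ and, in Proposition~\ref{pr:small-eigen:1}, splits on $\|\mu\|\gtrless\e^{\eta/2}$: the small-$\|\mu\|$ regime there is precisely your large-$\|\gamma\|$ regime, and that split---not an $\e$-net over directions $\eta\in S^{d-1}$ as in your obstacle~(ii)---is the true source of the loss in the exponent. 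Since in fact $\lambda_{\min}(S)\ge c\,\lambda_{\min}(\gamma_Z)$ (compare the two Gram matrices under the unimodular change of basis between $(u(s,y),u(t,x))$ and $(u(s,y),u(t,x)-u(s,y))$), bounding $\det S$ from below reduces to Proposition~\ref{pr:small-eigen:1} anyway, and its case analysis is not bypassed.
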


Assuming this proposition, we will be able to conclude the proof of Theorem~\ref{ga}, after establishing the following estimate on the derivative of the Malliavin matrix.
\begin{prop} \label{dgamma}
Fix $T>0$. Let $I$ and $J$ be compact intervals as in Theorem \ref{t1}. Assuming {\bf P1}, for any $(s,y),(t,x) \in I \times J$, $(s,y) \neq (t,x)$, $p>1$ and $k \geq 1$,
\begin{equation*}
\E \big[\Vert D^k(\gamma_Z)_{m,l} \Vert_{\mathcal{H}^{\otimes k}} ^{p}\big]^{1/p}\leq
\begin{cases}
c_{k,p,T} &
\text{if \; $(m,l) \in {\bf (1)} $}, \\
c_{k,p,T} (|t-s|^{1/2}+|x-y|)^{1/2}&
\text{if \; $(m,l) \in {\bf (2)}$ or ${\bf (3)} $},  \\
c_{k,p,T} (|t-s|^{1/2}+|x-y|)& \text{if \; $(m,l) \in {\bf (4)}$}.
\end{cases}
\end{equation*}
\end{prop}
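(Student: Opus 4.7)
The plan is to expand $D^k$ applied to each entry of $\gamma_Z$ by an iterated Leibniz rule and then control every summand using the $L^p$-moment estimates from Proposition~\ref{p4} (especially \eqref{sup}) and Proposition~\ref{3p1}. First I would observe that every entry $(\gamma_Z)_{m,l}$ has the form $\langle DF, DG\rangle_{\mathcal{H}}$, where the nature of $F$ and $G$ depends on the bloc: for bloc $\mathbf{(1)}$, both $F$ and $G$ are components of $u(s,y)$; for blocs $\mathbf{(2)}$ and $\mathbf{(3)}$, one factor is a component of $u(s,y)$ and the other a component of $u(t,x) - u(s,y)$; for bloc $\mathbf{(4)}$, both factors are components of $u(t,x) - u(s,y)$.

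The second step is the Leibniz expansion. Starting from the elementary identity $D\langle DF, DG\rangle_{\mathcal{H}} = \langle D^2 F, DG\rangle_{\mathcal{H}} + \langle DF, D^2 G\rangle_{\mathcal{H}}$, induction on $k$ yields
\begin{equation*}
D^k \langle DF, DG\rangle_{\mathcal{H}} = \sum_{j=0}^{k} \binom{k}{j} \bigl\langle D^{j+1} F , D^{k-j+1} G \bigr\rangle_{\mathcal{H}},
\end{equation*}
where the inner product on the right-hand side denotes the partial contraction of the original $\mathcal{H}$-index, leaving an element of $\mathcal{H}^{\otimes k}$. Cauchy--Schwarz in $\mathcal{H}$ then gives
\begin{equation*}
\bigl\| D^k (\gamma_Z)_{m,l} \bigr\|_{\mathcal{H}^{\otimes k}} \leq C_k \sum_{j=0}^{k} \bigl\| D^{j+1} F \bigr\|_{\mathcal{H}^{\otimes (j+1)}} \, \bigl\| D^{k-j+1} G \bigr\|_{\mathcal{H}^{\otimes (k-j+1)}},
\end{equation*}
and a further application of Cauchy--Schwarz in $L^{2p}(\Omega)$ bounds the $L^p$-norm of each summand by the product of an $L^{2p}$-norm of $\|D^{j+1}F\|$ and an $L^{2p}$-norm of $\|D^{k-j+1}G\|$.

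The final step is to insert the moment bounds. By \eqref{sup}, any factor of the form $\E[\|D^n u_i(s,y)\|^{2p}_{\mathcal{H}^{\otimes n}}]^{1/(2p)}$ is uniformly bounded by a constant depending only on $T, n, p$. By Proposition~\ref{3p1}, any factor of the form $\E[\|D^n (u_i(t,x) - u_i(s,y))\|^{2p}_{\mathcal{H}^{\otimes n}}]^{1/(2p)}$ is at most $C_{T,n,p}(|t-s|^{1/2} + |x-y|)^{1/2}$. Counting the number of increment-type factors produced in each bloc — zero for $\mathbf{(1)}$, one for $\mathbf{(2)}$ and $\mathbf{(3)}$, two for $\mathbf{(4)}$ — immediately yields the exponents $0$, $1/2$ and $1$ claimed in the proposition.

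I do not anticipate any serious obstacle in this argument: the entire proof is bookkeeping once the Leibniz expansion is written out and the estimates from Proposition~\ref{p4} and Proposition~\ref{3p1} are invoked. The only mildly delicate point is justifying the interchange of $D$ and the $\mathcal{H}$-contraction in the Leibniz formula, which is standard in Malliavin calculus and follows from the fact that $D$ acts as a derivation on smooth cylindrical functionals, extended by closure to $\mathbb{D}^{\infty}$.
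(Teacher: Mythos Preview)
Your proposal is correct and matches the paper's proof essentially line for line: the paper also expands $D^k\langle DF,DG\rangle_{\mathcal H}$ via the Leibniz rule into a binomial sum of partial contractions, applies Cauchy--Schwarz, and then inserts \eqref{sup} and Proposition~\ref{3p1} bloc by bloc to read off the exponents $0$, $1/2$, and $1$. The only cosmetic difference is that the paper writes out the three blocs separately rather than unifying them with the single $\langle DF,DG\rangle$ notation you use.
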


\begin{proof} We consider the four different blocs.
\begin{enumerate}
\item[$\bullet$] If $(m,l) \in {\bf (4)}$, proceeding as in \textsc{Dalang and Nualart} \cite[p.2131]{Dalang:04} and appealing to
Proposition~\ref{3p1} twice, we obtain
\begin{equation*} \begin{split}
& \E \big[\big\Vert D^k (\gamma_{Z})_{m,l} \big\Vert^{p}_{\mathcal{H}^{\otimes k}} \big]  \\ 
&=\E \big[\big\Vert D^k\big( \int_0^T dr \int_0^1 dv \, D_{r,v}(u_m(t,x)-u_m(s,y)) \cdot
 D_{r,v}(u_l(t,x)-u_l(s,y)) \big) \big\Vert^p_{\mathcal{H}^{\otimes k}}\big] \\ 
&\leq(k+1)^{p-1} \sum_{j=0}^k \left(\begin{array}{c} \!\!k\!\! \\  \!\!j\! \! \end{array} \right)^p
\E \big[\big\Vert \int_0^T dr \int_0^1 dv \,  D^j D_{r,v}(u_m(t,x)-u_m(s,y)) \\ 
&\qquad \qquad \qquad \qquad \qquad \qquad\cdot
D^{k-j} D_{r,v}(u_l(t,x)-u_l(s,y))  \big\Vert^{p}_{\mathcal{H}^{\otimes k}}\big] \\ 
&\leq \tilde{C}_T (k+1)^{p-1} \sum_{j=0}^k \left(\begin{array}{c} \!\!k\!\! \\  \!\!j\! \! \end{array} \right)^p
\big\{ \big(E\big[ \big\Vert D^j D(u_m(t,x)-u_m(s,y)) \big\Vert^{2p}_{\mathcal{H}^{\otimes (j+1)}}  \big]\big)^{1/2} \\
&\qquad \qquad \qquad \qquad \qquad \qquad\times
\big(\E\big[\big\Vert D^{k-j} D(u_l(t,x)-u_l(s,y)) \big\Vert^{2p}_{\mathcal{H}^{\otimes (k-j+1)}} \big]\big)^{1/2} \big\}  \\
& \leq C_T (|t-s|^{1/2}+|x-y|)^p.
\end{split}\end{equation*} 

\item[$\bullet$] If $(m,l) \in {\bf (2)}$ or $(m,l) \in {\bf (3)}$, proceeding as above and appealing to (\ref{sup})
and Proposition~\ref{3p1}, we get
\begin{equation*} \begin{split}
& \E \big[\big\Vert D^k (\gamma_{Z})_{m,l} \big\Vert^{p}_{\mathcal{H}^{\otimes k}} \big]  \\ 
&\leq  \tilde{C}_T (k+1)^{p-1} \sum_{j=0}^k \left(\begin{array}{c} \!\!k\!\! \\  \!\!j\! \! \end{array} \right)^p
\big\{ \big( \E\big[ \big\Vert D^j D(u_m(t,x)-u_m(s,y)) \big\Vert^{2p}_{\mathcal{H}^{\otimes (j+1)}}  \big]\big)^{1/2} \\ 
&\qquad \qquad \qquad \qquad \qquad \qquad\times
\big( \E\big[\big\Vert D^{k-j} D(u_l(s,y)) \big\Vert^{2p}_{\mathcal{H}^{\otimes (k-j+1)}} \big]\big)^{1/2} \big\}  \\
& \leq C_T (|t-s|^{1/2}+|x-y|)^{p/2}.
\end{split}\end{equation*}
\item[$\bullet$] If $(m,l) \in {\bf (1)}$, using (\ref{sup}), we obtain
\begin{equation*} \begin{split}
& \E \big[\big\Vert D^k (\gamma_{Z})_{m,l} \big\Vert^{p}_{\mathcal{H}^{\otimes k}} \big]  \\ 
&\leq  \tilde{C}_T (k+1)^{p-1} \sum_{j=0}^k \left(\begin{array}{c} \!\!k\!\! \\  \!\!j\! \! \end{array} \right)^p
\big\{ \big( \E\big[ \big\Vert D^j D(u_m(s,y)) \big\Vert^{2p}_{\mathcal{H}^{\otimes (j+1)}}  \big]\big)^{1/2} \\
&\qquad \qquad \qquad \qquad \qquad \qquad\times
\big( \E \big[\big\Vert D^{k-j} D(u_l(s,y)) \big\Vert^{2p}_{\mathcal{H}^{\otimes (k-j+1)}} \big]\big)^{1/2} \big\}  \\
& \leq C_T.
\end{split}\end{equation*}
\end{enumerate}
\end{proof}

\vskip 12pt

\begin{proof}[Proof of Theorem \ref{ga}]
When $k=0$, the result follows directly using the fact that the inverse of a matrix is the inverse of its determinant multiplied by its cofactor matrix and the estimates of Propositions~\ref{3p2} and \ref{3p3}.

For $k \geq1$, we shall establish the following two properties.
\begin{enumerate}
\item[\textnormal{(a)}] For any  $(s,y),(t,x) \in I \times J$, $(s,y) \neq (t,x)$, $s \leq t$,  $k \geq 1$ and $p>1$,
\begin{equation*}
\E [\Vert D^k (\gamma_Z^{-1})_{m,l} \Vert_{\mathcal{H}^{\otimes k}}^p]^{1/p} \leq
\begin{cases}
c_{k,p,\eta,T} (|t-s|^{1/2}+|x-y|)^{-d \eta}&
\text{if  $(m,l) \in {\bf (1)}$ }, \\
c_{k,p,\eta,T} (|t-s|^{1/2}+|x-y|)^{-1/2-d \eta} &
\text{if  $(m,l) \in {\bf (2)}$, ${\bf (3)}$},  \\
c_{k,p,\eta,T} (|t-s|^{1/2}+|x-y|)^{-1-d \eta}& \text{if  $(m,l) \in {\bf (4)}$}.
\end{cases}
\end{equation*}
\item[\textnormal{(b)}] For any $s=t \in I$, $x,y \in J$, $x \neq y$, $k \geq 1$ and $p>1$,
\begin{equation*}
\E [\Vert D^k (\gamma_Z^{-1})_{m,l} \Vert_{\mathcal{H}^{\otimes k}}^p]^{1/p} \leq
\begin{cases}
c_{k,p,T}&
\text{if \; $(m,l) \in {\bf (1)}$ }, \\
c_{k,p,T} |x-y|^{-1/2} &
\text{if \; $(m,l) \in {\bf (2)}$ or ${\bf (3)}$},  \\
c_{k,p,T} |x-y|^{-1}& \text{if \; $(m,l) \in {\bf (4)}$}.
\end{cases}
\end{equation*}
\end{enumerate}
Since
\begin{equation*}
\Vert (\gamma_Z^{-1})_{m,l} \Vert_{k,p}=\bigg\{ 
\E [\vert (\gamma_Z^{-1})_{m,l} \vert^p]+\sum_{j=1}^k \E [\Vert D^j (\gamma_Z^{-1})_{m,l} \Vert_{\mathcal{H}^{\otimes j}}^p]\bigg\}^{1/p},
\end{equation*}
(a) and (b) prove the theorem.

We now prove (a) and (b).
When $k=1$, we will use (\ref{eulalia}) written as a matrix product:
\begin{equation} \label{productmatrix}
D(\gamma_{Z}^{-1})=\gamma_{Z}^{-1} D (\gamma_{Z}) \gamma_{Z}^{-1}.
\end{equation}
Writing (\ref{productmatrix}) in bloc product matrix notation with blocs
{\bf(1)}, {\bf(2)}, {\bf(3)} and {\bf(4)}, we get that
\begin{equation*} \begin{split}
D ( (\gamma_{Z}^{-1})^{(1)} )&=(\gamma_{Z}^{-1})^{(1)} D (\gamma_{Z}^{(1)}) (\gamma_{Z}^{-1})^{(1)}+
(\gamma_{Z}^{-1})^{(1)} D (\gamma_{Z}^{(2)}) (\gamma_{Z}^{-1})^{(3)} \\ 
& \qquad +(\gamma_{Z}^{-1})^{(2)} D (\gamma_{Z}^{(3)}) (\gamma_{Z}^{-1})^{(1)}+(\gamma_{Z}^{-1})^{(2)} D (\gamma_{Z}^{(4)}) (\gamma_{Z}^{-1})^{(3)}, \\ 
D ( (\gamma_{Z}^{-1})^{(2)} )&=(\gamma_{Z}^{-1})^{(1)} D (\gamma_{Z}^{(1)}) (\gamma_{Z}^{-1})^{(2)}+
(\gamma_{Z}^{-1})^{(1)} D (\gamma_{Z}^{(2)}) (\gamma_{Z}^{-1})^{(4)} \\ 
& \qquad +(\gamma_{Z}^{-1})^{(2)} D (\gamma_{Z}^{(3)}) (\gamma_{Z}^{-1})^{(2)}+(\gamma_{Z}^{-1})^{(2)} D (\gamma_{Z}^{(4)}) (\gamma_{Z}^{-1})^{(4)}, \\ 
D ( (\gamma_{Z}^{-1})^{(3)} )&=(\gamma_{Z}^{-1})^{(3)} D (\gamma_{Z}^{(1)}) (\gamma_{Z}^{-1})^{(1)}+
(\gamma_{Z}^{-1})^{(3)} D (\gamma_{Z}^{(2)}) (\gamma_{Z}^{-1})^{(3)} \\ 
& \qquad +(\gamma_{Z}^{-1})^{(4)} D (\gamma_{Z}^{(3)}) (\gamma_{Z}^{-1})^{(1)}+(\gamma_{Z}^{-1})^{(4)} D (\gamma_{Z}^{(4)}) (\gamma_{Z}^{-1})^{(3)}, \\ 
D ( (\gamma_{Z}^{-1})^{(4)} )&=(\gamma_{Z}^{-1})^{(3)} D (\gamma_{Z}^{(1)}) (\gamma_{Z}^{-1})^{(2)}+
(\gamma_{Z}^{-1})^{(3)} D (\gamma_{Z}^{(2)}) (\gamma_{Z}^{-1})^{(4)} \\
& \qquad +(\gamma_{Z}^{-1})^{(4)} D (\gamma_{Z}^{(3)}) (\gamma_{Z}^{-1})^{(2)}+(\gamma_{Z}^{-1})^{(4)} D (\gamma_{Z}^{(4)}) (\gamma_{Z}^{-1})^{(4)}.
\end{split}\end{equation*}
It now suffices to apply H\"older's inequality to each block and
use the estimates of the case $k=0$ and Proposition \ref{dgamma}
to obtain the desired result for $k=1$. For instance, for $(m,l)\in {\bf(1)}$,
\begin{equation*} \begin{split}
&\E \biggl[ \big\Vert\big( (\gamma_{Z}^{-1})^{(2)} D (\gamma_{Z}^{(4)})
(\gamma_{Z}^{-1})^{(3)} \big)_{m,l} \big\Vert^p_{\mathcal{H}} \biggr]^{1/p} \\ 
& \leq \sup_{m_1,l_1} \E \biggl[\big\vert \big( (\gamma_{Z}^{-1})^{(2)} \big)_{m_1,l_1} \big\vert^{2p}
\biggr]^{1/(2p)} \sup_{m_2,l_2} \E\biggl[\big\Vert \big( D (\gamma_{Z}^{(4)}) \big)_{m_2,l_2}
\big\Vert^{4p}_{\mathcal{H}} \biggr]^{1/(4p)} \\
& \qquad \qquad \qquad \times \sup_{m_3,l_3}  \E \biggl[\big\vert
\big( (\gamma_{Z}^{-1})^{(3)} \big)_{m_3,l_3} \big\vert^{4p} \biggr]^{1/(4p)} \\
&\leq c \, (|t-s|^{1/2}+|x-y|)^{-\frac{1}{2}-d \eta +1 -\frac{1}{2}-d \eta }=c \, (|t-s|^{1/2}+|x-y|)^{-2d \eta}.
\end{split}\end{equation*}

For $k\geq 1$, in order to calculate $D^{k+1}(\gamma_Z^{(\cdot)})$, we will need to compute
$D^k(\gamma_{Z}^{-1} D (\gamma_{Z}) \gamma_{Z}^{-1})$.
For bloc numbers $i_1,i_2,i_3 \in \{1,2,3,4\}$ and $k \geq 1$, we have
\begin{equation*} \begin{split}
&D^k\big((\gamma_{Z}^{-1})^{(i_1)} D (\gamma_{Z}^{(i_2)}) (\gamma_{Z}^{-1})^{(i_3)}\big) \\ 
& =
\sum_{\begin{array}{c} \scriptstyle  j_1+j_2+j_3=k \\ \scriptstyle j_i \in \{0,...,k\} \end{array}} \left(\begin{array}{c} \!\!k\!\! \\  \!\!j_1 \, j_2 \, j_3 \! \! \end{array} \right)
 D^{j_1}\big((\gamma_{Z}^{-1})^{(i_1)}\big) D^{j_2}\big(D (\gamma_{Z}^{(i_2)})\big)D^{j_3}\big((\gamma_{Z}^{-1})^{(i_3)}\big).
\end{split}\end{equation*}
Note that by Proposition \ref{dgamma}, the norms of the derivatives $D^{j_2}\big(D (\gamma_{Z}^{(i_2)})$ of $\gamma_{Z}^{(i_2)}$ are of the same order for all
$j_2$.
Hence, we appeal again to H\"older's inequality and Proposition \ref{dgamma}, and use a recursive argument in order to obtain the desired bounds.
\end{proof}

\vskip 12pt

\begin{proof}[Proof of Proposition \ref{3p3}]
The main idea for the proof of Proposition \ref{3p3} is to use a perturbation argument.
Indeed, for $(t,x)$ close to $(s,y)$,
the matrix $\gamma_Z$ is close to
\begin{equation*}
\hat{\gamma} =\left(
\begin{array}{ccc}
\gamma_Z^{(1)} & \vdots & 0\\
\cdots & \vdots & \cdots \\
0 & \vdots & 0\\
\end{array}
\right).
\end{equation*}
The matrix $\hat{\gamma}$ has $d$ eigenvectors of the form $(\hat{\lambda}^1,{\bf 0}),...,(\hat{\lambda}^d, {\bf 0})$,
where $\hat{\lambda}^1,...,\hat{\lambda}^d \in \mathbb{R}^d$ are eigenvectors of $\gamma_Z^{(1)}=\gamma_{u(s,y)}$,
and ${\bf 0}=(0,...,0) \in \mathbb{R}^d$, and $d$ other eigenvectors of the form $({\bf 0},e^i)$ where $e^1,...,e^d$ is a basis
of $\mathbb{R}^d$. These last eigenvectors of $\hat{\gamma}$ are associated with the eigenvalue $0$, while the former
are associated with eigenvalues of order $1$, as can be seen in the proof of Proposition~\ref{p5}.

We now write
\begin{equation} \label{edf}
\text{det} \, \gamma_Z = \prod_{i=1}^{2d} (\xi^i)^{T} \gamma_Z
\xi^i,
\end{equation}
where $\xi=\{\xi^1,...,\xi^{2d}\}$ is an orthonormal basis of
${\R}^{2d}$ consisting of eigenvectors of $\gamma_Z$.
We then expect that for $(t,x)$ close to $(s,y)$, there will be $d$ eigenvectors close to the subspace generated by the
$(\hat{\lambda}^i, {\bf 0})$, which will contribute a factor of order $1$ to the product in (\ref{edf}), and $d$ other eigenvectors,
close to the subspace generated by the $({\bf 0},e^i)$, that will each contribute a factor of order $(|t-s|^{1/2}+|x-y|)^{-1-\eta}$
to the product. Note that if we do not distinguish between these two types of eigenvectors, but simply bound below
the product by the smallest eigenvalue to the power $2d$, following the approach used in the proof of Proposition~\ref{p5}, then we would obtain
$C (|t-s|^{1/2}+|x-y|)^{-2dp}$ in the right-hand side of (\ref{bruce}), which would not be the correct order.

We now carry out this somewhat involved perturbation argument. Consider the spaces $E_1=\{ (\lambda, {\bf 0}) : \lambda \in {\R}^d, {\bf
0} \in \mathbb{R}^d\}$ and $E_2=\{ ({\bf 0}, \mu) : \mu \in {\R}^d, {\bf 0}
\in \mathbb{R}^d\}$. Note that every $\xi^i$ can be written as
\begin{equation} \label{base}
\xi^i=(\lambda_i, \mu_i)= \alpha_i ({\tilde{\lambda}}^i,{\bf 0})
+ \sqrt{1-\alpha_i^2} \, ({\bf 0}, {\tilde{\mu}}^i),
\end{equation}
where $\lambda_i, \mu_i \in \mathbb{R}^d$, $({\tilde{\lambda}}^i,{\bf 0})
\in E_1$, $({\bf 0}, {\tilde{\mu}}^i) \in E_2$, with $\Vert
{\tilde{\lambda}}^i \Vert=\Vert {\tilde{\mu}}^i \Vert=1$ and $0\leq
\alpha_i \leq 1$. Note in particular that $\Vert \xi^i \Vert^2=\Vert \lambda_i
\Vert^2 + \Vert \mu_i \Vert^2=1$ (norms of elements of $\R^d$ or $\R^{2d}$ are Euclidean norms).

\begin{lem} \label{pertur}
Given a sufficiently small $\alpha_0>0$, with probability one, there exist at least $d$ of these vectors, say $\xi^1,...,\xi^d$,
such that $\alpha_1 \geq \alpha_0,...,\alpha_d \geq \alpha_0$.
\end{lem}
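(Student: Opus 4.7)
The plan is to observe that Lemma \ref{pertur} is really a deterministic fact about any orthonormal basis of $\mathbb{R}^{2d}$, and the ``with probability one'' is automatic. For any orthonormal basis $\{\xi^1,\ldots,\xi^{2d}\}$ of $\mathbb{R}^{2d}$, written in block form as $\xi^i=(\lambda_i,\mu_i)$ with $\lambda_i,\mu_i\in\mathbb{R}^d$, the decomposition (\ref{base}) gives $\Vert\lambda_i\Vert^2=\alpha_i^2$. So the quantities $\alpha_i^2$ are simply the squared norms of the ``upper halves'' of the basis vectors.

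The key identity comes from assembling the $\xi^i$ into the $2d\times 2d$ orthogonal matrix $P=[\xi^1|\cdots|\xi^{2d}]$. Since $PP^T=I_{2d}$, the upper-left $d\times d$ block of $PP^T$ equals $\sum_{i=1}^{2d}\lambda_i\lambda_i^T=I_d$, and taking traces yields
\begin{equation*}
\sum_{i=1}^{2d}\alpha_i^2 \ =\ \sum_{i=1}^{2d}\Vert\lambda_i\Vert^2 \ =\ d.
\end{equation*}
Together with the trivial bound $0\le\alpha_i\le 1$ (coming from $\Vert\xi^i\Vert=1$), this is the only structural fact we need.

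The conclusion follows by a pigeonhole argument. Suppose, for contradiction, that strictly fewer than $d$ of the $\alpha_i$ are $\ge\alpha_0$. Then at least $d+1$ of them satisfy $\alpha_i<\alpha_0$, while the remaining (at most $d-1$) satisfy $\alpha_i\le 1$. Summing squares,
\begin{equation*}
d \ =\ \sum_{i=1}^{2d}\alpha_i^2 \ <\ (d-1)\cdot 1 + (d+1)\alpha_0^2,
\end{equation*}
which forces $\alpha_0>1/\sqrt{d+1}$. Contrapositively, choosing any $\alpha_0\le 1/\sqrt{d+1}$ guarantees that at least $d$ of the indices $i\in\{1,\ldots,2d\}$ satisfy $\alpha_i\ge\alpha_0$. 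After relabeling, these may be called $\xi^1,\ldots,\xi^d$, as claimed.

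There is no real obstacle in this argument; the only point worth noting is measurability of the eigenvector selection, which is irrelevant here since the statement holds pointwise (and hence a.s.) for \emph{any} choice of orthonormal eigenbasis of $\gamma_Z$. The nontrivial content of the lemma lies not in its proof but in how it will be used in the sequel: the $d$ eigenvectors $\xi^1,\ldots,\xi^d$ have a uniformly non-degenerate projection onto $E_1$, which lets one compare the corresponding Rayleigh quotients $(\xi^i)^T\gamma_Z\xi^i$ with the well-behaved eigenvalues of $\gamma_{u(s,y)}=\gamma_Z^{(1)}$, thereby separating the ``large'' eigenvalues of $\gamma_Z$ from the ``small'' ones in the proof of Proposition \ref{3p3}.
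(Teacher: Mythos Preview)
Your proof is correct, and it takes a genuinely different route from the paper's. The paper argues as follows: setting $D=\{i:\alpha_i<\alpha_0\}$, orthogonality of the $\xi^i$ forces $|\tilde\mu^i\cdot\tilde\mu^j|\le \alpha_0^2/(1-\alpha_0^2)$ for distinct $i,j\in D$; for $\alpha_0$ small this makes the Gram matrix $(\tilde\mu^i\cdot\tilde\mu^j)_{i,j\in D}$ a small perturbation of the identity, hence nonsingular, so $\{\tilde\mu^i:i\in D\}$ is linearly independent in $\R^d$ and $|D|\le d$. Your argument instead uses the trace identity $\sum_{i=1}^{2d}\alpha_i^2=d$ (from $PP^{\sf T}=I_{2d}$) together with a pigeonhole count. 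Your approach is more elementary and yields the explicit threshold $\alpha_0\le 1/\sqrt{d+1}$, whereas the paper's argument gives only an implicit ``sufficiently small'' $\alpha_0$ via nonsingularity of a perturbed Gram matrix. Both correctly recognize the lemma as a deterministic linear-algebra fact about orthonormal bases of $\R^{2d}$.
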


\begin{proof}
Observe that as $\xi$ is an orthogonal family and for $i \neq j$, the Euclidean inner product of $\xi^i$ and $\xi^j$ is
\begin{equation*}
\xi^i \cdot \xi^j = \alpha_i \alpha_j \, ({\tilde{\lambda}}^i
\cdot {\tilde{\lambda}}^j)+\sqrt{1-\alpha_i^2}
\sqrt{1-\alpha_j^2} \,  ({\tilde{\mu}}^i \cdot {\tilde{\mu}}^j)=0.
\end{equation*}
For $\alpha_0>0$, let $D=\{ i \in \{ 1,...,2d\} : \alpha_i <
\alpha_0 \}$. Then, for $i,j \in D$, $i \neq j$, if $\alpha_0 <
\frac{1}{2}$, then
\begin{equation*}
\vert {\tilde{\mu}}^i \cdot {\tilde{\mu}}^j \vert =
\frac{\alpha_i \alpha_j }{\sqrt{1-\alpha_i^2} \sqrt{1-\alpha_j^2}}
\vert {\tilde{\lambda}}^i \cdot {\tilde{\lambda}}^j \vert \leq
\frac{\alpha_0^2}{1-\alpha_0^2} \Vert {\tilde{\lambda}}^i\Vert
\Vert {\tilde{\lambda}}^j \Vert \leq \frac{1}{3}
\alpha_0^2.
\end{equation*}
Since the diagonal terms of the matrix $({\tilde{\mu}}^i \cdot {\tilde{\mu}}^j)_{i,j \in D}$ are all equal to $1$, for $\alpha_0$ sufficiently small, it follows that $\text{det}
(({\tilde{\mu}}^i \cdot {\tilde{\mu}}^j)_{i,j \in D}) \neq 0$.
Therefore, $\{ {\tilde{\mu}}^i, \, i \in D \}$ is a linearly independent family, and,
as $({\bf 0}, {\tilde{\mu}}^i) \in E_2$, for $i=1,...,2d$, we conclude that a.s.,
$\text{card}(D) \leq \text{dim}(E_2)=d$. We can therefore assume that $\{1,...,d\} \subset D^{c}$ and so $\alpha_1 \geq \alpha_0$,...,$\alpha_d \geq \alpha_0$.
\end{proof}

\vskip 12pt
By Lemma~\ref{pertur} and Cauchy-Schwarz inequality one can write
\begin{equation*} \begin{split}
\E \big[\big(\textnormal{det} \, \gamma_Z\big)^{-p}\big]^{1/p}  &\leq
\biggl( \E \biggl[\biggl( \prod_{i=1}^{d} (\xi^i)^{T}
\gamma_Z \xi^i \biggr)^{-2p} \biggr] \biggr)^{1/(2p)} \\
& \qquad \qquad  \times \biggl( \E \left[
\left(\inf_{\substack{\xi =(\lambda, \mu) \in \R^{2d} :\\ \|
\lambda \| ^2 + \|  \mu \| ^2=1}} \,\xi^{\sf T} \gamma_Z  \xi
\right)^{-2dp} \right] \biggr)^{1/(2p)}.
\end{split}\end{equation*}
With this, Propositions \ref{pr:small-eigen:1} and \ref{large} below conclude the proof of Proposition~\ref{3p3}.
\end{proof}

\subsubsection{Small Eigenvalues}

Let $I$ and $J$ two compact intervals as in Theorem \ref{t1}.
\begin{prop}\label{pr:small-eigen:1}
Fix $\eta,T>0$. Assume {\bf P1} and {\bf P2}.
\begin{enumerate}
\item[\textnormal{(a)}] There exists
$C$ depending on $\eta$ and $T$ such that for all $s,t \in I$, $0 < t-s <1$, $x,y \in J$, $x \neq y$,
and $p>1$,
\begin{equation*}
\E\left[ \left(      \inf_{\substack{\xi =(\lambda, \mu) \in \R^{2d} :\\ \|  \lambda
\| ^2 + \|  \mu \| ^2=1}} \,\xi^{\sf T} \gamma_Z  \xi
\right)^{-2dp} \right] \leq C (|t-s|^{1/2}+|x-y|)^{-2dp(1+\eta)}.
\end{equation*}
\item[\textnormal{(b)}] There exists
$C$ depending only on $T$ such that for all $s =  t \in I$, $x,y \in J$, $x \neq y$,
and $p>1$,
\begin{equation*}
\E \left[ \left(
\inf_{\substack{\xi =(\lambda, \mu) \in \R^{2d} :\\ \|  \lambda
\| ^2 + \|  \mu \| ^2=1}} \,\xi^{\sf T} \gamma_Z  \xi
\right)^{-2dp} \right] \leq C (|x-y|)^{-2dp}.
\end{equation*}
\end{enumerate}
\end{prop}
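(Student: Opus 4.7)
The plan is to apply Proposition~\ref{deter} to the random variable $Z_\ast := \inf_{\|\xi\|=1}\xi^{\sf T}\gamma_Z\xi$, the smallest eigenvalue of $\gamma_Z$. For $\xi = (\lambda,\mu)$ with $\|\lambda\|^2+\|\mu\|^2 = 1$, the quadratic form reads
\begin{equation*}
\xi^{\sf T}\gamma_Z\xi \ =\ \left\|\sum_i \lambda_i\, D(u_i(s,y)) + \sum_i \mu_i\, D\bigl(u_i(t,x)-u_i(s,y)\bigr)\right\|_\mathcal{H}^2,
\end{equation*}
and the pivotal structural fact is that $D_{r,v}(u_i(s,y)) = 0$ for $r>s$, so on $(s,t]$ only the $\mu$-part of the derivative survives. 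I split the unit sphere into $\{\|\mu\|^2\ge 1/2\}$ and $\{\|\lambda\|^2\ge 1/2\}$, obtain a lower bound in each, and take the minimum.

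The driving region is $\|\mu\|^2\ge 1/2$, where one localizes the $\mathcal{H}$-integral to a small window on which $D(u(s,y))$ is zero or exponentially small, then mimics the argument behind Proposition~\ref{p5}. Writing $\Delta := |t-s|^{1/2}+|x-y|$, part (a) splits into two sub-cases. If $(t-s)^{1/2}\ge|x-y|$, I localize to $[t-\epsilon(t-s),t]\subset(s,t]$, on which $D(u(s,y))$ vanishes identically; combining (\ref{deri}), $(a+b)^2\ge\tfrac{2}{3}a^2-2b^2$, hypothesis {\bf P2}, and the Green-kernel bound used in the proof of Proposition~\ref{p5} gives $\xi^{\sf T}\gamma_Z\xi \ge c_1(\epsilon(t-s))^{1/2} - Y_{1,\epsilon}^{(t)}$ with $\E|Y_{1,\epsilon}^{(t)}|^q\le C(\epsilon(t-s))^q$. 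If $|x-y|>(t-s)^{1/2}$, I localize to $\{(r,v):r\in[t-\epsilon|x-y|^2,t],\ |v-x|\le C(t-r)^{1/2}\}$; Gaussian tail estimates show $G_{s-r}(y,v), G_{t-r}(y,v) = O(e^{-c/\epsilon})$ on this region, so the $\lambda$-contribution is negligible and the same computation produces $c_1(\epsilon|x-y|^2)^{1/2} - Y_{1,\epsilon}^{(x)}$. In either sub-case,
\begin{equation*}
\xi^{\sf T}\gamma_Z\xi \ \ge\ c_1\epsilon^{1/2}\Delta - Y_{1,\epsilon}, \qquad \E|Y_{1,\epsilon}|^q\le C(\epsilon\Delta^2)^q.
\end{equation*}
For part (b) the temporal sub-case is vacuous; performing the spatial localization separately around $x$ and around $y$ and noting that $\max(\|\mu\|^2,\|\lambda-\mu\|^2)$ is bounded below by a positive constant on the unit sphere yields $\xi^{\sf T}\gamma_Z\xi \ge c_1'\epsilon^{1/2}|x-y| - Y_{1,\epsilon}'$ with an analogous moment bound.

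In the complementary region $\|\lambda\|^2\ge 1/2$, the inequality $\|a+b\|_\mathcal{H}^2\ge\tfrac12\|a\|_\mathcal{H}^2 - \|b\|_\mathcal{H}^2$ dominates the cross term, giving
\begin{equation*}
\xi^{\sf T}\gamma_Z\xi \ \ge\ \tfrac12\lambda^{\sf T}\gamma_{u(s,y)}\lambda - \mu^{\sf T}\gamma_Z^{(4)}\mu.
\end{equation*}
Proposition~\ref{p5}'s proof applied at the single point $(s,y)$ furnishes $\lambda^{\sf T}\gamma_{u(s,y)}\lambda\ge c_2\epsilon^{1/2}\|\lambda\|^2 - Y_{\lambda,\epsilon}$ with $\E|Y_{\lambda,\epsilon}|^q\le C\epsilon^q$, while the diagonal entries of $\gamma_Z^{(4)}$ coincide with $\|D(u_i(t,x)-u_i(s,y))\|_\mathcal{H}^2$, whose $q$-moments are bounded by $C\Delta^q$ via Proposition~\ref{3p1}. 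Thus, on this region, $\xi^{\sf T}\gamma_Z\xi \ge c_2'\epsilon^{1/2} - Y_{2,\epsilon}$, where $Y_{2,\epsilon}$ is the sum of an $\epsilon^q$-moment piece and a $\Delta^q$-moment piece that is constant in $\epsilon$. Taking the minimum of the two cases produces the form needed for Proposition~\ref{deter} with $\alpha_1 = \alpha_2 = 1/2$.

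The principal technical obstacle is exactly this constant-in-$\epsilon$ piece of $Y_{2,\epsilon}$: it cannot be absorbed into an $\epsilon^{q\beta_2}$ bound uniformly for $\epsilon\in(0,1)$ as Proposition~\ref{deter} demands. The remedy in part (a) is a careful calibration of the $\|\mu\|$-threshold between the two regions against a small power of $\epsilon$, so that the cross-term moment is dominated by $\epsilon^{q\beta_2}$ at the cost of enlarging slightly the $\alpha_1$ exponent of the other region; this manipulation converts the output $\E[Z_\ast^{-2dp}]\lesssim \epsilon_0^{-dp}$ of Proposition~\ref{deter} (with $\epsilon_0$ of order $\Delta^{2(1+\eta)}$) into $\Delta^{-2dp(1+\eta)}$ for any arbitrarily small $\eta>0$, and is the sole source of the $\eta$-loss in (a). Part (b) is spared this difficulty because the spatial Green-kernel separation kills the cross term exponentially in $\epsilon$ rather than leaving a $\Delta^q$-sized residual, so no such slack is needed and the clean exponent $|x-y|^{-1}$ is recovered.
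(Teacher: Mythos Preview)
Your overall strategy --- split the unit sphere by the size of $\|\mu\|$, localize the $\mathcal{H}$-integral, and invoke Proposition~\ref{deter} --- matches the paper's, and your part~(b) argument via simultaneous spatial localization around $x$ and $y$ together with the elementary bound $\|\mu\|^2+\|\lambda-\mu\|^2\ge c>0$ is essentially the paper's Sub-Case~A and is sound.

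The genuine gap is in your $\|\lambda\|$-large region for part~(a). Your inequality $\xi^{\sf T}\gamma_Z\xi \ge \tfrac12\lambda^{\sf T}\gamma_{u(s,y)}\lambda - \mu^{\sf T}\gamma_Z^{(4)}\mu$ subtracts the \emph{full} increment quadratic form, whose $q$-th moment is of order $\Delta^q$ by Proposition~\ref{3p1}. After imposing $\|\mu\|^2\le\epsilon^{\eta}$ this error has moments $\lesssim\epsilon^{q\eta}\Delta^q$, which cannot be written as $C(q)\,\epsilon^{q\beta_2}$ with $\beta_2>\tfrac12$ and $C(q)$ independent of $\Delta$ unless $\eta>\tfrac12$; your sliding-threshold ``remedy'' only produces the factor $\epsilon^{\eta}$ and leaves the $\Delta^q$ intact, so Proposition~\ref{deter} cannot be applied as you describe. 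The paper avoids this by reversing the order of operations: it localizes the $\mathcal{H}$-integral to $[s-\epsilon,s]$ \emph{first} and only then separates the $(\lambda-\mu)$- and $\mu$-contributions. On that small window the $\mu$-contamination (the terms $W_1,W_3$ in the paper) involves $\int_{s-\epsilon}^s\int_0^1 G_{t-r}^2(x,v)\,dv\,dr$ and $\int_{s-\epsilon}^s\int_0^1 a_i^2(k,r,v,t,x)\,dv\,dr$, both of which contribute moments $\lesssim\epsilon^{q/2}$ (Lemmas~\ref{int:G^2} and~\ref{lem:7.7}) rather than $\Delta^q$; combined with $\|\mu\|^2\le\epsilon^{\eta}$ this gives the required $\epsilon^{q(1/2+\eta)}$. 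In short, decompose after localizing, not before.
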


\begin{proof} We begin by proving (a). Since $\gamma_Z$ is a matrix of inner products, we can write
\begin{equation*}
\xi^{\sf T} \gamma_Z  \xi = \sum_{k=1}^d \int_0^T dr \int_0^1 dv \biggl(
 \sum_{i=1}^d \left( \lambda_i D^{(k)}_{r,v}(u(s,y)) + \mu_i
(D^{(k)}_{r,v}(u(t,x))-D^{(k)}_{r,v}(u(s,y))) \right) \biggr)^2.
\end{equation*}
Therefore, for $\epsilon \in (0, t-s)$,
\begin{equation*}
\xi^{\sf T} \gamma_Z  \xi \geq J_1+J_2,
\end{equation*}
where
\begin{equation*} \begin{split}
J_1 &:= \sum_{k=1}^d \int_{s-\e}^s dr \int_0^1 dv \left( \sum_{i=1}^d (\lambda_i-\mu_i)\left[
G_{s-r}(y,v) \sigma_{ik}(u(r,v))+a_i(k,r,v,s,y)\right] + W\right)^2,\\
J_2 &:= \sum_{k=1}^d \int_{t-\e}^t dr \int_0^1 dv\,
W^2,
\end{split}\end{equation*}
$a_i(k,r,v,s,y)$ is defined in (\ref{a}) and
\begin{equation*}
W := \sum_{i=1}^d \left[ \mu_i G_{t-r}(x,v) \sigma_{ik}
(u(r,v)) + \mu_i a_i(k,r,v,t,x)\right].
\end{equation*}
From here on, the proof is divided into two cases.
\vskip 12pt

    \noindent\emph{Case 1.} In the first case, we assume that
    $|x-y|^2\le t-s$. Choose and fix an $\e\in(0,t-s)$.
    Then we may write
    \begin{equation*}
        \inf_{\|\xi\|=1} \xi^{\sf T}\gamma_Z\xi \ge
        \min\left( \inf_{ \|\xi\|=1\,,\|\mu\|\ge
        \e^{\eta/2}} J_2\,, \inf_{\|\xi\|=1\,,\|\mu\|\le\e^{\eta/2}}
        J_1\right).
    \end{equation*}
    We are going to prove that
    \begin{equation}\label{goal:J2J1}\begin{split}
        \inf_{ \|\xi\|=1\,,\|\mu\|\ge
            \e^{\eta/2}} J_2 &\ge \e^{\frac12+\eta} - Y_{1,\e},\\
        \inf_{\|\xi\|=1\,,\|\mu\|\le\e^{\eta/2}}
            J_1 & \ge \e^{1/2} - Y_{2,\e},
    \end{split}\end{equation}
    where, for all $q\ge 1$,
    \begin{equation}  \label{goal:J2J1:UB}
        \E \left[\left| Y_{1,\e}\right|^q\right] \le c_1(q) \e^q \; \; \;
            \text{and} \; \; \;
        \E \left[\left| Y_{2,\e}\right|^q\right] \le c_2(q) \e^{
            q(\frac12+\eta)}.
    \end{equation}
    We assume these, for the time being, and finish the proof
    of the proposition in Case $1$. Then we will return to proving \eqref{goal:J2J1} and (\ref{goal:J2J1:UB}).

We can combine \eqref{goal:J2J1} and (\ref{goal:J2J1:UB}) with Proposition~\ref{deter} to find that
\begin{equation*}\begin{split}
\E \left[ \left(\inf_{\|\xi\|=1} \xi^{\sf T}\gamma_Z\xi \right)^{-2pd}\right]
            & \le c
            (t-s)^{-2pd(\frac12+\eta)}\\
        &\le \tilde{c} \left[ (t-s)^{1/2} + |x-y|\right]^{-2pd
            (1+2\eta)},
    \end{split}\end{equation*}
    whence follows the proposition in the case that
    $|x-y|^2\le t-s$. Now we complete our proof
    of Case 1 by deriving \eqref{goal:J2J1} and (\ref{goal:J2J1:UB}).

    Let us begin with the term that involves $J_2$. Inequality (\ref{eq:23-2}) implies that
    \begin{equation*}
        \inf_{ \|\xi\|=1\,,\|\mu\|\ge
        \e^{\eta/2}} J_2 \ge  \hat{Y}_{1,\e} - Y_{1,\e},
    \end{equation*}
    where
    \begin{equation*}\begin{split}
        &\hat{Y}_{1,\e} := \frac23 \inf_{\|\mu\|\ge
            \e^{\eta/2}} \sum_{k=1}^d \int_{t-\e}^t
            dr\int_0^1dv\, \left( \sum_{i=1}^d
            \mu_i \sigma_{ik}(u(r,v))\right)^2
            G^2_{t-r}(x,v),\\
        &Y_{1,\e} := 2\sup_{\|\mu\|\ge
            \e^{\eta/2}} \sum_{k=1}^d\int_{t-\e}^t
            dr\int_0^1dv\, \left( \sum_{i=1}^d
            \mu_i  a_i(k,r,v,t,x)\right)^2.
    \end{split}\end{equation*}
In agreement with hypothesis {\bf P2}, and thanks to
Lemma \ref{(A.3)},
\begin{equation*}
        \hat{Y}_{1,\e} \ge c\inf_{\|\mu\|\ge\e^{\eta/2}}
        \|\mu\|^2\e^{1/2} \ge c\e^{\frac12+\eta}.
\end{equation*}
Next we apply Lemma~\ref{lem:7.7} below [with $s:=t$] to find  that $\E [|Y_{1,\e}|^q]\le c\e^q$. This proves the  bounds in \eqref{goal:J2J1} and (\ref{goal:J2J1:UB}) that concern $J_2$ and $Y_{1,\e}$.

    In order to derive the second bound in \eqref{goal:J2J1},
    we appeal to \eqref{eq:23-2} once more to find that
    \begin{equation*}
        \inf_{ \|\xi\|=1\,,\|\mu\|\le
        \e^{\eta/2}} J_1 \ge  \hat{Y}_{2,\e} - Y_{2,\e},
    \end{equation*}
    where
    \begin{equation*}
        \hat{Y}_{2,\e} := \frac23 \inf_{\|\mu\|\le
            \e^{\eta/2}} \sum_{k=1}^d \int_{s-\e}^s
            dr\int_0^1dv \left( \sum_{i=1}^d
            (\lambda_i-\mu_i) \sigma_{ik}(u(r,v))\right)^2
            G^2_{s-r}(y,v),
    \end{equation*}
    and
    \begin{equation*}
        Y_{2,\e} := 2\left( W_1+W_2+W_3 \right),
    \end{equation*}
    where
    \begin{equation*}\begin{split}
        &W_1 := \sup_{\|\mu\|\le
            \e^{\eta/2}} \sum_{k=1}^d\int_{s-\e}^s
            dr\int_0^1dv\, \left( \sum_{i=1}^d
            \mu_i  G_{t-r}(x,v)  \sigma_{ik}(u(r,v))\right)^2,\\
        &W_2 := \sup_{\|\xi\|=1} \sum_{k=1}^d\int_{s-\e}^s
            dr\int_0^1dv\, \left( \sum_{i=1}^d
            (\lambda_i-\mu_i) a_i(k,r,v,s,y) \right)^2,\\
        &W_3 := \sup_{\|\mu\|\le
            \e^{\eta/2}} \sum_{k=1}^d\int_{s-\e}^s
            dr\int_0^1dv\, \left( \sum_{i=1}^d
            \mu_i a_i(k,r,v,t,x)\right)^2.
    \end{split}\end{equation*}
    Hypothesis {\bf P2} and Lemma \ref{(A.3)}
    together yield
    \begin{equation}\label{Y_{2,e}}
        \hat{Y}_{2,\e} \ge c \e^{1/2}.
    \end{equation}
    Next, we apply the Cauchy--Schwarz inequality to find that
    \begin{equation*}\begin{split}
        \E\left[ \left| W_1 \right|^q\right] &\le
            \sup_{\|\mu\|\le\e^{\eta/2}}\|\mu\|^{2q} \times
            \E\left[ \left| \sum_{k=1}^d \int_{s-\e}^s
            dr \int_0^1 dv\, \sum_{i=1}^d
            \left( \sigma_{ik}(u(r,v))\right)^2 G^2_{t-r}(x,v)
            \right|^q\right]\\
        &\le c \e^{q\eta} \, \left| \sum_{k=1}^d \int_{s-\e}^s
            dr \int_0^1 dv\, G^2_{t-r}(x,v) \right|^q,
    \end{split}\end{equation*}
    thanks to hypothesis {\bf P1}. In light of this,
    Lemma \ref{int:G^2} implies that
    $\E\left[ \left| W_1 \right|^q\right] \le c \e^{q(\frac12+\eta)}.$

    In order to bound the $q$-th moment of $|W_2|$, we use
    the Cauchy--Schwarz inequality together with hypothesis {\bf P1}, and write
    \begin{equation*}\begin{split}
        \E \left[ \left| W_2\right|^q\right]
        &\le\sup_{\|\mu\|\le \e^{\eta/2}}
            \|\lambda-\mu\|^{2q}  \times \E\left[
            \left| \sum_{k=1}^d \int_{s-\e}^s dr
            \int_0^1 dv
            \sum_{i=1}^d a_i^2(k,r,v,s,y) 
            \right|^q\right]\\
        &\le C \E\left[
            \left| \sum_{k=1}^d \int_{s-\e}^s dr
            \int_0^1 dv  \sum_{i=1}^d a_i^2(k,r,v,s,y)
            \right|^q\right].
    \end{split}\end{equation*}
We apply Lemma \ref{lem:7.7} below [with $s:=t$] to find that $\E \left[ \left| W_2\right|^q\right] \le c \e^q.$

    Similarly, we find using Lemma \ref{lem:7.7} that
    \begin{equation*}\begin{split}
        \E\left[ \left| W_3\right|^q\right]
        &\le\sup_{\|\mu\|\le \e^{\eta/2}}
            \|\mu\|^{2q} \times \E\left[
            \left| \sum_{k=1}^d \int_{s-\e}^s dr
            \int_0^1 dv  \sum_{i=1}^d a_i^2(k,r,v,t,x)
            \right|^q\right]\\
        &\le c  \e^{q\eta} \, (t-s+\e)^{q/2}\e^{q/2}\\
        &\le c \e^{q(\frac12+\eta)}.
    \end{split}\end{equation*}

    The preceding bounds for $W_1$, $W_2$, and $W_3$
    prove, in conjunction, that
    $\E[|Y_{2,\e}|^q]\le c_2(q)\e^{q(\frac12+\eta)}$.
    This and \eqref{Y_{2,e}} together prove
    the bounds in \eqref{goal:J2J1} and (\ref{goal:J2J1:UB}) that concern $J_1$ and $Y_{2,\e}$, whence follows
    the result in Case 1.

    \vskip 12pt

    \noindent\emph{Case 2.} Now we work on the second case
    where $|x-y|^2\ge t-s\ge 0$. Let $\e>0$ be such that
    $(1+\alpha)\e^{1/2}<\frac{1}{2} |x-y|$, where
    $\alpha>0$ is large but fixed; its specific value will be decided
    on later. Then
    \begin{equation*}
        \xi^{\sf T} \gamma_Z \xi \ge I_1 + I_2 + I_3,
    \end{equation*}
    where
    \begin{equation*}\begin{split}
        I_1 &:= \sum_{k=1}^d \int_{s-\e}^s dr
            \int_{y-\sqrt{\e}}^{y+\sqrt{\e}} dv \,
            \left( \mathcal{S}_1 + \mathcal{S}_2\right)^2,\\
        I_2 &:=\sum_{k=1}^d \int_{s-\epsilon}^s dr
            \int_{x-\sqrt{\epsilon}}^{x+\sqrt{\epsilon}} dv\,
            \left( \mathcal{S}_1 + \mathcal{S}_2\right)^2,\\
        I_3 &:=\sum_{k=1}^d \int_{(t-\epsilon) \vee s}^t dr
            \int_{x-\sqrt{\epsilon}}^{x+\sqrt{\epsilon}} dv \,
            \mathcal{S}_2^2,
    \end{split}\end{equation*}
    and
    \begin{equation*}\begin{split}
        \mathcal{S}_1 &:=\sum_{i=1}^d (\lambda_i-\mu_i) \left[
            G_{s-r}(y,v) \sigma_{i,k}(u(r,v))+  a_i(k,r,v,s,y) \right],\\
        \mathcal{S}_2 &:=
            \sum_{i=1}^d \mu_i \left[ G_{t-r}(x,v) \sigma_{ik}(u(r,v)) +
            a_i(k,r,v,t,x) \right].
    \end{split}\end{equation*}

    From here on, Case 2 is divided into two further sub-cases.

    \vskip 12pt

\noindent\emph{Sub-Case A.}
Suppose, in addition, that $\e\ge t-s$. In this case, we are going to prove that
\begin{equation}\label{RCD:3}
\inf_{\|\xi\|=1} \xi^{\sf T}\gamma_Z\xi \ge
c \e^{1/2} - Z_{1,\e},
\end{equation}
where for all $q\ge 1$,
\begin{equation} \label{RCD:3a}
\E\left[ \left| Z_{1,\e} \right|^q\right]\le c(q)\e^{3q/4}.
\end{equation}
Apply (\ref{eq:23-2}) to find that
\begin{equation*}
I_1 \ge \frac23 \tilde{A}_1 - B_1^{(1)} - B_1^{(2)},
\end{equation*}
where
\begin{eqnarray} \nonumber
\tilde{A}_1 &:=& \sum_{k=1}^d \int_{s-\e}^s dr
\int_{y-\sqrt\e}^{y+\sqrt\e} dv \left( \sum_{i=1}^d
\left[ (\lambda_i-\mu_i) G_{s-r}(y,v)
+ \mu_i G_{t-r}(x,v)\right] \sigma_{ik}(u(r,v))\right)^2,\\ \label{B11}
B_1^{(1)} &:=& 4 \|\lambda-\mu\|^2\sum_{k=1}^d \int_{s-\e}^s dr\int_{y-\sqrt\e}^{
y+\sqrt\e} dv\, \sum_{i=1}^d a_i^2(k,r,v,s,y),\\ \label{B12}
B_1^{(2)}  &:=& 4 \|\mu\|^2
            \sum_{k=1}^d \int_{s-\e}^s dr\int_{y-\sqrt\e}^{
            y+\sqrt\e} dv\,
            \sum_{i=1}^d a_i^2(k,r,v,t,x).
\end{eqnarray}
Using the inequality 
    \begin{equation} \label{qua}
    (a-b)^2 \geq \frac{2}{3} a^2 -2 ab,
    \end{equation}
    we see that
    \begin{equation*}
        \tilde{A}_1 \ge \frac23 A_1 - B^{(3)}_1,
    \end{equation*}
    where
    \begin{equation*}\begin{split}
        A_1 &:=\sum_{k=1}^d \int_{s-\e}^s dr
            \int_{y-\sqrt{\e}}^{y+\sqrt{\e}} dv \left(
            \sum_{i=1}^d (\lambda_i-\mu_i) G_{s-r}(y,v)
            \sigma_{ik}(u(r,v))\right)^2,\\
        B_1^{(3)} &:= 2 \sum_{k=1}^d \int_{s-\epsilon}^s dr
            \int_{y-\sqrt{\e}}^{y+\sqrt{\e}} dv \left(
            \sum_{i=1}^d  (\lambda_i-\mu_i) G_{s-r}(y,v) \sigma_{ik}(u(r,v))
            \right) \\
        &\hskip1.45in \times\left(\sum_{i=1}^d \mu_i G_{t-r}(x,v)
            \sigma_{ik}(u(r,v))  \right).
    \end{split}\end{equation*}
    We can combine terms to find that
    \begin{equation*}
        I_1 \ge \frac49 A_1 - B_1^{(1)} - B_1^{(2)}
        - B_1^{(3)}.
    \end{equation*}

    We proceed in like manner for $I_2$, but obtain
    slightly sharper
    estimates as follows. Owing to \eqref{qua},
    \begin{equation*}
        I_2 \ge \frac{2}{3} A_2 - B^{(1)}_2- B^{(2)}_2- B^{(3)}_2,
    \end{equation*}
    where
    \begin{equation*}\begin{split}
        A_2 &:=  \sum_{k=1}^d \int_{s-\e}^s dr
            \int_{x-\sqrt{\e}}^{x+\sqrt{\e}} dv \left(
            \sum_{i=1}^d \mu_i G_{t-r}(x,v) \sigma_{ik}(u(r,v)) \right)^2,\\
        B_2^{(1)} &:= 2\sum_{k=1}^d \int_{s-\e}^s  dr\int_{x-\sqrt\e}^{
        x+\sqrt\e}  dv\left( \sum_{i=1}^d
        \mu_i G_{t-r}(x,v) \sigma_{ik}
        (u(r,v)) \right)\\
    &\hskip1.74in \times\left(\sum_{i=1}^d \mu_i
        a_i(k,r,v,t,x) \right),\\
    B_2^{(2)} &:= 2\sum_{k=1}^d \int_{s-\e}^s  dr\int_{x-\sqrt\e}^{
        x+\sqrt\e}  dv\left( \sum_{i=1}^d
        \mu_i G_{t-r}(x,v) \sigma_{ik}
        (u(r,v)) \right)\\
    &\hskip1.74in \times\left(\sum_{i=1}^d (\lambda_i-\mu_i)
        a_i(k,r,v,s,y) \right), \\
        B_2^{(3)} &:= 2\sum_{k=1}^d \int_{s-\epsilon}^s dr
            \int_{x-\sqrt{\e}}^{x+\sqrt{\e}} dv \left(
            \sum_{i=1}^d  \mu_i G_{t-r}(x,v) \sigma_{ik}(u(r,v))
            \right) \\
        &\hskip1.45in \times\left(\sum_{i=1}^d (\lambda_i-\mu_i) G_{s-r}(y,v)
            \sigma_{ik}(u(r,v))  \right).
    \end{split}\end{equation*}
    Finally, we appeal to \eqref{eq:23-2} to find that
    \begin{equation*}
        I_3 \ge \frac23 A_3-B_3,
    \end{equation*}
where
\begin{eqnarray}\nonumber
        A_3 &:=&  \sum_{k=1}^d \int_{(t-\e) \vee s}^t dr
            \int_{x-\sqrt{\e}}^{x+\sqrt{\epsilon}} dv \left(
            \sum_{i=1}^d \mu_i G_{t-r}(x,v) \sigma_{ik}(u(r,v)) \right)^2,\\
        B_3 &:=&2 \sum_{k=1}^d \int_{(t-\e) \vee s}^t dr
            \int_{x-\sqrt{\e}}^{x+\sqrt{\e}} dv \left(
            \sum_{i=1}^d \mu_i a_i(k,r,v,t,x)   \right)^2.
            \label{B3}
\end{eqnarray}

    By hypothesis {\bf P2},
    \begin{equation*}\begin{split}
        A_1 + A_2 + A_3 &\ge \rho^2\left(
            \|\lambda-\mu\|^2\int_{s-\e}^s dr
            \int_{y-\sqrt\e}^{y+\sqrt\e} dv\,
            G^2_{s-r}(y,v) \right.\\
        &\hskip.5in +\|\mu\|^2\int_{s-\e}^s dr
            \int_{x-\sqrt\e}^{x+\sqrt\e} dv\,
            G^2_{t-r}(x,v) \\
        &\hskip.5in + \left. \|\mu\|^2 \int_s^t dr\int_{x-\sqrt\e}^{
            x+\sqrt\e} dv\, G^2_{t-r}(x,v) \right).
    \end{split}\end{equation*}
    Note that we have used the defining assumption of
    Sub-Case A, namely, that $\e\ge t-s$. Next, we group the last two integrals and apply
    Lemma \ref{(A.3)} to find that
    \begin{equation}\label{subcaseA:A1A2A3}\begin{split}
        A_1 + A_2 + A_3
        &\ge c \left(
            \|\lambda-\mu\|^2\e^{1/2} +
            \|\mu\|^2\int_{t-\e}^t
            dr\int_{x-\sqrt\e}^{x+\sqrt\e}
            dv\, G^2_{t-r}(x,v) \right)\\
        &\ge c\left( \|\lambda-\mu\|^2 +\|\mu\|^2\right)\e^{1/2}\\
        &\ge c \e^{1/2}.
    \end{split}\end{equation}

    We are aiming for \eqref{RCD:3}, and propose to bound the
    absolute moments
    of $B_1^{(i)}$, $B_2^{(i)}$, $i=1,2,3$ and $B_3$, separately.
    According to Lemma \ref{lem:7.7} below with $s=t$,
    \begin{equation}\label{subcaseA:B_3}
        \E \left[ \sup_{\|\xi\|=1} \left| B_3 \right|^q\right]
        \le c(q) \e^q.
    \end{equation}

    Next we bound the absolute moments of $B^{(i)}_1$, $i=1,2,3$.
    Using hypothesis {\bf P1}
    and Lemma \ref{lem:7.7}, with $t=s$,
    we find that for all $q\ge 1$,
\begin{equation}\label{subcaseA:B_1^1}
        \E \left[ \sup_{\| \xi\|=1}
        \left| B_1^{(1)} \right|^q \right]\le c\e^q.
\end{equation}
In the same way, we see that
\begin{equation}\label{EB3}
        \E \left[ \sup_{\| \xi
        \| =1} \left| B_1^{(2)} \right|^q \right]
        \leq c  (t-s+\e)^{q/2}\e^{q/2}.
\end{equation}
    We are in the sub-case A where $t-s\le\e$. Therefrom,
    we obtain the following:
    \begin{equation}\label{subcaseA:B_1^2}
        \E \left[ \sup_{\| \xi
        \| =1} \left| B_1^{(2)} \right|^q \right]
        \le c \e^q.
    \end{equation}

    Finally, we turn to bounding the absolute moments of
    $B_1^{(3)}$. Hypothesis {\bf P1} assures us
    that
    \begin{equation*}\begin{split}
        \left| B_1^{(3)} \right| &\leq c
            \int_{s-\e}^s dr \,
            \int_{y-\sqrt{\e}}^{y+\sqrt{\e}} dv \, G_{s-r}(y,v)
            G_{t-r}(x,v)\\
        &\leq c \int_{s-\e}^s dr \,
            \int_0^1 dv \, G_{s-r}(y,v)
            G_{t-r}(x,v)\\
        &= c\int_{s-\e}^s dr\, G_{t+s-2r}(x,y),
    \end{split}\end{equation*}
    thanks to the semi-group property \textsc{Walsh} \cite[(3.6)]{Walsh:86}
    (see (\ref{square}) below).
    This and Lemma \ref{(A.1)}
    together prove that
    \begin{equation*}\begin{split}
        \left| B_1^{(3)} \right|  &\leq  c  \int_0^\e
            \frac{du}{\sqrt{t-s+2u}} \exp\left(
            -\frac{| x-y |^2}{2(t-s+2u)} \right) \\
        &\leq  c \int_0^{\e}
            \frac{du}{\sqrt{t-s+2u}} \exp\left(-
            \frac{\alpha^2 \e}{2(t-s+2u)}\right),
    \end{split}\end{equation*}
    since $| x-y | \geq 2 (1+\alpha) \e^{1/2} \geq \alpha \e^{1/2}$.
    Now we can change variables [$z:=2(t-s+2u)/(\alpha^2\e)$],
    and use the bounds
    $0\le t-s\le \e$ to find that
    \begin{equation}\label{subcaseA:B_1^3}\begin{split}
        \left| B_1^{(3)} \right|  &\leq  c \e^{1/2} \Psi(\alpha),
        \quad\text{where}\quad
        \Psi(\alpha) := \alpha
        \int_0^{6/\alpha^2}z^{-1/2} e^{-1/z}\, dz.
    \end{split}\end{equation}
    Following exactly in the same way, we see that
    \begin{equation}\label{subcaseA:B_2^3}
\left| B_2^{(3)} \right|  \leq  c \e^{1/2} \Psi(\alpha).
    \end{equation}

    We can combine \eqref{subcaseA:B_1^1},
    \eqref{subcaseA:B_1^2} as follows:
     \begin{equation}\label{subcaseA:B_1}
         \E \left[ \sup_{\|\xi\|=1} \left| B_1^{(1)}
         + B_1^{(2)}  \right|^q \right] \le
         c(q) \e^q.
     \end{equation}
     On the other hand, we will see in Lemmas \ref{lem:B_2^1} and \ref{lem:B_2^2} below that
     \begin{equation}\label{subcaseA:B_2}
         \E \left[ \sup_{\| \xi \| =1} \left| B_2^{(1)}
         + B_2^{(2)} \right|^q \right]
        \le c(q)\e^{3q/4}.
     \end{equation}
     Now, by (\ref{subcaseA:A1A2A3}), (\ref{subcaseA:B_3}), (\ref{subcaseA:B_1^3}), (\ref{subcaseA:B_2^3}),
     (\ref{subcaseA:B_1}) and (\ref{subcaseA:B_2}),
     \begin{equation*}\begin{split}
        &\inf_{\|\xi\|=1} \xi^{\sf T}\gamma_Z \xi \\
        &\ge  \left( \frac13 A_1 + A_2 + A_3\right)
            -B_1^{(3)} - B_2^{(3)} - \left(
             B_1^{(1)} + B_1^{(2)} + B_2^{(1)}
             + B_2^{(2)} +B_3\right)\\
        &\ge c_1\e^{1/2} - c_2\Psi(\alpha)\e^{1/2}
            - Z_{1,\e},
     \end{split}\end{equation*}
where
     $Z_{1,\e}:=B_1^{(1)} + B_1^{(2)} + B_2^{(1)}
     + B_2^{(2)} +B_3$
     satisfies $\E [|Z_{1,\e}|^q]\le c_1(q)\e^{3q/4}$.
     Because $\lim_{\nu\to\infty}\Psi(\nu)=0$,
     we can choose and fix $\alpha$ so large
     that $c_2\Psi(\alpha)\le c_1/4$ for the $c_1$
     and $c_2$ of the preceding displayed equation. This yields,
     \begin{equation}\begin{split} \label{n50}
        \inf_{\|\xi\|=1} \xi^{\sf T}\gamma_Z \xi
        \ge c\e^{1/2} - Z_{1,\e},
     \end{split}\end{equation}
as in (\ref{RCD:3}) and (\ref{RCD:3a}).
\vskip 12pt
\noindent\emph{Sub-Case B.} In this final (sub-) case we  suppose that
     $\e\le t-s\le |x-y|^2.$ Choose and fix $0<\e<t-s$. During the course of
     our proof of Case 1, we established
     the following:
\begin{equation*}
         \inf_{\|\xi\|=1}
         \xi^{\sf T}\gamma_Z\xi\ge \min\left(
         c\e^{\frac12+\eta} - Y_{1,\e} \,,
         c\e^{1/2} - Y_{2,\e}\right),
\end{equation*}
where
\begin{equation*}
         \E\left[ \left| Y_{1,\e}\right|^q\right] \le c(q)\e^{q}
         \quad\text{and}\quad
         \E\left[ \left| Y_{2,\e}\right|^q\right] \le c(q)\e^{q(\frac12+\eta)}.
\end{equation*}
     See \eqref{goal:J2J1} and \eqref{goal:J2J1:UB}.
     Consider this in conjunction with \eqref{RCD:3} to find that
     for all $0<\e<\frac14(1+\alpha)^{-2}|x-y|^2$,
\begin{equation*}
         \inf_{\|\xi\|=1} \xi^{\sf T}\gamma_Z\xi
         \ge \min\left( c \e^{\frac12+\eta} - Y_{1,\e} ~,~
         c \e^{1/2} - Y_{2,\e}- Z_{1,\e}\mathbf{1}_{
         \{t-s<\e\}}\right).
\end{equation*}
Because of this and \eqref{RCD:3a}, Proposition \ref{deter} implies that
\begin{equation*}\begin{split}
        \E\left[ \left( \inf_{\|\xi\|=1} \xi^{\sf T}\gamma_Z\xi
             \right)^{-2pd}\right] & \le c|x-y|^{2(-2dp)(\frac12+\eta)}\\
        &\le c( |t-s|^{1/2} + |x-y|)^{-2dp(1+2\eta)}.
\end{split}\end{equation*}
This concludes the proof of Proposition \ref{pr:small-eigen:1}(a).

     If $t=s$, then Sub--Case B does not arise, and so we get directly from (\ref{n50}) and Proposition \ref{deter} that
\begin{equation*}
\E\left[ \left( \inf_{\|\xi\|=1} \xi^{\sf T}\gamma_Z\xi
             \right)^{-2pd}\right] \le c|x-y|^{-2dp}.
\end{equation*}
This proves (b) and concludes the proof of Proposition~\ref{pr:small-eigen:1}.
\end{proof}

\begin{rema}
If $\sigma$ and $b$ are constant, then $a_i=0$, so $\eta$ can be taken to be $0$.
This gives the correct upper bound in the Gaussian case, which shows that the method of proof of Proposition \ref{pr:small-eigen:1} is rather tight.
\end{rema}

We finally prove three results that we have used in the proof of Proposition~\ref{pr:small-eigen:1}.
\begin{lem}\label{lem:7.7}
Assume ${\bf P1}$.
    For all $T>0$ and $q\ge1$, there exists a constant
    $c=c(q,T)\in(0,\infty)$ such that for every
    $0<\e\le s\le t\le T$ and $x\in[0,1]$,
    \begin{equation*}
        \E\left[ \left( \sum_{k=1}^d \int_{s-\e}^s
        dr \int_0^1 dv\, \sum_{i=1}^d
        a_i^2(k,r,v,t,x) \right)^q \right]
        \le c(t-s+\e)^{q/2}\e^{q/2}.
    \end{equation*}
\end{lem}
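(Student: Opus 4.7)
The plan is to follow closely the argument used to estimate the term $A_1$ in the proof of Proposition \ref{p5}, but to keep careful track of the dependence on both $t-s$ and $\e$. I split $a_i(k,r,v,t,x)=a_i^M(k,r,v,t,x)+a_i^D(k,r,v,t,x)$ into its martingale part (the Walsh stochastic integral in \eqref{a}) and its drift part, and bound the two contributions to the $q$-th moment separately; by the inequality $(a+b)^2 \le 2a^2+2b^2$ and Minkowski, it suffices to handle each.

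For the martingale term, I apply Burkholder's inequality for Hilbert-space-valued martingales (Lemma \ref{valuedm}), viewing the stochastic integral as producing an element of the Hilbert space $H:=\mathbb{R}^d\otimes L^2([s-\e,s]\times[0,1])$ indexed by $(k,r,v)$. After using hypothesis {\bf P1} and the chain rule to estimate $|D^{(k)}_{r,v}\sigma_{ij}(u(\theta,\eta))|\le c\sum_l|D^{(k)}_{r,v}u_l(\theta,\eta)|$, this reduces matters to bounding
\begin{equation*}
\E\left[\left(\int_{s-\e}^t d\theta\int_0^1 d\eta\, G^2_{t-\theta}(x,\eta)\sum_{k,l=1}^d\int_{s-\e}^s dr\int_0^1 dv\,(D^{(k)}_{r,v}u_l(\theta,\eta))^2\right)^q\right],
\end{equation*}
where I have used that $D_{r,v}u_l(\theta,\eta)=0$ for $r>\theta$ to extend the $r$-integration freely to $[s-\e,s]$.

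I then apply H\"older's inequality with respect to the finite measure $G^2_{t-\theta}(x,\eta)\,d\theta\,d\eta$ on $[s-\e,t]\times[0,1]$. By (the analogue of) Lemma \ref{int:G^2}, its total mass is at most $c\sqrt{t-s+\e}$, which yields the prefactor $(t-s+\e)^{(q-1)/2}\cdot(t-s+\e)^{1/2}=(t-s+\e)^{q/2}$. What remains is to bound
\begin{equation*}
\sup_{(\theta,\eta)\in[s-\e,t]\times[0,1]}\E\left[\left(\sum_{k,l=1}^d\int_{s-\e}^s\int_0^1(D^{(k)}_{r,v}u_l(\theta,\eta))^2\,dr\,dv\right)^q\right]\le c\,\e^{q/2},
\end{equation*}
which is the genuine core of the argument. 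This is the same type of estimate as in Lemma \ref{morien}, except that the interval $[s-\e,s]$ over which the Malliavin derivative is restricted is decoupled from the time $\theta$ at which $u$ is evaluated. One proves it by expanding $D^{(k)}_{r,v}u_l(\theta,\eta)=G_{\theta-r}(\eta,v)\sigma_{lk}(u(r,v))+a_l(k,r,v,\theta,\eta)$ from \eqref{deri}; the leading term integrates to $\int_{s-\e}^{s\wedge\theta}(\theta-r)^{-1/2}\,dr\le 2\sqrt{\e}$ uniformly in $\theta$, and the remainder $a_l$ is handled recursively in $q$ via Burkholder, just as in Morien's original argument.

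For the drift part, the analogous chain of estimates goes through with the Cauchy--Schwarz inequality in place of Burkholder: $|a_i^D|^2$ absorbs a harmless extra factor $t-r\le t-s+\e$, which is even better than what is needed. Combining the two contributions yields the desired bound $c(t-s+\e)^{q/2}\e^{q/2}$. The main technical obstacle is the Morien-type estimate in the displayed inequality above on an interval $[s-\e,s]$ lying strictly below $\theta$; once that estimate is in place, the rest of the proof is essentially bookkeeping of the two factors $\sqrt{t-s+\e}$ (from the $G^2$-mass on $[s-\e,t]$) and $\sqrt{\e}$ (from the interval $[s-\e,s]$ in the Malliavin variables).
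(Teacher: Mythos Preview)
Your proposal is correct and follows essentially the same route as the paper: split $a_i$ into its stochastic-integral and drift parts, apply the Hilbert-space Burkholder inequality (Lemma~\ref{valuedm}) for the former and Cauchy--Schwarz for the latter, then use H\"older with respect to the measure $G^2_{t-\theta}(x,\eta)\,d\theta\,d\eta$ on $[s-\e,t]\times[0,1]$ to extract the factor $(t-s+\e)^{q/2}$, and invoke the Morien-type bound to produce $\e^{q/2}$.

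One remark: the ``decoupling'' you flag as the main obstacle is in fact already built into Lemma~\ref{morien} as stated in the paper --- it allows evaluation of $u$ at a time $t\ge s$ while integrating the Malliavin variable over $[s-\e,s]$. When $\theta<s$ you simply use that $D_{r,v}u(\theta,\eta)=0$ for $r>\theta$, so the $r$-integral collapses to $[s-\e,\theta]$ and Lemma~\ref{morien} applies with the smaller window $\theta-(s-\e)\le\e$. So no new argument is needed there; the paper just cites Lemma~\ref{morien} directly.
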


\begin{proof}
    Define
    \begin{equation*}
        A := \sum_{k=1}^d \int_{s-\e}^s
        dr \int_0^1 dv\, \sum_{i=1}^d
        a_i^2(k,r,v,t,x).
    \end{equation*}
Use (\ref{a}) to write
    \begin{equation*}
        \E\left[ |A|^q \right]  \le c\left( \E\left[
            |A_1|^q\right] +\E\left[ |A_2|^q\right] \right),
            \end{equation*}
        where
        \begin{equation*}
        A_1 := \sum_{i,j,k=1}^d
            \int_{s-\e}^s dr \int_0^1 dv \left| \int_r^t \int_0^1
            G_{t-\theta}(x,\eta) D^{(k)}_{r,v}
            \left( \sigma_{ij}(u(\theta,
            \eta))\right)\, W^j(d\theta,d\eta)\right|^2,
            \end{equation*}
            and
            \begin{equation*}
        A_2 := \sum_{i,k=1}^d
            \int_{s-\e}^s
            dr \int_0^1 dv \left| \int_r^t d\theta
            \int_0^1 d\eta\, G_{t-\theta}(x,\eta) D^{(k)}_{r,v}
            \left( b_i (u(\theta,
            \eta))\right)\right|^2.
    \end{equation*}
    We bound the $q$-th moment of $A_1$ and $A_2$
    separately.

    As regards $A_1$, we apply the Burkholder inequality
    for Hilbert-space-valued martingales (Lemma \ref{valuedm})
    to find that
    \begin{equation}\label{eq:A_1:bd}
        \E\left[ |A_1|^q\right] \le c
        \sum_{i,j,k=1}^d
        \E\left[ \left| \int_{s-\e}^t
        d\theta \int_0^1 d\eta \int_{s-\e}^s
        dr \int_0^1 dv \, \Theta^2 \right|^q \right],
    \end{equation}
    where
    \begin{equation*}\begin{split}
        \Theta &:= \mathbf{1}_{\{\theta>r\}}
            G_{t-\theta}(x,\eta) \left|
            D^{(k)}_{r,v}\left( \sigma_{ij}(u(\theta,\eta))\right)
            \right|\\
        &\le c \mathbf{1}_{\{\theta>r\}} G_{t-\theta}
            (x,\eta) \left| \sum_{l=1}^d D^{(k)}_{r,v}
            \left( u_l (\theta,\eta) \right) \right|,
    \end{split}\end{equation*}
thanks to hypothesis {\bf P1}. Thus,
\begin{equation*}
        \E\left[ |A_1|^q\right] \le c\sum_{k=1}^d
            \E\Bigg[ \Bigg| \int_{s-\e}^t
            d\theta \int_0^1 d\eta \,
            G^2_{t-\theta}(x,\eta) \int_{s-\e}^{s\wedge \theta}
            dr \int_0^1 dv \, \left(
            \sum_{l=1}^d D^{(k)}_{r,v}
            \left( u_l(\theta,\eta)\right) \right)^2 \Bigg|^q \Bigg].
\end{equation*}
    We apply H\"older's inequality with respect to the measure
    $G^2_{t-\theta}(x,\eta)\, d\theta\, d\eta$ to find that
    \begin{equation}\label{A10}\begin{split}
        \E\left[ |A_1|^q\right] &\le c \left(
            \int_{s-\e}^t d\theta\int_0^1 d\eta\,
            G^2_{t-\theta}(x,\eta) \right)^{q-1} \\
        &\qquad\ \qquad \times\int_{s-\e}^t
            d\theta \int_0^1 d\eta \,
            G^2_{t-\theta}(x,\eta) \sum_{k=1}^d
            \E\left[ \left| \int_{s-\e}^{s\wedge\theta}
            dr\int_0^1 dv\,  \Upsilon^2 \right|^q \right],
    \end{split}\end{equation}
where $\Upsilon := \sum_{l=1}^d D^{(k)}_{r,v} ( u_l(\theta,\eta))$. Lemma \ref{(A.5)} assures us that
    \begin{equation}\label{A11}
        \left( \int_{s-\e}^t d\theta\int_0^1 d\eta\,
        G^2_{t-\theta}(x,\eta) \right)^{q-1} \le
        c (t-s+\e)^{(q-1)/2}.
    \end{equation}
    On the other hand, Lemma \ref{morien} implies that
    \begin{equation*}\begin{split}
        \sum_{k=1}^d \E\left[ \left| \int_{s-\e}^{s\wedge\theta}
        dr\int_0^1 dv\,  \Upsilon^2 \right|^q \right]
        \le c\e^{q/2},
    \end{split}\end{equation*}
    where $c\in(0,\infty)$ does not depend on
    $(\theta,\eta,s,t,\e,x)$. Consequently,
    \begin{equation}\label{A12}\begin{split}
        &\int_{s-\e}^t d\theta \int_0^1 d\eta \,
            G^2_{t-\theta}(x,\eta)
            \sum_{k=1}^d
            \E\left[ \left| \int_{s-\e}^{s\wedge\theta}
            dr\int_0^1 dv\,  \Upsilon^2 \right|^q \right]\\
        &\le c\e^{q/2}\int_{s-\e}^t
            d\theta \int_0^1 d\eta \,
            G^2_{t-\theta}(x,\eta)\\
        &\le c\e^{q/2} (t-s+\e)^{1/2}.
    \end{split}\end{equation}
    Equations \eqref{A10}, \eqref{A11}, and \eqref{A12}
    together imply that
    \begin{equation}\label{A1:UB}
        \E\left[ |A_1|^q\right] \le c (t-s+\e)^{q/2}
        \e^{q/2}.
    \end{equation}
    This is the desired bound for the $q$-th moment
    of $A_1$. Next we derive a similar bound for
    $A_2$. This will finish the proof.
    By the Cauchy--Schwarz
    inequality
    \begin{equation*}
        \E\left[ |A_2|^q\right]
        \le c(t-s+\e)^q
             \sum_{i,k=1}^d
            \E\left[ \left| \int_{s-\e}^s
            dr \int_0^1 dv \int_r^t d\theta
            \int_0^1 d\eta\, \Phi^2 \right|^q \right],
    \end{equation*}
    where $\Phi:=G_{t-\theta}(x,\eta) |D^{(k)}_{r,v}\left(b_i(u(\theta,\eta))\right)|$.
    From here on,
    the $q$-th moment of $A_2$ is estimated as that
    of $A_1$ was; cf.\ \eqref{eq:A_1:bd}, and this yields
    $\E[|A_2|^q]\le c(t-s+\e)^{3q/2}\e^{q/2}$.
    This completes the proof.
\end{proof}

\begin{rema}
    It is possible to prove that $\E[|A_1|]$ is
    \emph{at least} a constant times
    $(t-s+\e)^{1/2} \e^{1/2}$. In this sense, the preceding
    result is not improvable.
\end{rema}

\begin{lem}\label{lem:B_2^1}
Assume ${\bf P1}$. Fix $T>0$ and $q \geq 1$. Then there exists $c=c(q,T)$ such that for all $x\in[0,1]$, $0\le s\le t \leq T$,
    and $\e\in(0,1)$,
    \begin{equation*}
        \E\left[ \sup_{\mu\in\R^d:\,
        \|\mu\|\le 1}\left| B_2^{(1)}\right|^q
        \right]\le c \e^{3q/4}.
    \end{equation*}
\end{lem}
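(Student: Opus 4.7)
The plan is to exploit the product structure of $B_2^{(1)}$ by a Cauchy--Schwarz split that separates a purely deterministic factor (the product $G_{t-r}(x,v)\sigma_{ik}(u(r,v))$) from the ``stochastic'' factor involving $a_i(k,r,v,t,x)$. I will bound the deterministic factor by a direct computation on the heat kernel, and the stochastic factor by the moment estimate in Lemma \ref{lem:7.7}. The key point is that the deterministic factor has a $1/\sqrt{t-s+\e}$ gain that exactly cancels the $(t-s+\e)^{1/4}$ loss coming from Lemma \ref{lem:7.7}, which is what allows the bound to hold uniformly in $t-s$ without any sub-case splitting.

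\noindent\textbf{Step 1 (Cauchy--Schwarz).} Applying Cauchy--Schwarz in the $(r,v)$-integral for each $k$, and then Cauchy--Schwarz on the sum over $k$, I get
\begin{equation*}
\left|B_2^{(1)}\right| \le 2\sqrt{D}\cdot\sqrt{S},
\end{equation*}
where
\begin{equation*}
D := \sum_{k=1}^d \int_{s-\e}^s dr\int_{x-\sqrt\e}^{x+\sqrt\e} dv\, \Bigl(\sum_{i=1}^d \mu_i G_{t-r}(x,v)\sigma_{ik}(u(r,v))\Bigr)^2,
\end{equation*}
\begin{equation*}
S := \sum_{k=1}^d \int_{s-\e}^s dr\int_{x-\sqrt\e}^{x+\sqrt\e} dv\,\Bigl(\sum_{i=1}^d \mu_i a_i(k,r,v,t,x)\Bigr)^2.
\end{equation*}
By Cauchy--Schwarz in $i$, hypothesis {\bf P1} (boundedness of $\sigma_{ij}$) and $\|\mu\|\le 1$, I obtain $D\le C\int_{s-\e}^s\int_{x-\sqrt\e}^{x+\sqrt\e}G_{t-r}^2(x,v)\,dr\,dv$, and similarly $S\le \|\mu\|^2\sum_{k,i}\int_{s-\e}^s\int_0^1 a_i^2(k,r,v,t,x)\,dr\,dv$, where I enlarged the $v$-integration in $S$ to $[0,1]$ at no cost.

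\noindent\textbf{Step 2 (deterministic factor).} For the heat kernel integral, Lemma \ref{(A.5)} (or a direct Gaussian computation) gives $\int_0^1 G_{t-r}^2(x,v)\,dv \le c(t-r)^{-1/2}$, so
\begin{equation*}
\int_{s-\e}^s\int_{x-\sqrt\e}^{x+\sqrt\e}G_{t-r}^2(x,v)\,dr\,dv \le c\int_{s-\e}^s (t-r)^{-1/2}\,dr = 2c\bigl(\sqrt{t-s+\e}-\sqrt{t-s}\bigr) \le \frac{c\,\e}{\sqrt{t-s+\e}}.
\end{equation*}
Consequently $D^{q/2}\le C\e^{q/2}(t-s+\e)^{-q/4}$, uniformly in $\mu$ with $\|\mu\|\le 1$.

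\noindent\textbf{Step 3 (stochastic factor and combination).} Since $D$ is deterministic and $S\ge 0$, raising to the $q$-th power and taking expectation gives
\begin{equation*}
\E\bigl[\,\sup_{\|\mu\|\le 1}|B_2^{(1)}|^q\bigr] \le 2^q D^{q/2}\,\E\bigl[S^{q/2}\bigr].
\end{equation*}
By Lemma \ref{lem:7.7} applied with $q$ replaced by $q/2$,
\begin{equation*}
\E\bigl[S^{q/2}\bigr] \le \E\Bigl[\Bigl(\sum_{k,i}\int_{s-\e}^s\int_0^1 a_i^2(k,r,v,t,x)\,dr\,dv\Bigr)^{q/2}\Bigr] \le c(t-s+\e)^{q/4}\e^{q/4}.
\end{equation*}
Combining with the bound on $D^{q/2}$ from Step~2, the factors $(t-s+\e)^{\pm q/4}$ cancel and I obtain
\begin{equation*}
\E\bigl[\sup_{\|\mu\|\le 1}|B_2^{(1)}|^q\bigr] \le C\,\e^{q/2}(t-s+\e)^{-q/4}\cdot(t-s+\e)^{q/4}\e^{q/4} = C\e^{3q/4},
\end{equation*}
as required. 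There is no genuine obstacle here once the right Cauchy--Schwarz split is identified; the only subtlety is the careful bookkeeping that produces the exact cancellation of the $(t-s+\e)^{q/4}$ factor, which reflects the fact that the ``good'' heat-kernel localization on $[x-\sqrt\e,x+\sqrt\e]$ is not needed for Step 2 but matters when comparing to the sharper sub-case estimates used elsewhere in the paper.
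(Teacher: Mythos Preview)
Your proof is correct and follows essentially the same approach as the paper: a Cauchy--Schwarz split into a heat-kernel factor bounded via Lemma~\ref{int:G^2} (which you recompute directly) and a stochastic factor bounded via Lemma~\ref{lem:7.7}, with the $(t-s+\e)^{\pm q/4}$ cancellation. One minor imprecision: in Step~3 you write ``Since $D$ is deterministic,'' but $D$ involves $\sigma_{ik}(u(r,v))$ and is random; what you actually use (and have already established in Steps~1--2) is that $D$ admits a \emph{deterministic} upper bound uniform in $\mu$, which is all that is needed to pull it outside the expectation.
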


\begin{proof}
    Define
    \begin{equation*}
        \hat{B}_2^{(1)}(k,i) := \int_{s-\e}^s dr
        \int_{x-\sqrt\e}^{x+\sqrt\e} dv\,
        G_{t-r}(x,v) \left| a_i(k,r,v,t,x) \right|.
    \end{equation*}
    Then, by the Cauchy--Schwarz inequality,
    \begin{equation} \label{hatB21:1}
        \E\left[ \left| \hat{B}_2^{(1)}(k,i) \right|^q\right]
        \le J_1 J_2,
    \end{equation}
    where
    \begin{equation*}\begin{split}
        &J_1 := \left( \int_{s-\e}^s dr \int_{x-\sqrt\e}^{x+\sqrt\e}
            dv\, G_{t-r}^2 (x,v)\right)^{q/2},\\
        &J_2 := \E\left[ \left( \int_{s-\e}^s dr
            \int_{x-\sqrt\e}^{x+\sqrt\e} dv\,
            a_i^2(k,r,v,t,x) \right)^{q/2} \right].
    \end{split}\end{equation*}
    On one hand, according to Lemma \ref{int:G^2},
    \begin{equation}\label{hatB21:2a}
        J_1\le c\frac{\e^{q/2}}{(t-s+\e)^{q/4}}.
    \end{equation}
    On the other hand, Lemma \ref{lem:7.7} assures us that
    \begin{equation}\label{hatB21:3}
        J_2 \le c' (t-s+\e)^{q/4}\e^{q/4}.
    \end{equation}
    By combining \eqref{hatB21:1},
    \eqref{hatB21:2a}, and \eqref{hatB21:3},
    we find that $\E[| \hat{B}_2^{(1)} (k,i) |^q] \le c \e^{3q/4}$
    for a constant $c\in(0,\infty)$ that does not depend
    on $\e$. By hypothesis {\bf P1},
    \begin{equation}\label{hathat}
        \E\left[\sup_{\mu\in\R^d:\ \|\mu\|\le 1}
        \left| B_2^{(1)}\right|^q
        \right] \le c\sum_{k=1}^d \E\left[ \sum_{i=1}^d\left|
        \hat{B}_2^{(1)}(k,i)\right|^q
         \right]\le c\e^{3q/4},
    \end{equation}
    as asserted.
\end{proof}

\begin{lem}\label{lem:B_2^2}
Assume ${\bf P1}$. Fix $T>0$ and $q \ge 1$. Then there exists $c=c(q,T)$ such that for any $x\in[0,1]$, $0\le s\le t \le T$,
    and $\e\in(0,1)$,
    \begin{equation*}
        \E\left[ \sup_{\xi=(\lambda,\mu)\in\R^{2d}:\,
        \|\xi\|= 1}\left| B_2^{(2)}\right|^q
        \right]\le c \e^{3q/4}.
    \end{equation*}
\end{lem}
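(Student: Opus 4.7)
The plan is to imitate the proof of Lemma \ref{lem:B_2^1} almost verbatim, with the only essential difference that the $a_i$-factor is now evaluated at $(s,y)$ rather than at $(t,x)$. First I would exploit hypothesis \textbf{P1} (boundedness of $\sigma_{ik}$) together with the bounds $\|\mu\|\le 1$ and $\|\lambda-\mu\|\le \|\lambda\|+\|\mu\|\le 2$, which are both forced by the constraint $\|\xi\|^2=\|\lambda\|^2+\|\mu\|^2=1$. A termwise Cauchy--Schwarz in the $i$-index then gives, pointwise in $\xi$ with $\|\xi\|=1$,
\begin{equation*}
|B_2^{(2)}|\;\le\; c\sum_{k,i=1}^d \hat{B}_2^{(2)}(k,i),\qquad \hat{B}_2^{(2)}(k,i):=\int_{s-\e}^s dr\int_{x-\sqrt\e}^{x+\sqrt\e} dv\,G_{t-r}(x,v)\,|a_i(k,r,v,s,y)|.
\end{equation*}
Since the right-hand side does not depend on $\xi$, the supremum can be moved inside the expectation at no cost.

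Next I would apply the Cauchy--Schwarz inequality with respect to $dr\,dv$ to the definition of $\hat{B}_2^{(2)}(k,i)$ and take $q/2$th powers, splitting $\E[|\hat{B}_2^{(2)}(k,i)|^q]\le J_1\cdot J_2$ with
\begin{equation*}
J_1:=\left(\int_{s-\e}^s dr\int_{x-\sqrt\e}^{x+\sqrt\e} dv\,G^2_{t-r}(x,v)\right)^{q/2},\quad J_2:=\E\!\left[\left(\int_{s-\e}^s dr\int_{x-\sqrt\e}^{x+\sqrt\e} dv\,a_i^2(k,r,v,s,y)\right)^{q/2}\right].
\end{equation*}
By Lemma \ref{int:G^2}, exactly as in the proof of Lemma \ref{lem:B_2^1}, one obtains $J_1\le c\,\e^{q/2}/(t-s+\e)^{q/4}$. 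For $J_2$ I would first enlarge the spatial integration from $[x-\sqrt\e,x+\sqrt\e]$ to $[0,1]$ (legitimate because the integrand is nonnegative) and then invoke Lemma \ref{lem:7.7} applied at the point $(s,y)$ in place of $(t,x)$, i.e.\ in the degenerate ``$t=s$'' instance of that lemma. The hypothesis $\e\le s$ of Lemma \ref{lem:7.7} is ensured for all sufficiently small $\e$ because $s$ ranges in a compact subinterval of $(0,T]$; this is absorbed into the constants. The lemma yields $J_2\le c\,\e^{q/2}$.

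Combining the two estimates gives
\begin{equation*}
\E\!\left[|\hat{B}_2^{(2)}(k,i)|^q\right]\;\le\; \frac{c\,\e^q}{(t-s+\e)^{q/4}}\;\le\; c\,\e^{3q/4},
\end{equation*}
where the final inequality is the crude bound $t-s+\e\ge\e$. Summing over the finitely many indices $(k,i)$ gives the claim. The only conceptual point worth flagging is the contrast with Lemma \ref{lem:B_2^1}: there the factor $(t-s+\e)^{q/4}$ provided by $J_2$ exactly cancels the $(t-s+\e)^{-q/4}$ of $J_1$; here no such compensating power is available because the $a_i$-factor is evaluated at the earlier time $s$, so the cancellation must be replaced by $t-s+\e\ge\e$. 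Happily, this sacrifice still produces the correct target exponent $3q/4$, which is why the argument closes without having to strengthen Lemma \ref{lem:7.7}. I expect this is the only mildly delicate step; the rest is bookkeeping parallel to Lemma \ref{lem:B_2^1}.
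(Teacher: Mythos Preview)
Your proposal is correct and follows essentially the same approach as the paper's proof: the paper defines the same $\hat{B}_2^{(2)}(k,i)$, applies the same Cauchy--Schwarz split into $J_1$ and $J_2$, bounds $J_1\le c\,\e^{q/2}/(t-s+\e)^{q/4}\le c'\e^{q/4}$ via Lemma~\ref{int:G^2}, bounds $J_2\le c''\e^{q/2}$ via Lemma~\ref{lem:7.7} with $t=s$, and then appeals to the triangle inequality as in~\eqref{hathat}. The only difference is cosmetic---the paper simplifies $J_1$ before multiplying, whereas you multiply first and then use $t-s+\e\ge\e$; your closing remark contrasting the cancellation mechanism with Lemma~\ref{lem:B_2^1} is a nice observation that the paper leaves implicit.
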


\begin{proof}
    Define
    \begin{equation*}
        \hat{B}_2^{(2)}(k,i) := \int_{s-\e}^s dr
        \int_{x-\sqrt\e}^{x+\sqrt\e} dv\,
        G_{t-r}(x,v) \left| a_i(k,r,v,s,y) \right|.
    \end{equation*}
    Then, by the Cauchy--Schwarz inequality,
    \begin{equation*}
        \E\left[ \left| \hat{B}_2^{(2)}(k,i) \right|^q\right]
        \le J_1 J_2,
    \end{equation*}
    where
    \begin{equation*}\label{hatB21:2}\begin{split}
        &J_1  := \left( \int_{s-\e}^s dr \int_{x-\sqrt\e}^{x+\sqrt\e}
            dv\, G_{t-r}^2 (x,v)\right)^{q/2},\\
        &J_2 :=\E\left[ \left( \int_{s-\e}^s dr
            \int_{x-\sqrt\e}^{x+\sqrt\e} dv\,
            a_i^2(k,r,v,s,y) \right)^{q/2} \right].
    \end{split}\end{equation*}
According to (\ref{hatB21:2a}),
    \begin{equation*}
        J_1 \le c\frac{\e^{q/2}}{(t-s+\e)^{q/4}}  \le c'\e^{q/4}.
    \end{equation*}
    On the other hand, Lemma \ref{lem:7.7}, with $t=s$, assures us that
    $J_2\le c''\e^{q/2}$.  It follows that the $q$-th
    absolute moment of
    $\hat{B}^{(2)}_2(k,i)$ is at most $c\e^{3q/4}$. An appeal
    to the triangle inequality finishes the proof; see \eqref{hathat}
    where a similar argument was worked out in detail.
\end{proof}

\subsubsection{Large Eigenvalues}

\begin{prop} \label{large}
Assume ${\bf P1}$ and ${\bf P2}$.
Fix $T>0$ and $p>1$. Then there exists $C=C(p,T)$ such that for all $0\leq s < t \leq T$ with $t-s < \frac{1}{2}$,
$x,y  \in (0,1)$, $x \neq y$,
\begin{equation*}
\E \biggl[\biggl( \prod_{i=1}^{d} (\xi^i)^{T}
\gamma_Z \xi^i \biggr)^{-p} \biggr] \leq C,
\end{equation*}
where $\xi^1,...,\xi^d$ are the vectors from Lemma \ref{pertur}.
\end{prop}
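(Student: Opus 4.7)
The plan is to obtain a uniform-in-$i$ lower bound of the form $\lambda_i := (\xi^i)^T\gamma_Z\xi^i \ge c\,\nu_s - C\,R$, where $\nu_s$ is the smallest eigenvalue of $\gamma_{u(s,y)}$ (controlled by Proposition~\ref{p5}) and $R$ is a small perturbation of order $\delta:=|t-s|^{1/2}+|x-y|$ in $L^p$ (controlled by Proposition~\ref{3p1}). A short splitting argument then yields the required uniform bound on $\E[(\prod_i\lambda_i)^{-p}]$.

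First I would write $\xi^i=\alpha_i\phi^i+\beta_i\psi^i$ with $\phi^i=(\tilde\lambda^i,\mathbf{0})$, $\psi^i=(\mathbf{0},\tilde\mu^i)$, and $\alpha_i\ge\alpha_0$. Using the Gram representation of the Malliavin matrix, a direct computation gives
\begin{equation*}
\lambda_i \;=\; \|\alpha_i A_i + \beta_i B_i\|_{\mathcal{H}}^2,
\end{equation*}
where $A_i := \sum_{j=1}^d \tilde\lambda^i_j\, D(u_j(s,y))$ and $B_i := \sum_{k=1}^d \tilde\mu^i_k\, D(u_k(t,x)-u_k(s,y))$. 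The elementary inequality $\|v+w\|^2\ge\tfrac{2}{3}\|v\|^2-2\|w\|^2$ (from AM--GM on the cross term) then yields
\begin{equation*}
\lambda_i \;\ge\; \tfrac{2}{3}\alpha_i^2\|A_i\|^2 - 2\beta_i^2\|B_i\|^2 \;\ge\; \tfrac{2}{3}\alpha_0^2\,\nu_s - 2R,
\end{equation*}
where $\nu_s:=\inf_{\|\eta\|=1}\eta^T\gamma_{u(s,y)}\eta\le\|A_i\|^2$ and $R:=\sum_{k=1}^d\|D(u_k(t,x)-u_k(s,y))\|_{\mathcal{H}}^2\ge\|B_i\|^2$ (by Cauchy--Schwarz in $\R^d$). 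Both bounds are uniform in the random unit vectors $\tilde\lambda^i,\tilde\mu^i$, which is the key point.

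Next I split on $\mathcal{E}:=\{R\le\alpha_0^2\nu_s/6\}$. On $\mathcal{E}$, every $\lambda_i\ge\alpha_0^2\nu_s/3$, so
\begin{equation*}
\E\bigl[(\textstyle\prod_i\lambda_i)^{-p}\mathbf{1}_{\mathcal{E}}\bigr] \;\le\; C\,\E[\nu_s^{-dp}],
\end{equation*}
which is finite and uniform in $(s,y,t,x)$ by (the proof of) Proposition~\ref{p5}. On $\mathcal{E}^c$, Cauchy--Schwarz gives
\begin{equation*}
\E\bigl[(\textstyle\prod_i\lambda_i)^{-p}\mathbf{1}_{\mathcal{E}^c}\bigr] \;\le\; \E\bigl[(\textstyle\prod_i\lambda_i)^{-2p}\bigr]^{1/2}\P(\mathcal{E}^c)^{1/2}.
\end{equation*}
For the first factor, use $\prod_i\lambda_i\ge(\inf_{\|\eta\|=1}\eta^T\gamma_Z\eta)^d$ together with Proposition~\ref{pr:small-eigen:1}(a) to get a bound of the form $C\delta^{-dp(1+\eta)}$. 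For the second, Proposition~\ref{3p1} gives $\E[R^q]\le C\delta^q$ for every $q\ge 1$, so by Markov and Cauchy--Schwarz (with $\E[\nu_s^{-q}]<\infty$),
\begin{equation*}
\P(\mathcal{E}^c) \;=\; \P\bigl(R/\nu_s > \alpha_0^2/6\bigr) \;\le\; C_q\,\delta^q.
\end{equation*}
Choosing $q=2dp(1+\eta)$ makes the product of the two factors a uniform constant in $\delta\in(0,2]$.

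The main obstacle is arranging a \emph{uniform-in-$i$} lower bound on $\lambda_i$ despite the random (and essentially uncontrolled) directions $\tilde\lambda^i,\tilde\mu^i$; the $\mathcal{H}$-valued decomposition $\lambda_i=\|\alpha_i A_i+\beta_i B_i\|^2$ is what makes this possible, by allowing $\|A_i\|^2$ to be bounded below by the smallest eigenvalue of $\gamma_{u(s,y)}$ and $\|B_i\|^2$ to be bounded above by the direction-free quantity $R$. The remaining splitting step is a standard trade-off between the a priori bad bound $\nu_{\min}(\gamma_Z)^{-d}$ available on $\mathcal{E}^c$ and the super-polynomial smallness of $\P(\mathcal{E}^c)$, which is what necessitates having moments of $R$ of arbitrary order.
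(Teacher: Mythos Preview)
Your approach is correct and takes a genuinely different, more streamlined route than the paper's.

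The paper proves Proposition~\ref{large} by a direct analysis: for $\xi$ with $\alpha\ge\alpha_0$ it expands $\xi^{\sf T}\gamma_Z\xi$ via the explicit integral representation of the Malliavin derivatives, restricts to small time windows $[s-\e,s]$ and $[(t-\e)\vee s,t]$, and carries out an intricate case analysis (Case~1: $t-s\le\e$; Case~2: $t-s>\e$, with further sub-cases) using Green-kernel identities such as the semigroup property $\int_0^1 G_{s-r}(y,v)G_{t-r}(x,v)\,dv=G_{s+t-2r}(x,y)$ and the decomposition $G=p+H$, to show $\inf_{\alpha\ge\alpha_0}\xi^{\sf T}\gamma_Z\xi\ge c\e^{1/2}-(\text{error})$ with errors of order $\e^{3/4}$ or $\e^{\frac12+\eta}$; Proposition~\ref{deter} (with a fixed $\e_0$) then gives the uniform bound. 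Your argument bypasses all of this by working at the level of the Gram structure: the identity $(\xi^i)^{\sf T}\gamma_Z\xi^i=\|\alpha_iA_i+\beta_iB_i\|_{\mathcal H}^2$ reduces the problem to the smallest eigenvalue $\nu_s$ of $\gamma_{u(s,y)}$ (controlled in the proof of Proposition~\ref{p5}) and the increment norm $R$ (controlled by Proposition~\ref{3p1}); the splitting on $\{R\le\alpha_0^2\nu_s/6\}$ then recycles Proposition~\ref{pr:small-eigen:1} on the bad set. This is logically sound since Proposition~\ref{pr:small-eigen:1} is proved independently of Proposition~\ref{large}. The trade-off is that your proof is not self-contained---it consumes the small-eigenvalue estimate as a black box, so the two propositions can no longer be proved in parallel---whereas the paper's argument stands on its own, at the cost of substantially more computation with the heat kernel.
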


\begin{proof}
Let $\xi^1,\ldots,\xi^d$, written
as in (\ref{base}), be
such that $\alpha_1 \geq \alpha_0,\ldots,\alpha_d
\geq \alpha_0$ for some
$\alpha_0>0$. In order to simplify the exposition,
we assume that $0 <
\alpha=\alpha_1= \cdots = \alpha_d \leq 1$,
since the general
case follows along the same lines.
Let $0< \e < s\leq t$.
As in the proof of Proposition \ref{pr:small-eigen:1}, we note first that
$\prod_{i=1}^{d} (\xi^i)^{\sf T}  \gamma_Z
\xi^i$ is bounded below by
\begin{equation}\label{large1}\begin{split}
    &\sum_{k=1}^d
        \int_{s-\epsilon}^s dr \int_0^1 dv \Bigg( \sum_{i=1}^d
        \biggl[ \bigg(\alpha
        \tilde{\lambda}^i G_{s-r}(y,v)\\
    &\hskip.5in +\tilde{\mu}^i \sqrt{1-\alpha^2}
        \left( G_{t-r}(x,v)-G_{s-r}(y,v) \right)\bigg)
        \sigma_{ik}(u(r,v))\\
    &\hskip.5in + \alpha \tilde{\lambda}^i
        a_i(k,r,v,s,y)\\
    &\hskip.5in +\tilde{\mu}^i \sqrt{1-\alpha^2}
        \left( a_i(k,r,v,t,x)-a_i(k,r,v,s,y) \right) \biggr] \Bigg)^2 \\
    &\quad + \sum_{k=1}^d \int_{s \vee (t-\epsilon)}^t
        dr \int_0^1 dv \Bigg( \sum_{i=1}^d \biggl[ \tilde{\mu}^i
        \sqrt{1-\alpha^2}\ G_{t-r}(x,v) \sigma_{ik}(u(r,v)) \\
    & \hskip.5in  + \tilde{\mu}^i \sqrt{1-\alpha^2}\
        a_i(k,r,v,t,x) \biggr] \Bigg)^2.
\end{split}\end{equation}
We intend to use Proposition \ref{deter} with $\varepsilon_0 >0$ fixed, so we seek lower bounds for this expression for $0 < \varepsilon < \varepsilon_0$.
\vskip 12pt

\noindent{\bf Case 1.} $t-s \leq \e$.
Then, by (\ref{eq:23-2}), the expression in (\ref{large1}) is bounded below by
\begin{equation*}
    \frac{2}{3} (f_1(s,t,\epsilon,\alpha,
    \tilde \lambda,\tilde  \mu,x,y)+f_2(s,t,\epsilon,\alpha,\tilde \lambda,\tilde \mu,x,y)) - 2 I_\e,
\end{equation*}
where, from hypothesis {\bf P2},
\begin{align}\label{pq1}
    f_1  &\geq c \rho^2 \int_{s-\epsilon}^s dr \int_0^1 dv
        \left\| \alpha \tilde{\lambda} G_{s-r}(y,v)+
        \sqrt{1-\alpha^2}\ \tilde{\mu}
        (G_{t-r}(x,v)-G_{s-r}(y,v)) \right\|^2, \\
    \label{pq2}
    f_2 & \geq c \rho^2 \int_{s \vee (t-\epsilon)}^t dr \int_0^1 dv
        \, \bigg\| \tilde{\mu} \sqrt{1-\alpha^2}\ G_{t-r}(x,v)
        \bigg\|^2,
\end{align}
and $I_\e = I_{1,\e} +  I_{2,\e} +  I_{3,\e}$, where
\begin{equation*}\begin{split}
    I_{1,\e}  &:= \sum_{k=1}^d \int_{s-\e}^s dr \int_0^1 dv\, \left(\sum_{i=1}^d
        \left[\alpha \tilde\lambda^i - \tilde\mu_i \sqrt{1 - \alpha^2}
        \right] a_i(k,r,v,s,y)
        \right)^2, \\
    I_{2,\e} &:= \sum_{k=1}^d \int_{s-\e}^s dr \int_0^1 dv\, \left(\sum_{i=1}^d
        \tilde\mu_i \sqrt{1 - \alpha^2}\ a_i(k,r,v,t,x)   \right)^2,\\
    I_{3,\e} &:= \sum_{k=1}^d \int_{t-\e}^t dr \int_0^1 dv\, \left(\sum_{i=1}^d
        \tilde\mu_i \sqrt{1 - \alpha^2}\ a_i(k,r,v,t,x) \right)^2.
\end{split}\end{equation*}
There are obvious similarities between the terms $I_{1,\e}$ and $B_1^{(1)}$ in (\ref{B11}). Thus, we apply the same method that was used to bound $\E[|B_1^{(1)}|^q]$ to  deduce that $\E[|I_{1,\e}|^q] \leq c(q) \e^{q}$.
 Since $I_{2,\e}$ is similar to $B_1^{(2)}$ from (\ref{B12}) and $t-s \leq \e$, we see using (\ref{subcaseA:B_1^2}) that $\E[|I_{2,\e}|^q] \leq c(q) \e^{q}$. Finally, using the similarity between $I_{3,\e}$ and $B_3$ in (\ref{B3}), we see that $\E[|I_{3,\e}|^q] \leq c(q) \e^q$.

We claim that there exists $\alpha_0>0$, $\epsilon_0>0$ and $c_0>0$ such that
\begin{equation} \label{minoration}\begin{split}
    f_1+f_2 \geq c_0 \sqrt{\epsilon}
    \; \text{ for all } \alpha \in [\alpha_0,1], \, \epsilon \in(0, \epsilon_0],
    \, s,t \in [1,2], \, x,y \in [0,1].
\end{split}\end{equation}
This will imply in particular that for $\e \geq t-s$,
\begin{equation*}
    \prod_{i=1}^d \left( \xi^i \right)^{\sf T}
    \gamma_Z  \xi^i \geq c_0 \e^{1/2} - 2
    I_\e,
 \end{equation*}
 where $\E[| I_\e|^q] \leq c(q) \e^{q}$.

In order to prove (\ref{minoration}), first define
\begin{equation*}
    p_t(x,y) := (4\pi t)^{-1/2}
    e^{-(x-y)^2/(4t)}.
\end{equation*}
In addition, let $g_1(s,t,\epsilon,\alpha,\tilde \lambda,\tilde \mu, x,y)$ and $g_2(s,t,\epsilon,\alpha,\tilde\lambda,\tilde \mu, x,y)$
be defined by the same expressions as the right-hand
sides of (\ref{pq1}) and (\ref{pq2}), but with
$G_{s-r}(x,v)$ replaced by $p_{s-r}(x-v)$, and $\int_0^1$
replaced by $\int_{-\infty}^{+\infty}$.

Observe that $g_1 \geq 0$, $g_2 \geq 0$, and if $g_1=0$,
then for all $v \in \R^d$,
\begin{equation} \label{double}
    \left\| \alpha \,
    p_{s-r}(y-v) \tilde{\lambda} +
    \sqrt{1-\alpha^2}\ (p_{t-r}(x-v) - p_{s-r}(y-v))  \tilde{\mu} \right\|=0.
\end{equation}
If, in addition, $\tilde{\lambda}=\tilde{\mu}$, then we get that for all $v \in \R^d$,
\begin{equation*}
    \left( \alpha-\sqrt{1-\alpha^2}
    \right) p_{s-r}(y-v) + \sqrt{1-\alpha^2} p_{t-r}(x-v) =0.
\end{equation*}
We take Fourier transforms to deduce from this that for all $\xi \in \R^d$,
\begin{equation*}
    \left( \alpha-\sqrt{1-\alpha^2} \right)
    e^{i \xi y} = -\sqrt{1-\alpha^2} e^{i \xi x} e^{(s-t) \xi^2}.
\end{equation*}
If $x=y$, then it follows that $s=t$ and $\alpha-\sqrt{1-\alpha^2}=-\sqrt{1-\alpha^2}$.
Hence, if $\alpha \neq 0$, $x=y$ and $\tilde{\lambda}= \tilde{\mu}$, then $g_1>0$.
We shall make use of this observation shortly.

Because $\| \tilde{\lambda} \| =\| \tilde{\mu} \|=1$,
$f_1$ is bounded below by
\begin{equation*}\begin{split}
    &c \rho^2 \int_{s-\epsilon}^s dr \int_0^1 dv
        \biggl(  \alpha^2 G_{s-r}^2(y,v)+ \left( 1-\alpha^2\right)
        \left(G_{t-r}(x,v)-G_{s-r}(y,v) \right)^2\\
    & \qquad +2 \alpha \sqrt{1-\alpha^2}
        G_{s-r}(y,v) (G_{t-r}(x,v)-G_{s-r}(y,v))
        (\tilde{\lambda} \cdot \tilde{\mu} ) \biggr) \\
    &=  c \rho^2 \int_{s-\epsilon}^s dr \int_0^1 dv
        \biggl( \left( \alpha-\sqrt{1-\alpha^2} \right)^2 G_{s-r}^2(y,v))
        + \left(1-\alpha^2\right) G_{t-r}^2(x,v) \\
    &\qquad +2 \left(\alpha-\sqrt{1-\alpha^2} \right)
        \sqrt{1-\alpha^2} G_{s-r}(y,v) G_{t-r}(x,v) \\
    &\qquad + 2 \alpha \sqrt{1-\alpha^2} G_{s-r}(y,v)
        (G_{t-r}(x,v)-G_{s-r}(y,v)) (\tilde{\lambda} \cdot \tilde{\mu}
        -1)\biggr).
\end{split}\end{equation*}
Recall the semigroup property
\begin{equation} \label{square}
    \int_0^1 dv \, G_{s-r}(y,v) G_{t-r}(x,v)=G_{s+t-2r}(x,y)
\end{equation}
(see \textsc{Walsh} \cite[(3.6)]{Walsh:86}).
We set $h:=t-s$ and change variables
[$\bar{r}:=s-r$] to obtain the following bound:
\begin{equation*}\begin{split}
    &f_1 \geq c \rho^2 \int_{0}^{\epsilon} dr \biggl(
        \left( \alpha-\sqrt{1-\alpha^2} \right)^2 G_{2r}(y,y) +
        \left( 1-\alpha^2 \right) G_{2h+2r}(x,x) \\
    &\qquad \qquad+2 \left( \alpha-\sqrt{1-\alpha^2}
        \right) \sqrt{1-\alpha^2} G_{h+2r}(x,y) \\
    &\qquad \qquad+ 2 \alpha \sqrt{1-\alpha^2} (G_{h+2r}(x,y)-G_{2r}(y,y))
        \left( \tilde{\lambda} \cdot \tilde{\mu} -1 \right)\biggr).
\end{split}\end{equation*}
Recall (\cite[p.318]{Walsh:86}), that
\begin{equation*}
    G_t(x,y)=p_t(x,y)+H_t(x,y),
\end{equation*}
where $H_t(x,y)$ is a continuous function that is
uniformly bounded over $(t,x,y)$ $\in$ $(0,\infty) \times (0,1) \times (0,1)$.
Therefore, $f_1 \geq c \rho^2 \tilde{g}_1 -c \epsilon$, where
\begin{equation*}\begin{split}
    \tilde{g}_1:=&\ \tilde{g}_1(h, \epsilon, \alpha, \tilde{\lambda}, \tilde{\mu},x,y)\\
    =& \int_{0}^{\epsilon} dr \biggl(\left( \alpha-\sqrt{1-\alpha^2} \right)^2
        p_{2r}(y,y) + \left( 1-\alpha^2 \right) p_{2h+2r}(x,x) \\
    &\qquad +2 \left( \alpha-\sqrt{1-\alpha^2} \right)
        \sqrt{1-\alpha^2} p_{h+2r}(x,y) \\
    &\qquad + 2 \alpha \sqrt{1-\alpha^2}
        \left( p_{h+2r}(x,y)-p_{2r}(y,y)\right)
        \left(\tilde{\lambda} \cdot \tilde{\mu}
        -1\right) \biggr).
\end{split}\end{equation*}
We can recognize that
\begin{equation*}\begin{split}
    p_{h+2r}(x,y)-p_{2r}(y,y) &= \frac{\exp(-(x-y)^2/(4(h+2r)))}{\sqrt{4 \pi (h+2r)}}
    -\frac{1}{\sqrt{4 \pi (2r)}}\leq 0.
\end{split}\end{equation*}
Also, $\tilde{\lambda} \cdot \tilde{\mu} -1 \leq 0$. Thus,
\begin{equation*}
    \tilde{g_1} \geq
  \hat{g_1},
\end{equation*}
where
\begin{equation*}\begin{split}
    \hat{g_1} := &\ \hat{g_1}(h, \epsilon, \alpha,x,y) \\
    =& \int_0^{\epsilon} dr \, \biggl(
        \left(\alpha-\sqrt{1-\alpha^2}\right)^2
        p_{2r}(y,y) + \left( 1-\alpha^2 \right) p_{2h+2r}(x,x)\\
    &\qquad   +2\left(\alpha-\sqrt{1-\alpha^2}\right)
        \sqrt{1-\alpha^2}  p_{h+2r}(x,y) \biggl).
\end{split}\end{equation*}
Therefore,
\begin{equation*}\begin{split}
    &\hat{g_1}= \int_0^{\epsilon} dr
    \biggl((\alpha-\sqrt{1-\alpha^2})^2 \frac{1}{\sqrt{8 \pi r}} +
    \left( 1-\alpha^2\right) \frac{1}{\sqrt{8 \pi (h+r)}} \\
    & \qquad \qquad + 2 (\alpha-\sqrt{1-\alpha^2}) \sqrt{1-\alpha^2}
    p_{h+2r}(x,y) \biggr).
\end{split}\end{equation*}

On the other hand, by (\ref{square}) above,
\begin{equation*}\begin{split}
    f_2&\geq \int_0^{\e \wedge (t-s)} dr \left( 1-\alpha^2\right) G_{2r}(y,y)\\
    &\geq \tilde{g}_2:= \int_0^{\e \wedge h} dr\, \left(1-\alpha^2 \right)
        p_{2r}(y,y)-C \e \\
    &=\left( 1-\alpha^2\right) \sqrt{\e \wedge h} - C \e.
\end{split}\end{equation*}
Finally, we conclude that
\begin{equation*}\begin{split}
    f_1+f_2 &\geq \hat{g_1} + \tilde{g}_2 - 2 C \e \\
    &= \left(\alpha-\sqrt{1-\alpha^2} \right)^2 \frac{\sqrt{\e}}{\sqrt{2
        \pi}} + \frac{1-\alpha^2}{\sqrt{2 \pi}}
        \left(\sqrt{h+\e}-\sqrt{h} \right) \\
    &\qquad +2\left(\alpha-\sqrt{1-\alpha^2}\right) \sqrt{1-\alpha^2}
        \int_0^{\e} dr \, p_{h+2r}(x,y)\\
    &\qquad + \frac{1-\alpha^2}{\sqrt{2
        \pi}} \sqrt{\e \wedge h} - 2C \e.
\end{split}\end{equation*}
Now we  consider two different cases.

\vskip 12pt

\noindent\textbf{Case (i).}
Suppose  $\alpha-\sqrt{1-\alpha^2} \geq 0$, that is, $\alpha \geq
2^{-1/2}$. Then
\begin{equation*}
    \epsilon^{-1/2} \left( \hat{g_1} + \tilde{g}_2 \right) \geq
    \phi_1\left(\alpha\,, \frac{h}{\e} \right) - 2 C \e^{1/2},
\end{equation*}
where
\begin{equation*}
    \phi_1(\alpha\,,z) := \frac{1}{\sqrt{2 \pi}}
        \Bigg( \left(\alpha-\sqrt{1-\alpha^2}\right)^2 +
        \left( 1-\alpha^2\right)
        \frac{1}{\sqrt{1+z}+\sqrt{z}}
        + \left( 1-\alpha^2\right) \sqrt{1 \wedge z} \Bigg).
\end{equation*}
Clearly,
\begin{equation*}\begin{split}
    \inf_{\alpha \geq 2^{-1/2} } \,
        \inf_{z >0} \, \phi_1(\alpha,z) &\geq
        \inf_{\alpha > 2^{-1/2} }
        \frac{\left(\alpha-\sqrt{1-\alpha^2}\right)^2 +
        c_0\left( 1-\alpha^2\right)}{\sqrt{2\pi}} \\
    &> \phi_0>0.
\end{split}\end{equation*}
Thus,
\begin{equation*}
    \inf_{\alpha \geq 2^{-1/2}, \, h \geq 0, \, 0 <
    \e \leq \epsilon_0} \, \e^{-1/2}
    \left(\hat{g_1} + \tilde{g}_2\right) >0.
\end{equation*}

\noindent \textbf{Case (ii).}
Now we consider the case where $\alpha-\sqrt{1-\alpha^2} <
0$, that is, $\alpha < 2^{-1/2}$. In this case, 
\begin{equation*}
    \e^{-1/2} \left( \hat{g_1} + \tilde{g}_2 \right) \geq \psi_1
    \left(\alpha\,, \frac{h}{\e} \right) - 2 C \e^{1/2},
\end{equation*}
where
\begin{equation*}\begin{split}
    &\psi_1(\alpha\,, z) : =\frac{1}{\sqrt{2 \pi}} \Bigg(
        \left( \alpha-\sqrt{1-\alpha^2} \right)^2
        + \left( 1-\alpha^2\right) \frac{1}{\sqrt{1+z}+\sqrt{z}} \\
    & \qquad \qquad \qquad  - 2\left(\sqrt{1-\alpha^2}-\alpha\right)
        \sqrt{1-\alpha^2}
        \frac{\sqrt{2}}{\sqrt{2+z}+\sqrt{z}}+\left( 1-\alpha^2\right) \sqrt{1 \wedge
        z} \Bigg).
\end{split}\end{equation*}
Note that $\psi_1(\alpha, z)>0$ if $\alpha \neq 0$.
This corresponds to the observation
made in the lines following (\ref{double}).
Moreover, for $\alpha
\geq \alpha_0 >0$, $\lim_{z \downarrow 0} \psi_1(\alpha, z) \geq
(2 \pi)^{-1/2} \alpha_0^2$, and
\begin{equation*}
    \lim_{z \uparrow + \infty} \psi_1(\alpha\,, z) \geq \inf_{\alpha>0} \,
    \frac{1}{\sqrt{2 \pi}} \biggl( (\alpha-\sqrt{1-\alpha^2})^2 +
    \left( 1-\alpha^2\right) \biggr)>0.
\end{equation*}
Therefore,
\begin{equation*}
\inf_{\alpha\in[\alpha_0,2^{-1/2}], \, z \geq 0} \,\psi_1(\alpha\,, z) >0.
\end{equation*}
This concludes the proof of the claim (\ref{minoration}).
\vskip 12pt

\noindent{\bf Case 2.} $t-s > \e$.
In accord with (\ref{large1}), we are
interested in
\begin{equation*}
    \inf_{1 \geq \alpha \geq \alpha_0}
    \prod_{i=1}^d \left(\xi^i \right)^{\sf T}
    \gamma_Z \xi^i :=\min(E_{1,\e},E_{2,\e}),
\end{equation*}
where
\begin{equation*}\begin{split}
    E_{1,\e} &:= \inf_{\alpha_0 \leq \alpha \leq \sqrt{1-\e^\eta}}
        \prod_{i=1}^d \left(\xi^i \right)^{\sf T}  \gamma_Z  \xi^i,\\
    E_{2,\e} &:= \inf_{\sqrt{1-\e^\eta} \leq \alpha \leq 1}
        \prod_{i=1}^d \left(\xi^i \right)^{\sf T}  \gamma_Z  \xi^i .
\end{split}\end{equation*}
Clearly,
\begin{equation*}
    E_{1,\e} \geq \frac{2}{3} f_2 - 2 I_{3,\e}.
\end{equation*}
Since $\alpha \leq \sqrt{1-\e^\eta}$
is equivalent to $\sqrt{1-\alpha^2} \geq \e^{\eta/2}$, we use hypothesis
{\bf P2} to deduce that
\begin{equation*}
    f_2 \geq c \rho^2 \e^\eta \int_{t-\e}^t dr
    \int_0^1 dv\, G^2_{t-r}(x,v) \geq
    c \rho^2 \e^{\frac{1}{2}+\eta}.
\end{equation*}
Therefore,
\begin{equation*}
    E_{1,\e} \geq c\rho^2 \e^{\frac{1}{2}+\eta} - I_{3,\e},
\end{equation*}
and we have seen that $I_{3,\e}$ has the desirable property $\E\left[\left| I_{3,\e}\right|^q\right] \leq c(q) \e^q$.

In order to  estimate $E_{2,\e}$, we observe using (\ref{large1}) that
\begin{equation*}
    E_{2,\e} \geq \frac{2}{3} \tilde f_1 - \tilde J_{1,\e} - \tilde J_{2,\e} -
    \tilde J_{3,\e} - \tilde J_{4,\e},
\end{equation*}
where
\begin{equation*}\begin{split}
    \tilde f_1 &\geq  \alpha^2 \sum_{k=1}^d \int_{s-\e}^s dr \int_0^1 dv\,
        \left(\sum_{i=1}^d \tilde \lambda^i \sigma_{ik}(u(r,v))\right)^2
         G^2_{s-r}(y,v), \\
    \tilde J_{1,\e} &= 2 \left( 1-\alpha^2\right)   \sum_{k=1}^d
        \int_{s-\e}^s dr \int_0^1
        dv\, \left(\sum_{i=1}^d \tilde \mu_i \sigma_{ik}(u(r,v))\right)^2
        G^2_{t-r}(x,v), \\
    \tilde J_{2,\e} &=  2 \left( 1-\alpha^2\right)   \sum_{k=1}^d
        \int_{s-\e}^s dr \int_0^1
        dv\, \left(\sum_{i=1}^d \tilde \mu_i \sigma_{ik}(u(r,v))\right)^2
        G^2_{s-r}(y,v), \\
    \tilde J_{3,\e} &= 2 \sum_{k=1}^d \int_{s-\e}^s dr \int_0^1 dv\,
        \left(\sum_{i=1}^d \left(\alpha \tilde\lambda^i -
        \tilde\mu_i\sqrt{1-\alpha^2}\right)
        a_i(k,r,v,s,y)\right)^2, \\
    \tilde J_{4,\e} &= 2 \left( 1-\alpha^2\right)
        \sum_{k=1}^d \int_{s-\e}^s dr \int_0^1
        dv\, \left(\sum_{i=1}^d \tilde \mu_i a_i(k,r,v,t,x)\right)^2 .
\end{split}\end{equation*}
Because $\alpha^2 \geq 1 - \e^\eta$ and $\e \leq t-s \leq \frac{1}{2}$,
hypothesis {\bf P2} and Lemma \ref{(A.3)} imply that $\tilde f_1 \geq c \e^{1/2}$. On the other hand,
since $1-\alpha^2 \leq \e^\eta$, we can use hypothesis {\bf P1} and Lemma \ref{int:G^2} to
see that
\begin{equation*}
    \E\left[\left|\tilde J_{1,\e}  \right|^q\right] \leq c(q) \e^{q\eta} \e^{q/2} = c(q)
    \e^{(\frac{1}{2}+\eta)q},
\end{equation*}
and similarly, using Lemma \ref{(A.5)}, $\E[|\tilde J_{2,\e}  |^q] \leq c(q) \e^{(\frac{1}{2}+\eta)q}$. The term
$ \tilde J_{3,\e}$ is equal to $2I_{1,\varepsilon}$, so $\E[\vert \tilde J_{3,\e}\vert^q] \leq c \varepsilon^q$, and $\tilde J_{4,\varepsilon}$ is similar to $B_1^{(2)}$ from (\ref{B12}), so we find using (\ref{EB3}) that
\begin{equation*}
    \E\left[\left|\tilde J_{4,\e}  \right|^q\right]
    \leq c \e^{q\eta} (t-s+\e)^{q/2} \e^{q/2} \leq c
    \e^{(\frac{1}{2}+\eta)q}.
\end{equation*}

We conclude that when $t-s > \e$, then $E_{2,\e} \geq c \e^{1/2} - \tilde J_\e$,
where $\E[| \tilde J_\e |^q] \leq c(q) \e^{(\frac{1}{2}+\eta)q}$. Therefore,
when $t-s > \e$,
 \begin{equation*}
    \inf_{1 \geq \alpha \geq \alpha_0}  \prod_{i=1}^d
    \left(\xi^i\right)^{\sf T}
    \gamma_Z  \xi^i \geq 
    \min\left(c \rho^2 \e^{\frac{1}{2}+\eta} - I_{3,\e} ~,~  c \e^\frac{1}{2}
    - \tilde J_\e\right).
\end{equation*}

Putting together the results of Case 1 and Case 2, we see that for $0 < \e \leq \frac{1}{2}$,
\begin{equation*}
\inf_{\| \xi\| = 1,\, 1 \geq \alpha \geq \alpha_0} \prod_{i=1}^d
\left(\xi^i\right)^{\sf T}  \gamma_Z  \xi^i\geq \min\left(c \rho^2 \e^{\frac{1}{2}+\eta} -
        I_{3,\e},~c\e^\frac{1}{2} - 2 I_\e \mathbf{1}_{\{\e
        \geq t-s \}} - \tilde J_\e \mathbf{1}_{\{\e < t-s \}}\right).
\end{equation*}
We take into account the bounds on moments of $I_{3,\e}$, $I_\e$ and $\tilde
J_\e$, and then use Proposition \ref{deter} to conclude the proof of Proposition \ref{large}.
\end{proof}

\section{Appendix}

On several occasions, we have appealed to the following technical estimates on the Green kernel of the
heat equation.
\begin{lem} \label{(A.1)} \textnormal{\cite[(A.1)]{Bally:98}}
There exists $C>0$ such that for any $0<s<t$ and $x,y \in [0,1]$, $x \neq y$,
\begin{equation*}
G_{t-s}(x,y) \leq C \frac{1}{\sqrt{2 \pi (t-s)}} \exp \biggl( -\frac{\vert x-y \vert^2}{2(t-s)}\biggr).
\end{equation*}
\end{lem}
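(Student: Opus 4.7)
The plan is to use the classical method of images representation of the Neumann heat kernel on $[0,1]$. Setting $p_\tau(z) := (4\pi\tau)^{-1/2}\exp(-z^2/(4\tau))$ for the free Gaussian kernel on $\R$, one has
\begin{equation*}
G_{t-s}(x,y) = \sum_{n\in\Z}\bigl[p_{t-s}(x-y+2n) + p_{t-s}(x+y+2n)\bigr],
\end{equation*}
which is standard (cf.~Walsh, Chap.~3) and follows from extending the Neumann solution on $[0,1]$ by successive even reflections across the endpoints $0$ and $1$. I would substitute this representation and split the sum into the dominant $n=0$ part and the tail $|n|\geq 1$.

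First, for the $n=0$ contribution $p_{t-s}(x-y) + p_{t-s}(x+y)$, I would note that since $x,y \in [0,1]$ we have $(x+y)^2 \geq (x-y)^2$, so $p_{t-s}(x+y) \leq p_{t-s}(x-y)$. Thus the $n=0$ piece is bounded by $2 p_{t-s}(x-y)$, which is of the claimed Gaussian form up to the choice of the multiplicative constant $C$.

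Second, I would control the tail $\sum_{|n|\geq 1}\bigl[p_{t-s}(x-y+2n) + p_{t-s}(x+y+2n)\bigr]$. The key observation is that for $|n|\geq 1$ and $x,y\in[0,1]$, both $|x-y+2n|$ and $|x+y+2n|$ are at least $2|n|-2$, so each tail summand is bounded by $(4\pi(t-s))^{-1/2}\exp\bigl(-(2|n|-2)^2/(4(t-s))\bigr)$. Summing over $n$ by comparison with a Gaussian tail gives a bound of the form $C_T (t-s)^{-1/2}\exp(-c/(t-s))$ for $t-s \leq T$, which is exponentially negligible compared to the $n=0$ term: indeed, since $|x-y| \leq 1$, this tail bound is absorbed into a constant multiple of $(t-s)^{-1/2}\exp(-|x-y|^2/(2(t-s)))$.

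The main (mild) technical obstacle is ensuring uniformity of the constants in $(t-s,x,y)$: the tail estimate degenerates as $t-s \to \infty$ (which is ruled out by restricting to $t-s \leq T$ implicit in the application), while for small $t-s$ the tail is super-exponentially small and the $n=0$ term dominates. Assembling the two regimes into a single constant $C$ yields the claim.
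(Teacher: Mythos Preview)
The paper does not supply a proof of this lemma; it simply cites Bally--Pardoux, so there is no argument to compare against. Your method-of-images approach is the standard one and is essentially correct, but two points deserve attention.

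First, your tail bound has a gap at $|n|=1$. The inequality $|x\pm y+2n|\ge 2|n|-2$ gives only $\ge 0$ when $|n|=1$, so those four terms are \emph{not} controlled by $(4\pi(t-s))^{-1/2}\exp(-c/(t-s))$ as you claim; they are of the same order as the $n=0$ term. The fix is easy: show directly that each image is at distance at least $|x-y|$ from $x$. For instance, $|x+y-2|=2-(x+y)\ge |x-y|$ since $(1-x)(1-y)\ge 0$, and $|x\pm y+2n|\ge 2|n|-1\ge 1\ge |x-y|$ for the remaining $|n|=1$ terms. Thus the $|n|=1$ contribution is bounded by a constant multiple of $p_{t-s}(x-y)$ and can be absorbed into the main term, while the $|n|\ge 2$ tail is genuinely exponentially small.

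Second, note a discrepancy in the exponential rate. Your leading term $p_{t-s}(x-y)$ carries $\exp(-|x-y|^2/(4(t-s)))$, whereas the lemma as stated has $\exp(-|x-y|^2/(2(t-s)))$; the latter decays faster, so the former is \emph{not} bounded by a constant multiple of it uniformly. This is almost certainly a typo in the lemma (indeed, in Section~6.4.2 the paper itself writes $p_t(x,y)=(4\pi t)^{-1/2}e^{-(x-y)^2/(4t)}$ and $G_t=p_t+H_t$), and the applications in the paper only use the qualitative Gaussian shape, so the issue is harmless. But you should flag that your argument yields the bound with $4(t-s)$ in the exponent, which is the sharp one. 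Your observation that an implicit restriction $t-s\le T$ is needed is also correct: as $t-s\to\infty$, $G_{t-s}(x,y)\to 1$ while the right-hand side tends to $0$.
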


\begin{lem}\label{(A.3)} \textnormal{\cite[(A.3)]{Bally:98}}
There exists $C>0$ such that for any $t\geq\epsilon>0$ and $x \in [0,1]$,
\begin{equation*}
\int_{t-\epsilon}^t \int_{x-\sqrt{\epsilon}}^{x+\sqrt{\epsilon}} G^2_{t-s}(x,y) dy ds \geq C \sqrt{\epsilon}.
\end{equation*}
\end{lem}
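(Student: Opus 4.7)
The plan is to bound $G_t^2$ from below by a pure Gaussian via the method-of-images representation of the Neumann heat kernel, then carry out an explicit rescaled calculation. The one mild subtlety is the possibility that $x$ lies near an endpoint of $[0,1]$, which I handle by symmetry.

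The Neumann Green kernel on $[0,1]$ admits the representation
\begin{equation*}
G_t(x,y)=\sum_{n\in\mathbb{Z}}\bigl[p_t(x-y-2n)+p_t(x+y-2n)\bigr],
\end{equation*}
where $p_t(z)=(4\pi t)^{-1/2}\exp(-z^2/(4t))$. All summands are non-negative, so $G_t^2(x,y)\ge(4\pi t)^{-1}\exp(-(x-y)^2/(2t))$. After the substitution $u=t-s$ it therefore suffices to show
\begin{equation*}
\int_0^\epsilon\frac{du}{4\pi u}\int_{I_x}\exp\!\Bigl(-\frac{(x-y)^2}{2u}\Bigr)\,dy\ \ge\ C\sqrt{\epsilon},\qquad I_x:=[x-\sqrt{\epsilon},x+\sqrt{\epsilon}]\cap[0,1].
\end{equation*}

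Since $\max(x,1-x)\ge 1/2$ and $G_t(x,y)=G_t(1-x,1-y)$ (a reflection symmetry visible from the image sum above), I may assume without loss of generality that $1-x\ge 1/2$; then $[x,\,x+\min(\sqrt{\epsilon},1/2)]\subseteq I_x$. For $u\in(0,\min(\epsilon,1/4)]$ the upper limit in the change of variable $w=(y-x)/\sqrt u$ is at least $1$, so
\begin{equation*}
\int_x^{x+\min(\sqrt{\epsilon},1/2)}\exp\!\Bigl(-\frac{(y-x)^2}{2u}\Bigr)\,dy=\sqrt u\int_0^{\min(\sqrt{\epsilon},1/2)/\sqrt u}e^{-w^2/2}\,dw\ \ge\ c_0\sqrt u,
\end{equation*}
with $c_0:=\int_0^1 e^{-w^2/2}\,dw>0$. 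Integrating in $u$ yields $\frac{c_0}{2\pi}\sqrt{\min(\epsilon,1/4)}$, which is $\ge (c_0/(4\pi))\sqrt\epsilon$ for every $\epsilon\le 1$.

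For $\epsilon\ge 1$ a separate simple argument applies: mass conservation $\int_0^1 G_t(x,y)\,dy=1$ combined with the Cauchy--Schwarz inequality gives $\int_0^1 G_t^2(x,y)\,dy\ge 1$, and since $[x-\sqrt{\epsilon},x+\sqrt{\epsilon}]\supseteq[0,1]$ in this range, the full double integral is at least $\epsilon\ge\sqrt\epsilon$. The only genuinely substantive step is the choice of one-sided subinterval in the previous paragraph, which simultaneously neutralizes boundary values of $x$ and matches the rescaling $w=(y-x)/\sqrt u$ so that the Gaussian integral is bounded below uniformly.
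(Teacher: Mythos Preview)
The paper does not supply its own proof of this lemma; it is stated with a bare citation to \cite[(A.3)]{Bally:98} and no argument. So there is nothing in the paper to compare your proposal against.

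Your argument is correct and self-contained. The lower bound $G_t(x,y)\ge p_t(x-y)$ from positivity of all image terms is the natural starting point, and your use of the reflection symmetry $G_t(x,y)=G_t(1-x,1-y)$ to reduce to $x\le 1/2$, followed by the one-sided interval $[x,x+\min(\sqrt\epsilon,1/2)]$, cleanly handles the boundary case that would otherwise cause the integration window to fall partly outside $[0,1]$. The separate treatment of $\epsilon\ge 1$ via mass conservation and Cauchy--Schwarz is an efficient way to dispose of the large-$\epsilon$ regime (which in fact never arises in the paper's applications, where $\epsilon$ is always small, but the lemma as stated allows it).
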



\begin{lem} \textnormal{\cite[(A.5)]{Bally:98}} \label{(A.5)}
There exists $C>0$ such that for any $\epsilon>0$,
$q<\frac{3}{2}$, $t\geq \epsilon$ and $x \in [0,1]$,
\begin{equation*}
\int_{t-\epsilon}^t \int_0^1 G^{2q}_{t-s}(x,y) dy ds \leq C \epsilon^{3/2-q}.
\end{equation*}
\end{lem}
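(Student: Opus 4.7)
The plan is to reduce the $L^{2q}$ bound on $G_{t-s}(x,\cdot)$ to a standard Gaussian estimate by invoking Lemma \ref{(A.1)}, which provides the pointwise upper bound $G_{t-s}(x,y) \leq C (t-s)^{-1/2} \exp(-|x-y|^2/(2(t-s)))$. Raising both sides to the $2q$-th power gives
\begin{equation*}
G^{2q}_{t-s}(x,y) \leq \frac{C^{2q}}{(t-s)^{q}} \exp\!\left(-\frac{q|x-y|^2}{t-s}\right),
\end{equation*}
which reduces the problem to controlling a Gaussian-type double integral.

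Next I would estimate the spatial integral by extending the domain of integration from $[0,1]$ to all of $\R$ (this only enlarges the integral). Applying the substitution $z = (x-y)/\sqrt{t-s}$ in the inner integral yields
\begin{equation*}
\int_0^1 G^{2q}_{t-s}(x,y)\, dy \leq \frac{C^{2q}}{(t-s)^{q}} \int_{\R} \exp\!\left(-q z^2\right) (t-s)^{1/2}\, dz = C_q\, (t-s)^{1/2 - q},
\end{equation*}
where $C_q$ absorbs the Gaussian normalizing constant $(\pi/q)^{1/2}$ and depends only on $q$.

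Finally, I would integrate in time over $s \in [t-\epsilon, t]$ using the substitution $u = t-s$:
\begin{equation*}
\int_{t-\epsilon}^t (t-s)^{1/2 - q}\, ds = \int_0^\epsilon u^{1/2 - q}\, du = \frac{\epsilon^{3/2 - q}}{3/2 - q}.
\end{equation*}
This integral is finite precisely when $1/2 - q > -1$, i.e., $q < 3/2$, which is the hypothesis. Combining the two steps yields the desired estimate with $C = C_q / (3/2 - q)$.

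The proof is entirely routine once Lemma \ref{(A.1)} is in hand; there is no real obstacle. The only point worth noting is that the exponent $3/2 - q$ and the integrability threshold $q < 3/2$ both arise naturally from the scaling of the heat kernel: the spatial integral contributes a factor $(t-s)^{1/2}$ of ``dimension,'' which combined with the $(t-s)^{-q}$ singularity from the pointwise bound produces the borderline power $u^{1/2-q}$ in the time integral.
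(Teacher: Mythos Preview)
Your proof is correct and entirely standard. The paper does not supply its own proof of this lemma, merely citing \cite[(A.5)]{Bally:98}; however, your argument is exactly the method the paper uses to prove the closely related Lemma~\ref{int:G^2} (invoke the Gaussian bound of Lemma~\ref{(A.1)}, extend the spatial integral to $\R$, and scale), so there is nothing to add. One small remark: as you implicitly observe, the resulting constant $C = C_q/(3/2-q)$ necessarily depends on $q$ and blows up as $q\uparrow 3/2$; the statement's quantifier ordering is slightly loose in this respect, but in the paper the lemma is only applied with $q=1$.
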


\begin{lem}\label{int:G^2}
    There exists $C>0$ such that for all
    $0<a<b$ and $x \in [0,1]$,
    \begin{equation*}
        \int_a^b
        \int_{0}^1 G^2_s(x,y) \, dy ds \le C\frac{b-a}{
        \sqrt{b} + \sqrt{a}}.
    \end{equation*}
\end{lem}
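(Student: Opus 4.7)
The plan is to use the semigroup property of the Neumann heat kernel together with the standard on-diagonal bound, and then perform a one-variable integration. Specifically, by (\ref{square}) (i.e.\ the semigroup identity $\int_0^1 G_s(x,v) G_s(x,v)\, dv = G_{2s}(x,x)$ stated in Walsh~\cite{Walsh:86}), the inner integral reduces to a single evaluation of the Green kernel on the diagonal:
\begin{equation*}
    \int_0^1 G_s^2(x,y)\, dy = G_{2s}(x,x).
\end{equation*}

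Next, I would invoke the pointwise upper bound $G_t(x,y) \le C\, p_t(x,y) + H_t(x,y)$ used in the proof of Proposition \ref{large} (and coming from \cite[p.\,318]{Walsh:86}), where $H_t$ is uniformly bounded on $(0,\infty)\times(0,1)^2$. Applied on the diagonal, this yields $G_{2s}(x,x) \le C/\sqrt{s}$ uniformly over $x \in [0,1]$ and $s > 0$ (the Gaussian part contributes $(4\pi\cdot 2s)^{-1/2}$, and the bounded remainder is absorbed into the constant once we restrict to the regime where $1/\sqrt{s}$ dominates; on bounded time intervals one can always enlarge $C$ to handle large $s$).

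The final step is the one-dimensional computation
\begin{equation*}
    \int_a^b G_{2s}(x,x)\, ds \le C \int_a^b \frac{ds}{\sqrt{s}} = 2C\bigl(\sqrt{b}-\sqrt{a}\bigr) = 2C\,\frac{b-a}{\sqrt{b}+\sqrt{a}},
\end{equation*}
using the elementary identity $\sqrt{b}-\sqrt{a} = (b-a)/(\sqrt{b}+\sqrt{a})$. This is exactly the asserted bound.

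There is no real obstacle here; the only subtlety is recognizing which form of the elementary identity $\sqrt b-\sqrt a$ to use so that the answer comes out with the $\sqrt b + \sqrt a$ in the denominator (as needed, for example, in the proof of Lemma \ref{lem:B_2^1} where $a$ and $b$ have the specific form $s-\e$ and $s$, so that $\sqrt b + \sqrt a$ is comparable to $\sqrt{t-s+\e}$). The semigroup reduction and the diagonal heat kernel bound are both standard facts about the Neumann Green function that have already been used earlier in the paper.
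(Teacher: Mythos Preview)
Your argument is correct. The paper's proof takes a slightly different first step: rather than invoking the semigroup identity to collapse the $y$-integral to $G_{2s}(x,x)$, it applies the Gaussian pointwise bound of Lemma~\ref{(A.1)} to $G_s(x,y)$ directly, squares it, extends the $y$-integral to all of $\R$, and computes it via the substitution $z=(x-y)/\sqrt{s}$. Both routes arrive at the same inequality $\int_0^1 G_s^2(x,y)\,dy \le C/\sqrt{s}$ and the same one-variable computation $\int_a^b s^{-1/2}\,ds = 2(\sqrt b-\sqrt a)$. Your semigroup reduction is a clean alternative; the paper's route is marginally more self-contained in that Lemma~\ref{(A.1)} already gives a bound of the right shape without needing to split off the bounded remainder $H_t$ and argue separately that it can be absorbed on bounded time intervals.
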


\begin{proof}
Using Lemma \ref{(A.1)} and the change of variables
$z=\frac{x-y}{\sqrt{s}}$, we see that
\begin{equation*}\begin{split}
\int_a^b \int_{0}^1 G^2_s(x,y) \, dy ds & \le C \int_a^b
\int_{-\infty}^{\infty} \frac{1}{\sqrt{s}} e^{-z^2} \, dz ds \\
&=\tilde{C} \int_a^b \frac{1}{\sqrt{s}} \, ds = 2 \tilde{C}
(\sqrt{b}-\sqrt{a}),
\end{split}\end{equation*}
which concludes the proof.
\end{proof}
\vskip 12pt

The next result is a straightforward extension to $d\ge 1$ of \textsc{Morien} \cite[Lemma 4.2]{Morien:99} for $d=1$.
\begin{lem} \label{morien}
Assume ${\bf P1}$.
    For all $q\ge 1$, $T>0$ there exists $C>0$
    such that for all $T \ge t\ge s\ge \e>0$ and $0\le y\le 1$,
    \begin{equation*}
        \sum_{k=1}^d \E\left[ \left(
        \int_{s-\e}^s dr \int_0^1 dv\, \left|
        \sum_{i=1}^d D_{r,v}^{(k)} \left( u_i(t,y) \right) \right|^2
        \right)^q \right] \le C \e^{q/2}.
    \end{equation*}
\end{lem}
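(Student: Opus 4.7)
The plan is to decompose $D_{r,v}^{(k)}(u_i(t,y))$ via the integral equation \eqref{deri} and estimate the deterministic Green--kernel part and the random remainder $a_i(k,r,v,t,y)$ separately. By \eqref{deri},
\begin{equation*}
    \sum_{i=1}^d D_{r,v}^{(k)}(u_i(t,y))
    = G_{t-r}(y,v)\sum_{i=1}^d \sigma_{ik}(u(r,v))
    + \sum_{i=1}^d a_i(k,r,v,t,y),
\end{equation*}
so by $(a+b)^2\le 2a^2+2b^2$ and the Cauchy--Schwarz inequality,
\begin{equation*}
    \Big|\sum_{i=1}^d D_{r,v}^{(k)}(u_i(t,y))\Big|^2
    \le c\, G^2_{t-r}(y,v) + 2d\sum_{i=1}^d a_i^2(k,r,v,t,y),
\end{equation*}
where we have used hypothesis {\bf P1} to bound $\sigma_{ik}$. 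Integrating over $(r,v)\in[s-\e,s]\times[0,1]$ and using $(A+B)^q\le 2^{q-1}(A^q+B^q)$, it suffices to bound
\begin{equation*}
    T_1:=\left(\int_{s-\e}^s dr\int_0^1 dv\, G^2_{t-r}(y,v)\right)^q
    \quad\text{and}\quad
    T_2:=\E\!\left[\left(\int_{s-\e}^s dr\int_0^1 dv \sum_{i=1}^d a_i^2(k,r,v,t,y)\right)^q\right].
\end{equation*}

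First I would handle $T_1$ deterministically: after the substitution $r\mapsto t-r$ the integral becomes $\int_{t-s}^{t-s+\e}\int_0^1 G^2_r(y,v)\,dv\,dr$, and Lemma \ref{int:G^2} (applied with $a=t-s\ge 0$, $b=t-s+\e$) gives
\begin{equation*}
    \int_{s-\e}^s\int_0^1 G^2_{t-r}(y,v)\,dv\,dr
    \le C\,\frac{\e}{\sqrt{t-s+\e}+\sqrt{t-s}}
    \le C\,\frac{\e}{\sqrt{\e}}
    = C\sqrt{\e},
\end{equation*}
so $T_1\le C\e^{q/2}$. Next, for $T_2$, Lemma \ref{lem:7.7} yields directly
\begin{equation*}
    T_2 \le c(q,T)(t-s+\e)^{q/2}\e^{q/2}
    \le c(q,T)(T+1)^{q/2}\e^{q/2}.
\end{equation*}
Summing over $k\in\{1,\dots,d\}$ and combining the two bounds gives the desired estimate.

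There is no genuine obstacle here, since the two ingredients (Lemma \ref{int:G^2} and Lemma \ref{lem:7.7}) have already been established; the only point that merits care is the verification that $\int_{s-\e}^s\int_0^1 G^2_{t-r}(y,v)\,dv\,dr\le C\sqrt{\e}$ holds \emph{uniformly} in $0\le t-s\le T$, which is what Lemma \ref{int:G^2} provides through the denominator $\sqrt{t-s+\e}+\sqrt{t-s}\ge\sqrt{\e}$.
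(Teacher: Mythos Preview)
Your argument is circular. You invoke Lemma \ref{lem:7.7} to bound the remainder term $T_2$, but if you look at the proof of Lemma \ref{lem:7.7} in the paper, it explicitly relies on Lemma \ref{morien} (the very statement you are trying to prove) at the step where $\sum_{k=1}^d \E[|\int_{s-\e}^{s\wedge\theta}\!\int_0^1 \Upsilon^2|^q]$ is bounded by $c\e^{q/2}$. So Lemma \ref{lem:7.7} is a \emph{consequence} of Lemma \ref{morien}, not an input to it.

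The paper does not give a self-contained proof either; it simply cites \textsc{Morien} \cite[Lemma 4.2]{Morien:99}, where the $d=1$ case is established. The underlying argument there is a Gronwall-type iteration: one writes the equation for $D^{(k)}_{r,v}(u_i(t,y))$ as in \eqref{deri}, squares and integrates over $(r,v)\in[s-\e,s]\times[0,1]$, and then the $a_i$ terms produce integrals of the same quantity $\int_{s-\e}^{s}\!\int_0^1 |D^{(k)}_{r,v}(u_l(\theta,\eta))|^2\,dr\,dv$ at intermediate times $\theta\le t$, weighted by $G^2_{t-\theta}(y,\eta)$. Setting $\phi(t):=\sup_{\eta}\sum_k\E[(\int_{s-\e}^{s}\!\int_0^1 |\sum_i D^{(k)}_{r,v}(u_i(t,\eta))|^2\,dr\,dv)^q]$ (or an appropriate variant), one obtains an inequality of the form $\phi(t)\le C\e^{q/2}+C\int_{s-\e}^t (t-\theta)^{-1/2}\phi(\theta)\,d\theta$, and a Gronwall lemma with a singular kernel closes the estimate. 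Your treatment of $T_1$ via Lemma \ref{int:G^2} is fine and is exactly the source of the $\e^{q/2}$; what is missing is an independent, non-circular argument for $T_2$.
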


The next result is Burkholder's inequality for Hilbert-space-valued martingales.
\begin{lem} \label{valuedm} \textnormal{\cite[eq.(4.18)]{Bally:98}}
Let $H_{s,t}$ be a predictable $L^2(([0,t] \times [0,1])^m)$-valued process, $m \geq 1$. Then, for any $p>1$,
there exists $C>0$ such that
\begin{equation*}\begin{split}
&\E \biggl[ \bigg\vert \int_{([0,t] \times [0,1])^m} \biggl( \int_0^t \int_0^1 H_{s,y}(\alpha) W(dy, ds)\biggr)^2
d \alpha \bigg\vert^p \biggr] \\
&\qquad \qquad\leq C
 \E \biggl[ \bigg\vert   \int_0^t \int_0^1 \biggl( \int_{([0,t] \times [0,1])^m} H_{s,y}^2(\alpha) d \alpha \biggr)
dy ds \bigg\vert^p \biggr].
\end{split}\end{equation*}
\end{lem}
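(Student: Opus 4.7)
The inequality is the Hilbert-space-valued Burkholder--Davis--Gundy bound. Setting $H := L^2(([0,T]\times[0,1])^m, d\alpha)$, the plan is to view $(s,y)\mapsto H_{s,y}(\cdot)$ as an $H$-valued predictable process and to define the $H$-valued stochastic integral and its tensor quadratic variation
\[
M_t := \int_0^t\!\int_0^1 H_{s,y}(\cdot)\, W(dy,ds), \qquad Q_t := \int_0^t\!\int_0^1 \Vert H_{s,y}\Vert_H^2\, dy\, ds,
\]
so that the left-hand side of the stated inequality is $\E[\Vert M_t\Vert_H^{2p}]$ and the right-hand side is a constant times $\E[Q_t^p]$. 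The task thus reduces to establishing $\E[\Vert M_t\Vert_H^{2p}]\leq C_p\,\E[Q_t^p]$.

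I would prove this first for finite-rank integrands and then pass to the limit. Writing $H_{s,y}(\alpha) = \sum_{i=1}^N a_i(s,y)\,e_i(\alpha)$ for an orthonormal family $(e_i)$ in $H$ and real predictable $(a_i)$, set $N_t^i := \int_0^t\!\int_0^1 a_i(s,y)\,W(dy,ds)$. Then $M_t = \sum_i N_t^i\, e_i$ with $\Vert M_t\Vert_H^2 = \sum_i (N_t^i)^2$, the cross-variations are $d\langle N^i, N^j\rangle_s = a_i(s,y) a_j(s,y)\,dy\,ds$, and $Q_t = \sum_i \langle N^i\rangle_t$. Because $p>1$, the function $\Phi(x) = \Vert x\Vert^{2p}$ is $C^2$ on $\R^N$, and Itô's formula applied to $\Phi(N_t^1,\ldots,N_t^N)$ yields
\[
\Vert M_t\Vert_H^{2p} = \mathscr{M}_t + p\int_0^t \Vert M_s\Vert_H^{2p-2}\,dQ_s + 2p(p-1)\int_0^t\!\int_0^1 \Vert M_s\Vert_H^{2p-4}\Big(\sum_i N_s^i a_i(s,y)\Big)^{2} dy\,ds,
\]
where $\mathscr{M}_t$ is a local martingale. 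Cauchy--Schwarz in $\ell^2$ gives $\big(\sum_i N_s^i a_i(s,y)\big)^2 \leq \Vert M_s\Vert_H^2\cdot \Vert H_{s,y}\Vert_H^2$, so the last drift term is dominated by $\int_0^t \Vert M_s\Vert_H^{2p-2}\,dQ_s$.

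A standard localization turns $\mathscr{M}_t$ into a true martingale, and taking expectations yields $\E[\Vert M_t\Vert_H^{2p}] \leq c_p\,\E\big[\sup_{s\leq t}\Vert M_s\Vert_H^{2p-2}\cdot Q_t\big]$. Applying H\"older's inequality with exponents $p/(p-1)$ and $p$, then Doob's $L^p$-maximal inequality to the nonnegative submartingale $\Vert M\Vert_H^2$ (so that $\E[\sup_s \Vert M_s\Vert_H^{2p}]\leq c_p\,\E[\Vert M_t\Vert_H^{2p}]$), and rearranging gives $\E[\Vert M_t\Vert_H^{2p}] \leq C_p\,\E[Q_t^p]$ for finite-rank $H$. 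I would then pass to general $H$ by approximating $(s,y)\mapsto H_{s,y}(\cdot)$ in $L^2(\Omega;L^2([0,T]\times[0,1];H))$ by finite-rank predictable simple processes and invoking Fatou's lemma on the left-hand side together with continuity of $Q_t$ on the right. The only real obstacle is technical rather than conceptual: one must carefully set up the $H$-valued stochastic integral against Walsh's space--time white noise and justify Itô's formula in that context, but once the finite-rank reduction is made the argument reduces to finite-dimensional Itô calculus applied to $(N_t^1,\ldots,N_t^N)$, so this is routine.
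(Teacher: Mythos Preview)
The paper does not prove this lemma: it is stated as a citation of \cite[eq.(4.18)]{Bally:98} and used as a black box. Your proposal therefore goes beyond what the paper does, supplying a self-contained argument.

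Your proof is the standard route to the Hilbert-space-valued Burkholder--Davis--Gundy inequality and is correct. A couple of small points worth tightening: (i) for $1<p<2$ the function $\Phi(x)=\|x\|^{2p}$ has second partials involving $\|x\|^{2p-4}x_ix_j$, which is only $C^0$ at the origin because $|\|x\|^{2p-4}x_ix_j|\le\|x\|^{2p-2}\to 0$; it is worth saying explicitly that $\Phi\in C^2(\R^N)$ for all $p>1$ so that It\^o's formula applies without regularization. (ii) In the step where you divide through by $(\E[\|M_t\|_H^{2p}])^{(p-1)/p}$, you need this quantity to be finite and nonzero; the localization handles finiteness (work with stopping times $\tau_n$, obtain the bound for $M_{t\wedge\tau_n}$, then let $n\to\infty$ via Fatou), and the trivial case $\E[\|M_t\|_H^{2p}]=0$ is immediate. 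With these two remarks made explicit, the argument is complete.
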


\vskip 12pt \noindent {\bf Acknowledgement}. The authors thank V.~Bally for several stimulating discussions.

\end{document}